%
%
%

\documentclass[fleqn]{article}

\usepackage{./myCommands}
\usepackage{amsthm}
\usepackage{amscd}
\usepackage{tikz-cd}
\newlength{\bibitemsep}\setlength{\bibitemsep}{0pt plus 0.3ex}
\newlength{\bibparskip}\setlength{\bibparskip}{0pt}
\let\oldthebibliography\thebibliography
\renewcommand\thebibliography[1]{%
  \oldthebibliography{#1}%
  \setlength{\parskip}{\bibitemsep}%
  \setlength{\itemsep}{\bibparskip}%
}

%
\usepackage{hyperref} 
\usepackage[left=1.5in, top=1.0in, right=1.5in, bottom=1.0in]{geometry}

\pdfpagewidth 8.5in
\pdfpageheight 11in

\togglefalse{colcom}


\newcommand{\singlespacing}{%
  \let\CS=\small\renewcommand{\baselinestretch}{1.0}\CS}
\newcommand{\doublespacing}{%
  \let\CS=\small\renewcommand{\baselinestretch}{1.6}\CS}

\newtheorem{thm}{Theorem}[subsection]
\newtheorem{lem}[thm]{Lemma}

\newtheorem{cor}[thm]{Corollary}

\theoremstyle{definition}
\newtheorem{defn}[thm]{Definition}

\newtheorem{eg}{Example}[section]

\newtheorem{assumption}{Assumption}[section]

\theoremstyle{remark}
\newtheorem{rem}[thm]{Remark}


\numberwithin{equation}{section}



\newcommand{\ev}{\mathcal{E}}

\newcommand{\wY}{\widehat{Y}}
\newcommand{\wh}[1]{\widehat{#1}}

\newcommand{\wEps}{\widetilde{\epsilon}}

\newcommand{\decomp}{\mathcal{P}}







\newcommand{\tr}{\,\mathrm{tr}\,}


\newcommand{\Cov}{\mathrm{Cov}}

\newcommand{\edge}[1]{\set{#1}}

\begin{document}
\author{Michael P. Casey}
\title{Bulk Johnson-Lindenstrauss Lemmas}
\maketitle

\begin{abstract}
For a set $X$ of $N$ points in $\R^D$,
the Johnson-Lindenstrauss lemma provides random linear maps that approximately preserve all pairwise distances in $X$ -- up to multiplicative error $(1\pm \epsilon)$ with high probability -- using a target dimension of $O(\epsilon^{-2}\log(N))$. 
Certain known point sets actually require a target dimension this large -- any smaller dimension forces at least one distance to be stretched or compressed too much. 
What happens to the remaining distances? 
If we only allow a fraction $\eta$ of the distances to be distorted beyond tolerance $(1\pm \epsilon)$, 
we show a target dimension of $O(\epsilon^{-2}\log(4e/\eta)\log(N)/R)$ is sufficient for the remaining distances.
With the stable rank of a matrix $A$ as $\norm{A}_F^2/\norm{A}^2$, 
the parameter $R$ is the minimal stable rank over certain $\log(N)$ sized subsets of $X-X$ or their unit normalized versions, involving each point of $X$ exactly once. 
The linear maps may be taken as random matrices with $\iid$ zero-mean unit-variance sub-gaussian entries. 
When the data is sampled $\iid$ as a given random vector $\xi$, refined statements are provided; the most improvement happens when $\xi$ or the unit normalized $\wh{\xi-\xi'}$ is isotropic, with $\xi'$ an independent copy of $\xi$, and includes the case of $\iid$ coordinates. 
\end{abstract}

\begin{itemize}
\item e-mail: mpcasey@alumni.duke.edu
\item MSC: primary 68Q87, 68R12, 60B20; secondary 62G30, 68T09 
\item keywords: dimension reduction, Johnson-Lindenstrauss lemma, Hanson-Wright inequality, stable rank, effective rank, intrinsic dimension, order statistics, Walecki construction, bulk, batch, minibatch, random projection
\end{itemize}


\section{Introduction}
The Johnson-Lindenstrauss lemma~\cite{JohnsonLindenstrauss1984} concerns the approximate preservation of distances in a finite point set in Euclidean space. 
Specifically, for a subset $X\subset\R^D$ of $N$ points and a tolerance $\epsilon\in(0,1)$, there exist $k\times D$ matrices $Z$ and a constant $\gamma(\epsilon)$ for which 
\begin{equation}\label{eqn:JLUnsquaredDistancesPairwise}
(1-\epsilon)\norm{x-x'}_2\leq \sqrt{\frac{\gamma(\epsilon)}{k}}\norm{Z(x-x')}_2 \leq (1+\epsilon)\norm{x-x'}_2\tag{JL}
\end{equation}
holds \emph{for all} pairs of points $x,x'\in X$ simultaneously, with probability at least $1-\delta$, provided 
\[
k=O(\epsilon^{-2}\log(N^2/\delta))=:D_{JL}(N)=:D_{JL}. 
\]
The matrices $Z$ are drawn randomly, and much work has been done since the original paper to equip $Z$ with special properties, such as allowing fast matrix multiplication, preserving sparsity, restricting the matrix entries to discrete distributions, and so forth; see~\cite{NelsonReview2020} for a recent review. 
The matrix $Z$ provides a linear method for dimension reduction, which, at the very least, reduces the amount of space needed to store the dataset $X$ on the computer, provided one can work with approximate versions of the pairwise distances. 
One would expect that \qt{robust} downstream algorithms that depend on distance data, now working on the pointset $ZX$, should still return answers that approximate those found on the original pointset $X$, but now in shorter time, with less memory needed, etc. 
People appreciate this lemma, in theory~\cite{VempalaRandomProjection2004}, for the above reasons, and if the algorithm scales linearly in the ambient dimension, in time or in space, then processing $ZX$ instead of $X$ will yield a proportional improvement. 

However, certain algorithms, including many nearest-neighbor algorithms, scale exponentially in the dimension, especially if they only use space linear in $N$, due to packing arguments. 
For comparison to brute force, the all pairs nearest-neighbor problem may already be solved in $O(DN^2)$ time by scanning the points of $X$ with respect to their distance to each query $x$. 
If the algorithm scales like $DN\exp(D)$, this scaling is only an improvement for $D=\log(N)$, and even then one would really prefer $D=o(\log N)$, as $N^2$ is too expensive when $N$ is large. 
Consequently, if we use multiplication by $Z$ as a preprocessing step, the target dimension $D_{JL}=O(\epsilon^{-2}\log(N^2))$ is much too large to be practical, as $\exp(D_{JL})$ is now polynomial in $N$ with exponent at least $2/\epsilon^2$ with $\epsilon<1$. 

Apart from searching for better algorithms, it is then natural to ask whether there are matrices $Z$ with target dimension $k$ much smaller than $D_{JL}$ that would still satisfy equation~\eqref{eqn:JLUnsquaredDistancesPairwise} \emph{for all} pairs of points of $X$. 
However, Larsen and Nelson~\cite{LarsenNelsonOptimal2014} showed no such matrix exists for general datasets -- a union of the standard simplex and a sufficient number of Gaussian vectors forces at least one distance to be stretched or compressed too much. 
On the other hand, if further assumptions are made on the pointset, smaller $k$ is possible, for instance, when $X$ already lies in a low-dimensional subspace~\cite{SarlosImproved2006} or when its unit difference set $\wh{X-X}$ has low Gaussian complexity~\cite{KlartagMendelson2005}, even while allowing many more points in the dataset. 

If one considers a given algorithm \qt{robust}, one would hope that failing to preserve a few distances should not markedly change the final output; though, one would ideally have to prove such behavior for that algorithm. 
One is then led to ask: what happens to the other distances between the points of $ZX$ when $k$ is smaller than $D_{JL}$?
To be concrete, can we approximately preserve $(1-\eta)$ of all the distances, for some fraction $\eta\in(0, 1/2)$, ideally with $k=o(D_{JL}(N))$? 
The results in this paper show this is possible when $\eta$ is not too small and the data has high or even moderate \qt{intrinsic dimension}, in a sense to be defined later. 
As a preview, for data sampled $\iid$ like a random vector $\xi\in \R^D$, corollary~\ref{cor:ArbitraryIIDCoordsBulkJL} shows that if $\xi$ has $\iid$ coordinates (No moment assumptions are made.), we can take 
\[
k=\frac{C'}{\epsilon^2}\left(\log(4e/\eta)\log(6De/\zeta)+\frac{\log(N^2/\delta)}{\eta D}\right)
\]
for preserving $(1-\eta)(1-\zeta)$ of all pairwise distances, with probability at least $1-2\delta$ over the draw for $Z$ and $X$, provided $N=\Omega(\zeta^{-1}D\log(N/\delta))$ and $N=\Omega(D\log(6De/\zeta))$. 
The matrix $Z$ may have $\iid$ standard Gaussian, or in general, zero-mean unit-variance sub-gausssian coordinates. 
This estimate for $k$ is an \qt{improvement} over the $D_{JL}$ target dimension as soon as $\eta D=\omega(1)$ or say a fractional power of $\log(N)$; \qt{improvement} must be in quotes, as we have only guaranteed \qt{most} distances are approximately preserved, not all. 
For general $\xi$, the $1/\eta D$ is replaced by a $1/\eta \wh{r}$, with $\wh{r}$ the \emph{intrinsic dimension} $1/\norm{\E\wh{y}\wh{y}^\top}$ of the unit normalized random vector $\wh{y}=\wh{\xi-\xi'}$ for an independent copy $\xi'$ of $\xi$. 
We have $1\leq \wh{r}\leq D$ always, and we may estimate it using corollary~\ref{cor:EstimatingRHat}. 

Both of our main results -- theorems~\ref{thm:FreeDecompBulkJL} for general datasets and theorem~\ref{thm:ArbitraryOnUnitSphereBulkJL} for $\iid$ samples -- rely on a dual viewpoint for the dimension reduction problem, namely, instead of asking how $Z$ transforms the data $X$, we ask how $X^\top$ transforms the \emph{test matrix} $Z^\top$; we can then exploit known properties of how general matrices act on $Z^\top$ or its columns. 
Standard results like the Hanson-Wright inequality may be viewed in this light, and we indeed do so in this paper. 
Treating $Z^\top$ as a test matrix with known properties has been done previously in randomized numerical linear algebra~\cite{HalkoMartinssonTropp2010}; however, unlike there, slow decay in singular values is actually a good case for us. 

The rest of the paper is organized as follows. 
The main argument allowing us to quantify how many distances can be preserved is in section~\ref{sec:OrderStatControl} and leads to our first theorem~\ref{thm:FreeDecompBulkJL}, which, by itself, only suggests smaller target dimensions $k$ may be possible by considering \qt{small} batches of difference vectors. 
We then recall the Walecki construction in section~\ref{sec:Walecki}, which gives us a way to choose these batches that works well for $\iid$ samples as well as the standard simplex. 
Section~\ref{sec:IID} then presents the rest of our results specializing to data sampled $\iid$ from a given distribution, which may just be a larger dataset. 
This section leads up to our second main theorem~\ref{thm:ArbitraryOnUnitSphereBulkJL} allowing us to make $k$ depend on the intrinsic dimension $\wh{r}$ mentioned above. 
We also show how to estimate $\wh{r}$ from the data. 
The appendix contains proofs for the properties of $Z$ that we use in the paper, namely, a variant of the Hanson-Wright inequality, and a corresponding independent proof in the Gaussian case in order to have decent explicit constants for $k$. 

\subsection{Notation}
Suppose $X\subset \R^D$ is a point set of size $N$. 
Given an arbitrary ordering of the points of $X$, 
let $Y$ be the set of difference vectors
\[
Y:=\set{x_i-x_j\st x_i,x_j\in X,\,0\leq i<j\leq N-1} 
\]
and $\wY$ be their unit normalized versions
\[
\wY:=\set{y/\norm{y}_2\st y\in Y}\subset S^{D-1}. 
\]
The number of elements of a set $\Upsilon$ is denoted by $\abs{\Upsilon}$. 
We set $(x,y)=x^\top y$ for the usual Euclidean inner product, with $\norm{x}_2^2 = (x,x)$. 
We also denote by $o$ the origin $(0,\ldots, 0)$ in any $\R^j$. 

Let $A$ be a $D\times M$ matrix, which we write as $A\in \R^{D\times M}$. 
From~\cite[page~76]{GolubVanLoan2013}, the singular value decomposition (SVD) for $A$ is the factorization $A=U\Sigma V^\top$ with $U\in \R^{D\times D}$ and $V\in \R^{M\times M}$ orthogonal 
and 
\[
\Sigma=\diag(\sigma_1, \ldots, \sigma_p) \qtext{with} p=\min\set{D,\, M}
\]
for $\sigma_1\geq \sigma_2\geq \ldots\geq \sigma_p\geq 0$. 
Let $\vec{\sigma}=(\sigma_1,\ldots, \sigma_p)$ as a vector in $\R^p$. 
We write $\norm{A}=\norm{\vec{\sigma}}_\infty =\sigma_1$ for the operator norm of $A$, while $\norm{A}_F=\norm{\vec{\sigma}}_2$ is the Frobenius (or Hilbert-Schmidt) norm of $A$. 
We may also compute $\norm{A}_F$ via
\[
\norm{A}_F^2 = \sum_i \norm{A_i}_2^2 = \sum_{i,j} A_{ij}^2 
\]
where the $A_i$ may be taken as the rows or the columns of $A$. 
Vectors are treated as column vectors unless otherwise indicated. 

The $\infty$-stable rank $r_\infty(A)$ of a matrix $A$ is the ratio
\[
r_\infty(A):=\frac{\norm{A}_F^2}{\norm{A}^2}
=\frac{\norm{\vec{\sigma}}_2^2}{\norm{\vec{\sigma}}_\infty^2}
=\frac{\sum \sigma_i^2}{\sigma_1^2}. 
\]
There are other variants of stable rank~\cite{NaorYoussefRestricted2017}, but only the $4$-stable rank $r_4(A)$ will make an appearance in this paper:
\[
r_4(A):=\frac{\norm{A}_F^4}{\norm{AA^\top}_F^2}
=\frac{\norm{\vec{\sigma}}_2^4}{\norm{\vec{\sigma}}_4^4}
=\frac{\big(\sum\sigma_i^2\big)^2}{\sum \sigma_i^4}
\geq \frac{\big(\sum\sigma_i^2\big)^2}{\sigma_1^2\sum \sigma_i^2}=r_\infty(A)
\]
So $r_4(A)\geq r_\infty(A)$ always, and both of these are at most $p$, the rank of $A$. 

We make use of the $\psi_2$-norm and $\psi_1$-norm, defined for a random variable $\omega$ as
\[
\norm{\omega}_{\psi_2}:=\inf\set{t>0\st \E \exp(\omega^2/t^2)\leq 2}, 
\qtext{and}
\norm{\omega}_{\psi_1}:=\inf\set{t>0\st \E \exp(\abs{\omega}/t)\leq 2}. 
\]
See~\cite[section~2.5 and 2.7]{VershyninHDP2018}. 

\subsubsection{Isotropic Random Vectors and the Intrinsic Dimension}\label{sec:IsotropicIntrinsic}
A particular condition on the second moment matrix of a random vector will be useful in this paper. 
\begin{defn}
A random vector $\xi\in\R^D$ is \emph{isotropic} if $\Sigma(\xi):=\E\xi\xi^\top =\Id_D$. 
\end{defn}
A working example is any vector $\xi$ with $\iid$ zero-mean unit-variance coordinates. 
Basic properties of isotropic random vectors include $\E \norm{\xi}_2^2=D$ via the cyclic property of the trace, and that $\xi$ isotropic is equivalent to $\E(\xi,y)^2=\norm{y}_2^2$ for any $y\in \R^D$. See~\cite[page~43-5]{VershyninHDP2018} for more on such vectors.

We can assign a version of $\infty$-stable rank to the distribution of a random vector via the \emph{intrinsic dimension}. 
\begin{defn}
The \emph{intrinsic dimension} $r(\xi)$ of a random vector $\xi\in \R^D$ is the ratio \[
r(\xi):=\frac{\tr\Sigma(\xi)}{\norm{\Sigma(\xi)}}
=\frac{\tr{\E\xi\xi^\top}}{\norm{\Sigma(\xi)}}
=\frac{\E\norm{\xi}_2^2}{\norm{\E\xi\xi^\top}}, 
\qtext{and for $c\neq 0$, }
r(c\,\xi)=\frac{c^2\,\E\norm{\xi}_2^2}{c^2\norm{\E\xi\xi^\top}}=r(\xi). 
\]
\end{defn}
Like the stable ranks, the intrinsic dimension of a vector in $\R^D$ is bounded by the ambient dimension, $D$, 
so isotropic random vectors realize the highest possible intrinsic dimension. 
In the literature, $r(\xi)$ is sometimes called the \emph{effective rank} of the second moment matrix $\Sigma(\xi)$, and is the stable rank of the matrix $\Sigma^{1/2}(\xi)$. 

Isotropic random vectors behave well under orthogonal projection. 
\begin{lem}\label{lem:UnitaryPreservesIsotropic}
Let $\Phi$ be a $d\times D$ matrix with orthonormal rows and $\xi\in \R^D$ an isotropic random vector. 
Then $\Phi(\xi)\in \R^d$ is also isotropic. 
\end{lem}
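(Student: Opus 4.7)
The plan is to verify the defining identity $\E(\Phi\xi)(\Phi\xi)^\top = \Id_d$ directly, since the result is essentially a one-line linear-algebra computation combined with the definitions already stated in the excerpt.

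First, I would write $(\Phi\xi)(\Phi\xi)^\top = \Phi\xi\xi^\top\Phi^\top$ and use linearity of expectation (together with the fact that $\Phi$ is deterministic) to pull $\Phi$ and $\Phi^\top$ outside the expectation, obtaining $\E(\Phi\xi)(\Phi\xi)^\top = \Phi\,\E[\xi\xi^\top]\,\Phi^\top$. Next, I would invoke the assumed isotropy of $\xi$, which gives $\E\xi\xi^\top = \Id_D$, reducing the expression to $\Phi\Phi^\top$. Finally, the orthonormality of the rows of $\Phi$ is exactly the statement $\Phi\Phi^\top = \Id_d$, which completes the verification.

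There is essentially no obstacle in this proof; the only thing to be careful about is the direction of orthogonality. The hypothesis is that the rows of $\Phi$ are orthonormal (i.e. $\Phi\Phi^\top=\Id_d$), not the columns. Since $d\leq D$ in the intended use, this is the correct and natural hypothesis, and it is exactly what makes the last step go through. If one had only column orthonormality ($\Phi^\top\Phi=\Id_D$, which forces $d\geq D$), the argument would collapse to the trivial case $d=D$ with $\Phi$ orthogonal. I would also remark, as a sanity check, on the equivalent characterization stated earlier in the excerpt: for any $w\in\R^d$, $\E(\Phi\xi,w)^2 = \E(\xi,\Phi^\top w)^2 = \|\Phi^\top w\|_2^2 = w^\top\Phi\Phi^\top w = \|w\|_2^2$, which independently confirms isotropy of $\Phi\xi$.
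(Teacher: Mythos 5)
Your proof is correct and is essentially identical to the paper's: both compute $\E(\Phi\xi)(\Phi\xi)^\top = \Phi(\E\xi\xi^\top)\Phi^\top = \Phi\Phi^\top = \Id_d$ using linearity of expectation, isotropy, and row-orthonormality. The additional remarks about the direction of orthonormality and the quadratic-form sanity check are fine but not needed.
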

\begin{proof}
By linearity of expectation: 
$\E(\Phi \xi)(\Phi \xi)^\top = \Phi(\E \xi\xi^\top)\Phi^\top =\Phi \Id_D\Phi^\top = \Id_d$. 
\end{proof}
Note if $\Phi$ is scaled by constant $c\neq 0$, the intrinsic dimension is unchanged: $d=r(\Phi(\xi))=r(c\,\Phi(\xi))$. 


\section{Controlling Order Statistics}\label{sec:OrderStatControl}
In this paper, we only study dimension reduction matrices $Z:\R^D\to\R^k$ with isotropic rows, that is, 
$\E(Z_i, y)^2=\norm{y}_2^2$ 
for all $y$ and all rows $Z_i$.
So, $\E\norm{Zy}_2^2 = k\norm{y}_2^2$. 
We say more about isotropic random vectors in section~\ref{sec:IsotropicIntrinsic}. 
To guarantee equation~\eqref{eqn:JLUnsquaredDistancesPairwise} holds for a difference vector $y\in Y$, the usual proof for the Johnson-Lindenstrauss lemma considers each vector individually, providing upper bounds for the failure probabilities
\[
p_+(y):=\Prob_Z\set{\norm{Zy}_2^2  > (1+\wEps_+)k \norm{y}_2^2} 
\]
and 
\[
p_-(y):=\Prob_Z\set{\norm{Zy}_2^2  < (1-\wEps_-)k \norm{y}_2^2}, 
\]
implicitly working with the formulation
\begin{equation}\label{eqn:JLSquaredDistancesPairwise}
(1-\wEps_-)k\norm{x-x'}_2^2
\leq \norm{Z(x-x')}_2^2 
\leq (1+\wEps_+)k\norm{x-x'}_2^2\tag{$JL^2$}
\end{equation}
which is often easier to manage. 
In lemma~\ref{lem:EpsilonAdjustment} of the appendix, we show how to choose $\wEps_\pm$ in line with the original equation~\eqref{eqn:JLUnsquaredDistancesPairwise}. 

\tcr{Give the next lemma here or in the appendix AND mention that we treat $\epsilon$ in the HW proofs as the the appropriate $\widetilde{\epsilon}_\pm$ everywhere?}
\tcb{How bad is it to replace $\epsilon$ by $\wEps_\pm$ everywhere? Will they be eliminated in the theorem statements?}

Suppose $\wEps_-\leq\wEps_+$, as it will for this paper.
If the distribution of $Z$ is sufficiently nice, sub-gaussian for example, then one may show $p_++p_-\leq 2\exp(-\wEps_-^2 k/C)$ for each fixed $y$, with $C$ a constant depending on the distribution of $Z$. 
By the union bound (Boole's inequality), the probability that a random $Z$ fails for some $y\in Y$ is at most 
\[
\binom{N}{2}2\exp(-\wEps_-^2 k/C)\leq N^2\exp(-\wEps_-^2 k/C)<\delta, 
\]
provided $k>C\wEps_-^{-2}\log(N^2/\delta)=D_{JL}$. 
The probability there is some $Z$ that succeeds for all $y\in Y$ is then at least $1-\delta$, so at least one such $Z$ exists. 

The argument above considers the vectors $y\in Y$ one at a time, making sure $Z$ succeeds for each $y$; if we consider the unit normalized vectors $\wh{y}\in \wY$, we may view this argument as controlling the extreme order statistics of the random variables $\norm{Z\wh{y}_i}_2^2$, induced by $Z$, namely 
\[
(1-\wEps_-)k\leq \norm{Z\wh{y}_{(0)}}_2^2 \leq \ldots \leq \norm{Z\wh{y}_{(j)}}_2^2
\leq \ldots \leq \norm{Z\wh{y}_{\big(\binom{N}{2}-1\big)}}_2^2\leq (1+\wEps_+)k. 
\]
If we only wish to preserve a fraction of the distances, say $(1-\eta)$ with $\eta$ hopefully small, we can consider controlling the intermediate or central order statistics of the $\norm{Z\wh{y}_i}_2^2$ instead. 
We do so as follows. 

If we divide the difference vectors into batches of size $M$, and preserve $(1-\eta)M$ distances there, then we still recover 
\[
(1-\eta)M\binom{N}{2}/M=(1-\eta)\binom{N}{2} \qtext{
of the total distances.} 
\]
We assume $\eta M$ is a strictly positive integer here, and for simplicity of discussion, we also assume $M$ divides $N$; we shall return to this point later. 
Let $\Upsilon\subset Y$ be a given batch of size $M$. 
Each given matrix $Z$ also induces an ordering on the points of $\Upsilon$: with $\wh{y}=y/\norm{y}_2$, 
\[
\norm{Z\wh{y}_{(0)}}_2^2\leq\ldots\leq \norm{Z\wh{y}_{(i)}}_2^2
\leq \ldots \leq \norm{Z\wh{y}_{(M-1)}}_2^2. 
\]
As $Z$ is random, this ordering is too, treating $Y$ as fixed. 
If we could guarantee that $\norm{Z\wh{y}_{((1-\eta)M)}}_2^2\leq (1+\epsilon)k$, then \emph{all} $\norm{Z\wh{y}_{(i)}}_2^2$ with $i\leq (1-\eta)M$ also have this upper bound, with an analogous statement for a lower bound of $(1-\epsilon)k$ on $\norm{Z\wh{y}_{(\eta M-1)}}_2^2$, altogether guaranteeing $(1-2\eta)M+2>(1-2\eta)M$ of the vectors of $\Upsilon$ have 
\[
(1-\wEps_-)k\leq \norm{Z\wh{y}}_2^2\leq (1+\wEps_+)k. 
\]

We need to control the probabilities 
\begin{align}\label{eqn:OrderStatEventBatchLevel}
p_+(\Upsilon):&=\Prob_Z\set{\norm{Z\wh{y}_{((1-\eta)M)}}_2^2>(1+\wEps_+)k}
\intertext{and} 
p_-(\Upsilon):&=\Prob_Z\set{\norm{Z\wh{y}_{(\eta M-1)}}_2^2< (1-\wEps_-)k}. 
\end{align}
We can recast control of $p_\pm(\Upsilon)$ using the following lemma. 
Let $\wh{\Upsilon}=\set{\wh{y}_0, \ldots, \wh{y}_{M-1}}$, that is, the unit normalized version of $\Upsilon$. 

\begin{lem}\label{lem:OrderStatViaBlas3}
Let $Z$ be a random $k\times D$ random matrix. With the notation above, and $\Upsilon(\eta)$ running through all $\eta M$-sized subsets of $\Upsilon$ 
\[
p_+(\Upsilon)
\leq \max_{\Upsilon(\eta)}\min_{\Lambda_{\Upsilon(\eta)}}\binom{M}{\eta M}
    \Prob_Z\set{\norm{Z\Upsilon(\eta)\Lambda_{\Upsilon(\eta)}}_F^2>(1+\wEps_+)k\norm{\Upsilon(\eta)\Lambda_{\Upsilon(\eta)}}_F^2}
\]
and
\[
p_-(\Upsilon)
\leq\max_{\Upsilon(\eta)}\min_{\Lambda_{\Upsilon(\eta)}}\binom{M}{\eta M}
    \Prob_Z\set{\norm{Z\Upsilon(\eta)\Lambda_{\Upsilon(\eta)}}_F^2<(1-\wEps_-)k\norm{\Upsilon(\eta)\Lambda_{\Upsilon(\eta)}}_F^2} 
\]
with $\Lambda_{\Upsilon(\eta)}$ a diagonal matrix with strictly positive entries, chosen for each subset $\Upsilon(\eta)$.
\end{lem}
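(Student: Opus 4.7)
The plan is to reduce each order-statistic event to a simultaneous event on an $\eta M$-sized subset, union-bound over subsets, and then pass from a pointwise bound to a Frobenius norm bound via a nonnegative linear combination.

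First, for $p_+(\Upsilon)$, the key observation is that $\{\norm{Z\wh{y}_{((1-\eta)M)}}_2^2 > (1+\wEps_+)k\}$ forces \emph{every} index at or above position $(1-\eta)M$ in the induced ordering to exceed the threshold, i.e. at least $M - (1-\eta)M = \eta M$ of the vectors $\wh{y}_j$ satisfy $\norm{Z\wh{y}_j}_2^2 > (1+\wEps_+)k$. This is equivalent to the existence of some $\eta M$-sized subset $\Upsilon(\eta) \subset \wh{\Upsilon}$ on which the pointwise inequality holds simultaneously. Applying Boole's inequality across the $\binom{M}{\eta M}$ possible subsets yields
\[
p_+(\Upsilon) \leq \binom{M}{\eta M}\max_{\Upsilon(\eta)}\Prob_Z\Bigl\{\,\forall\,\wh{y}\in\Upsilon(\eta),\ \norm{Z\wh{y}}_2^2 > (1+\wEps_+)k\,\Bigr\}.
\]

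Second, I convert the pointwise simultaneous event into a Frobenius norm event. Let $\Lambda_{\Upsilon(\eta)}$ be any diagonal matrix with strictly positive entries $\lambda_j$, indexed by the columns $y_j$ of $\Upsilon(\eta)$ (treated as a $D\times \eta M$ matrix of un-normalized difference vectors). Multiplying the pointwise inequality $\norm{Zy_j}_2^2/\norm{y_j}_2^2 > (1+\wEps_+)k$ by the nonnegative weight $\lambda_j^2\norm{y_j}_2^2$ and summing over $j$ gives
\[
\norm{Z\Upsilon(\eta)\Lambda_{\Upsilon(\eta)}}_F^2 = \sum_j \lambda_j^2\norm{Zy_j}_2^2 > (1+\wEps_+)k\sum_j \lambda_j^2\norm{y_j}_2^2 = (1+\wEps_+)k\,\norm{\Upsilon(\eta)\Lambda_{\Upsilon(\eta)}}_F^2.
\]
Thus the pointwise event is contained in the Frobenius event for every admissible $\Lambda_{\Upsilon(\eta)}$; passing to the minimum over $\Lambda_{\Upsilon(\eta)}$ (for the tightest bound) and then the maximum over $\Upsilon(\eta)$ produces the claimed inequality for $p_+(\Upsilon)$.

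The bound for $p_-(\Upsilon)$ is obtained by the same argument with reversed inequalities: $\{\norm{Z\wh{y}_{(\eta M - 1)}}_2^2 < (1-\wEps_-)k\}$ forces the $\eta M$ smallest values to lie below the threshold, which again picks out some $\eta M$-sized subset on which the pointwise inequality holds; the weighted-sum step then converts it into the Frobenius lower bound with the same $\binom{M}{\eta M}$ overhead. The only subtle point is bookkeeping: carefully tracking the inclusion of 0-indexed positions so that the $\eta M$ count is exact, and retaining the $\norm{y_j}_2^2$ factors when passing between $\wh{y}_j$ and $y_j$ so that the resulting quantity is genuinely $\norm{\Upsilon(\eta)\Lambda}_F^2$ rather than $\norm{\wh{\Upsilon}(\eta)\Lambda}_F^2$. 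Both steps are transparent once the order-statistic-to-subset reduction is in place, so I expect no real obstacle beyond this accounting.
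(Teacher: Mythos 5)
Your proof is correct and follows essentially the same route as the paper's: identify the order-statistic event with the existence of an $\eta M$-sized subset of uniformly too-large (or too-small) values, union bound over the $\binom{M}{\eta M}$ subsets, and then pass from the simultaneous pointwise event to the Frobenius-norm event by a positive-weighted sum with the weights absorbed into $\Lambda_{\Upsilon(\eta)}$. If anything, you make the placement of the $\min$ over $\Lambda_{\Upsilon(\eta)}$ slightly more explicit than the paper does, but the argument is the same.
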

\begin{rem}
For simplicity, the rest of the paper will take $\Upsilon(\eta)\Lambda_{\Upsilon(\eta)}$ as  $\Upsilon(\eta)$ itself, or its unit normalized version $\wh{\Upsilon}(\eta)$; though, there may potentially be some improvement gained by the freedom in choosing each $\Lambda_{\Upsilon(\eta)}$. 
\end{rem}
\begin{proof}
For $p_+(\Upsilon)$, let $\wh{y}_\ast=\wh{y}_{((1-\eta)M)}$. 
If $\norm{Z\wh{y}_\ast}_2^2> (1+\wEps_+)k$, then $\wh{y}_\ast$ is part of a subset $\Upsilon(\eta)\subset \Upsilon$ of size $\eta M$ for which all the $Z\wh{y}$ have norms too large. 
For any given subset $\Upsilon(\eta)$, consider the following probabilities, with $\lambda_y>0$ chosen for each $y$, 
\begin{align*}
\Prob_Z\set{\ev(\Upsilon(\eta))}&:=\Prob_Z\set{\norm{Z\wh{y}}_2^2>(1+\wEps_+)k \txt{for all} y\in \Upsilon(\eta)}\\
&=\Prob_Z\set{\norm{Zy}_2^2>(1+\wEps_+)k\norm{y}_2^2 
    \txt{for all} y\in \Upsilon(\eta)}\\ 
&=\Prob_Z\set{\norm{Zy\lambda_y}_2^2>(1+\wEps_+)k\norm{y\lambda_y}_2^2 
    \txt{for all} y\in \Upsilon(\eta)}\\
&\leq\Prob_Z\set{\sum_{y\in \Upsilon(\eta)} \norm{Zy\lambda_y}_2^2>(1+\wEps_+)k\sum_{y\in \Upsilon(\eta)} \norm{y\lambda_y}_2^2}. 
\end{align*}
The second and third lines follow by the linearity of $Z$. 
Passing to the sum above allows an important change of viewpoint using the Frobenius norm, as each $Zy\lambda_y$ is a column of $Z(\Upsilon(\eta)\Lambda_{\Upsilon(\eta)})$, with $\Lambda_{\Upsilon(\eta)}$ the appropriate diagonal matrix:
\[
\Prob_Z\set{\ev(\Upsilon(\eta))}
    \leq \Prob_Z\set{\norm{Z\Upsilon(\eta)\Lambda_{\Upsilon(\eta)}}_F^2
            >(1+\wEps_+)k \norm{\Upsilon(\eta)\Lambda_{\Upsilon(\eta)}}_F^2}. 
\]

Now, there are $\binom{M}{\eta M}$ subsets $\Upsilon(\eta)$ of $\Upsilon$ of size $\eta M$, so a union bound over such subsets gives
\begin{align*}
p_+(\Upsilon)
&\leq \binom{M}{\eta M}\max_{\Upsilon(\eta)}\Prob_Z\set{\ev(\Upsilon(\eta))}\\
&=\binom{M}{\eta M}\Prob_Z\set{\norm{Z\Upsilon(\eta)\Lambda_{\Upsilon(\eta)}}_F^2
            >(1+\wEps_+)k \norm{\Upsilon(\eta)\Lambda_{\Upsilon(\eta)}}_F^2}.
\end{align*}
The argument for $p_-(\Upsilon)$ is similar, noting that 
$\norm{Z\wh{y}_{(\eta M-1)}}_2^2<(1-\wEps_-)k$ forces $\eta M$ vectors $Z\wh{y}$ to have squared norms too small, so their corresponding sum is too small as well. 
\end{proof}

We now make assumptions on the matrix $Z$, allowing us to make use of the stable rank of the minibatches $\Upsilon(\eta)$. 
\begin{cor}\label{cor:pUpsilonBoundGaussianCase}
With the notation as in lemma~\ref{lem:OrderStatViaBlas3}, 
$\wEps_+= \wEps_-\sqrt{2}$, and $\wEps_-\in(0,1)$, if $Z$ has $\iid$ standard Gaussian entries, then 
\[
\max\set{p_+(\Upsilon),\,p_-(\Upsilon)}\leq \binom{M}{\eta M}\max_{\Upsilon(\eta)}\exp\left(-\frac{k\wEps_-^2}{4}r_\infty(\Upsilon(\eta))\right). 
\]
One may replace $\Upsilon(\eta)$ by $\wh{\Upsilon}(\eta)$ on the right hand side. 
\end{cor}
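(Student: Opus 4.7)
The plan is to apply Lemma~\ref{lem:OrderStatViaBlas3} with the natural choice $\Lambda_{\Upsilon(\eta)}=\Id$ (or the scaling giving the $\wh{\Upsilon}(\eta)$ version, treated identically), so the task reduces to controlling
\[
\Prob_Z\bigl[\norm{ZA}_F^2 > (1+\wEps_+)k\norm{A}_F^2\bigr]
\qtext{and}
\Prob_Z\bigl[\norm{ZA}_F^2 < (1-\wEps_-)k\norm{A}_F^2\bigr]
\]
for $A=\Upsilon(\eta)$; the combinatorial factor $\binom{M}{\eta M}$ and the outer maximum over $\Upsilon(\eta)$ come directly from the lemma. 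The max inside the exponent on the right-hand side of the corollary is reached by choosing the worst subset.

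For the Gaussian tails, I would use rotational invariance. Writing the SVD $A=U\Sigma V^\top$, the matrix $ZU$ has the same distribution as $Z$ and the Frobenius norm is orthogonally invariant, so $\norm{ZA}_F^2$ has the same law as $\sum_{i=1}^{k}\sum_j \sigma_j^2 G_{ij}^2$ with $\iid$ $G_{ij}\sim \mathcal{N}(0,1)$, a weighted sum of $\chi_1^2$ variables with mean $k\norm{A}_F^2$. The standard Chernoff/MGF method then applies via $\E\exp(\lambda(G^2-1)) = e^{-\lambda}(1-2\lambda)^{-1/2}$.

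For the upper tail, I would expand the log-MGF using the Taylor series $-\log(1-x)-x=\sum_{n\geq 2}x^n/n$ and apply the coarse bound $\sigma_j^{2n}\leq \sigma_1^{2n-2}\sigma_j^2$ to factor out the stable rank: with $\nu=2\lambda\norm{A}^2\in(0,1)$,
\[
\log \E\exp\bigl(\lambda(\norm{ZA}_F^2 - k\norm{A}_F^2)\bigr)
\leq \frac{k\,r_\infty(A)}{2}\bigl[-\log(1-\nu) - \nu\bigr].
\]
Combining with the Chernoff factor $-\lambda \wEps_+ k\norm{A}_F^2 = -\wEps_+\nu k r_\infty(A)/2$ and optimizing gives $\nu^\ast=\wEps_+/(1+\wEps_+)$ and the upper-tail exponent $\tfrac{kr_\infty(A)}{2}[\log(1+\wEps_+)-\wEps_+]$. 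The main obstacle will be the elementary inequality $\wEps_+ - \log(1+\wEps_+)\geq \wEps_+^2/4 = \wEps_-^2/2$ for $\wEps_+\in(0,\sqrt{2}]$: setting $h(x)=x-\log(1+x)-x^2/4$, one computes $h'(x)=x(1-x)/(2(1+x))$, so $h$ increases on $[0,1]$ and decreases thereafter, with $h(\sqrt{2})>0$ just barely. This is the tight condition that forces the coupling $\wEps_+=\sqrt{2}\wEps_-$ together with the restriction $\wEps_-\in(0,1)$.

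For the lower tail, $\E\exp(-\lambda G^2)=(1+2\lambda)^{-1/2}$ combined with $\log(1+x)\geq x-x^2/2$ for $x\geq 0$ and $\sum_j \sigma_j^4 \leq \norm{A}^2\norm{A}_F^2$ gives the crude but sufficient bound $\log\E\exp(-\lambda(\norm{ZA}_F^2-k\norm{A}_F^2))\leq k\lambda^2\norm{A}^2\norm{A}_F^2$; optimizing in $\lambda>0$ immediately yields exponent $-\wEps_-^2 kr_\infty(A)/4$ with no further restriction on $\wEps_-$. Plugging both tail bounds into Lemma~\ref{lem:OrderStatViaBlas3} — and noting the argument is unchanged if $\Lambda_{\Upsilon(\eta)}$ is taken to produce $\wh{\Upsilon}(\eta)$ instead of $\Upsilon(\eta)$ — yields the stated corollary.
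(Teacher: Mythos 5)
Your proposal is correct, and it takes a genuinely different route from the paper's, which simply cites the appendix Lemma~\ref{lem:DirectBulkHWGaussianCase} with $A=\Upsilon(\eta)$, uses $r_4\geq r_\infty$, and resolves the constants via Lemma~\ref{lem:EpsilonAdjustment}. You instead unroll the Gaussian MGF argument from scratch. Both start from the same combinatorial reduction (Lemma~\ref{lem:OrderStatViaBlas3}) and the same SVD/rotation-invariance device turning $\norm{ZA}_F^2$ into a $\sigma_j^2$-weighted sum of $\chi_1^2$'s, but the scalar MGF is then handled differently. For the upper tail the paper's Lemma~\ref{lem:DirectBulkHWGaussianCase} uses a quadratic bound on $e^{x+x^2}(1-x)$ and a case split on $r_4$ versus $r_\infty$, with its $\exp(-k\epsilon^2 r_\infty/8)$ form only stated for $\epsilon\in(0,1)$; your Taylor-series approach ($-\log(1-\nu)-\nu=\sum_{n\geq2}\nu^n/n$ together with $\sigma_j^{2n}\leq\sigma_1^{2n-2}\sigma_j^2$) factors out $r_\infty(A)$ exactly and lands on the optimized Cram\'er exponent $\tfrac{k r_\infty}{2}\bigl[\log(1+\wEps_+)-\wEps_+\bigr]$, after which the elementary inequality $x-\log(1+x)\geq x^2/4$ on $(0,\sqrt2]$ (increasing to $x=1$, then decreasing, with $h(\sqrt2)\approx0.033>0$) delivers $\exp(-k\wEps_-^2 r_\infty/4)$ over the full stated range $\wEps_-\in(0,1)$, i.e.\ $\wEps_+\in(0,\sqrt2)$. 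This is a real advantage: it makes the $\sqrt2$ coupling between $\wEps_+$ and $\wEps_-$ visibly tight and avoids the restriction $\wEps_+<1$ that the cited bound would otherwise impose. For the lower tail, your $\log(1+x)\geq x-x^2/2$ plus $\sum_j\sigma_j^4\leq\norm{A}^2\norm{A}_F^2$ gives $-\wEps_-^2 k r_\infty/4$ directly, bypassing the paper's detour through $r_4$. What you give up is modularity — the paper reuses its appendix lemma elsewhere — but what you gain is a self-contained derivation with cleaner bookkeeping and coverage of the entire claimed parameter range.
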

One point we want to stress even now is the presence of the \emph{product} $k\,r_\infty$ in the exponent. 
If one wants a fixed failure probability, $k$ need not be as large when $r_\infty$ is sizable. 
We shall give several examples in this paper where $r_\infty$ is large. 
\begin{proof}
The proof follows immediately from lemma~\ref{lem:DirectBulkHWGaussianCase} in the appendix, with $A=\Upsilon(\eta)$, recalling $r_4\geq r_\infty$ for the $p_-(\Upsilon)$ case. 
With $C_+=8$ and $C_-=4$, we improve the denominator from 8 to 4 by setting
$C_+/\wEps_+^2=C_-/\wEps_-^2$ as in in lemma~\ref{lem:EpsilonAdjustment}. 
\end{proof}

Consider the term $\norm{Z\Upsilon(\eta)}_F^2$ in lemma~\ref{lem:OrderStatViaBlas3}, taking $\Lambda_{\Upsilon(\eta)}$ as the identity. 
(The following discussion will also hold for other scalings, such as $\wh{\Upsilon}(\eta)$.) 
Labeling the \emph{rows} of $Z$ as $Z_i$, we can interchange the sums implicit in $\norm{Z\Upsilon(\eta)}_F^2$ to our advantage:
\begin{align*}
\sum_{y\in \Upsilon(\eta)} \norm{Zy}_2^2
=\sum_{y\in \Upsilon(\eta)}\sum_{i=1}^k (Z_i, y)^2
=\sum_{i=1}^k\sum_{y\in \Upsilon(\eta)} (y, Z_i)^2
=\sum_{i=1}^k \norm{\Upsilon(\eta)^\top Z_i}_2^2. 
\end{align*}
Because we assume the rows $Z_i$ are isotropic, $\E\norm{\Upsilon(\eta)^\top Z_i}_2^2=\norm{\Upsilon(\eta)^\top}_F^2=\norm{\Upsilon(\eta)}_F^2$, 
and we have transformed the bound on $p_+(\Upsilon)$ to involve the probabilities
\[
\Prob_Z\set{\sum_{i=1}^k \norm{\Upsilon(\eta)^\top Z_i}_2^2 
>(1+\wEps_+)\sum_{i=1}^k \norm{\Upsilon(\eta)}_F^2}, 
\]
and $\Upsilon(\eta)^\top$ is now viewed as a fixed matrix \emph{acting on} the random vectors $Z_i$. 
We may thus take a dual viewpoint on the dimension reduction problem: instead of considering how the matrix $Z$ acts on each difference vector, we consider how the transposed batch of difference vectors $\Upsilon(\eta)^\top$ acts on the matrix $Z^\top$, as mentioned in the introduction. 
If we take the $Z_i$ to be independent, with $\iid$ mean-zero unit-variance sub-gaussian entries, we can use lemma~\ref{lem:HWstableranks} in the appendix to harness \emph{both} the independence of the $Z_i$ \emph{and} the Hanson-Wright inequality:

\begin{lem}\label{lem:BulkHWOrderStat}
With the notation as in lemma~\ref{lem:OrderStatViaBlas3} and $\wEps_-\leq \wEps_+$, let $Z$ be a $k\times D$ random matrix with $\iid$ mean-zero unit-variance sub-gaussian entries, then 
\[
p_\pm(\Upsilon)
\leq 
\binom{M}{\eta M}\max_{\Upsilon(\eta)}
\exp\left(-ck\min\left(\frac{\wEps_-^2 r_4(\Upsilon(\eta))}{K^4}, 
\frac{\wEps_-\, r_\infty(\Upsilon(\eta))}{K^2}\right)\right), 
\]
with $K=\norm{Z_{11}}_{\psi_2}$. 
One may replace $\Upsilon(\eta)$ by $\wh{\Upsilon}(\eta)$ on the right hand side. 
\end{lem}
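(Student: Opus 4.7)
The plan is to combine lemma~\ref{lem:OrderStatViaBlas3} with the stable-rank formulation of the Hanson-Wright inequality stated in the appendix (lemma~\ref{lem:HWstableranks}). Start by fixing a single subset $\Upsilon(\eta)$ and, for simplicity, taking $\Lambda_{\Upsilon(\eta)}=\Id$ in lemma~\ref{lem:OrderStatViaBlas3}, so that the task reduces to an upper bound on
\[
\Prob_Z\!\left\{\norm{Z\Upsilon(\eta)}_F^2 > (1+\wEps_+)\,k\,\norm{\Upsilon(\eta)}_F^2\right\}
\qquad\text{and analogously for the lower tail.}
\]
Using the row decomposition already carried out in the paragraph preceding the lemma, write
\[
\norm{Z\Upsilon(\eta)}_F^2=\sum_{i=1}^k \norm{\Upsilon(\eta)^\top Z_i}_2^2=\sum_{i=1}^k Z_i^\top A\, Z_i,\qquad A:=\Upsilon(\eta)\Upsilon(\eta)^\top,
\]
so the target event becomes a deviation event for a sum of $k$ independent (identically distributed) sub-gaussian quadratic forms with common mean $\E Z_i^\top A\,Z_i=\tr A=\norm{\Upsilon(\eta)}_F^2$.

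Next, invoke lemma~\ref{lem:HWstableranks}: for sums of $k$ i.i.d.\ Hanson-Wright-type quadratic forms in sub-gaussian vectors with $\psi_2$-norm at most $K$, one gets
\[
\Prob\!\left\{\Big|\sum_{i=1}^k Z_i^\top A Z_i - k\tr A\Big| > t\right\}
\leq 2\exp\!\left(-c\min\!\left(\frac{t^2}{kK^4\,\norm{A}_F^2},\ \frac{t}{K^2\,\norm{A}}\right)\right).
\]
Plug in $t=\wEps_\pm\, k\,\norm{\Upsilon(\eta)}_F^2 = \wEps_\pm k\tr A$. The first term in the min simplifies to
\[
\frac{\wEps_\pm^2\, k\,(\tr A)^2}{K^4\,\norm{A}_F^2}=\frac{\wEps_\pm^2\,k}{K^4}\cdot r_4(\Upsilon(\eta)),
\]
since $r_4(\Upsilon(\eta))=\norm{\Upsilon(\eta)}_F^4/\norm{\Upsilon(\eta)\Upsilon(\eta)^\top}_F^2=(\tr A)^2/\norm{A}_F^2$, and the second term to
\[
\frac{\wEps_\pm\, k\,\tr A}{K^2\,\norm{\Upsilon(\eta)}^2}=\frac{\wEps_\pm\,k}{K^2}\cdot r_\infty(\Upsilon(\eta)).
\]
This produces a one-sided tail bound on $p_\pm$ of exactly the desired shape for a single $\Upsilon(\eta)$; the factor of $2$ gets absorbed into the constant $c$ (or is gone because we only need the one-sided Hanson-Wright bound of lemma~\ref{lem:HWstableranks}).

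Finally, uniformize using $\wEps_-\leq\wEps_+$: replacing $\wEps_+$ by $\wEps_-$ can only shrink the exponent, giving a single upper bound valid for both $p_+(\Upsilon)$ and $p_-(\Upsilon)$. A union bound over the $\binom{M}{\eta M}$ choices of $\Upsilon(\eta)$, as already performed in lemma~\ref{lem:OrderStatViaBlas3}, yields the stated inequality. The $\wh{\Upsilon}(\eta)$ version follows by repeating the computation with $\wh{\Upsilon}(\eta)$ in place of $\Upsilon(\eta)$, which is allowed by the freedom to choose $\Lambda_{\Upsilon(\eta)}$ as the diagonal of reciprocal column norms. The main thing to be careful about is matching the parameter $t$ to the stable-rank normalizations so that $r_4$ and $r_\infty$ cleanly appear in the two branches of the $\min$; once that identification is made, the rest of the argument is routine, since Hanson-Wright directly supplies the tail.
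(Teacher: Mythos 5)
Your proposal is correct and follows the same route as the paper: the paper's proof simply takes $A=\Upsilon(\eta)$ in lemma~\ref{lem:HWstableranks} to bound the probabilities appearing on the right-hand side of lemma~\ref{lem:OrderStatViaBlas3}, which is exactly what you do (you merely unpack the quadratic-form identification $\norm{Z\Upsilon(\eta)}_F^2=\sum_i Z_i^\top \Upsilon(\eta)\Upsilon(\eta)^\top Z_i$ and the stable-rank substitutions explicitly). Your handling of the factor of $2$, the uniformization via $\wEps_-\leq\wEps_+$, and the unit-normalized case via the choice of $\Lambda_{\Upsilon(\eta)}$ are all consistent with the paper.
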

\begin{proof}
To bound the probabilities on the right hand side of lemma~\ref{lem:OrderStatViaBlas3}, just take $A=\Upsilon(\eta)$ in lemma~\ref{lem:HWstableranks}. 
\end{proof}
\begin{rem}
Recall $r_4\geq r_\infty$ always, so if $K\geq 1$, we can write
\[
p_\pm(\Upsilon)
\leq 
\binom{M}{\eta M}\max_{\Upsilon(\eta)}
\exp\left(-C k\min(\wEps_-^2, \epsilon)
r_\infty(\Upsilon(\eta))\right). 
\]
\end{rem}

We now can control the probabilities in equations~\eqref{eqn:OrderStatEventBatchLevel} for a given batch $\Upsilon$ of size $M$. 
The control is in terms of $r_\infty(\Upsilon(\eta))$ or $r_4(\Upsilon(\eta))$ for subsets of size $\eta M$. 
Because the target dimension $k$ is a global parameter, it needs to be in terms of global quantitities, but the stable ranks above vary over minibatches. 
To make this transition and to help with bookkeeping, recall that a \emph{decomposition} of a graph is a partition of its edges. 

Let $\decomp$ be a decomposition of the complete graph on $N$ vertices, into batches $\Upsilon$ of size $M$. 
If $M$ does not divide $N$, that is, with $N=jM+n$, we can expand several of the batches to $M+s\geq M$ with $s=\ceil{n/j}<M$. 
For those batches, $\eta(M+s)$ need not be an integer, so take $\tilde{\eta}$ as
\begin{equation}\label{eqn:EtaTilde}
\tilde{\eta}=\eta\frac{M}{M+s} \qtext{in order for} \tilde{\eta}(M+s)=\eta M, 
\end{equation}
and set $\Upsilon(\eta)=\Upsilon(\tilde{\eta})$ when the batch size is not $M$.
Note smaller $\eta$ values only help us ensure a total fraction $(1-\eta)$ of distances is preserved.
For this decomposition, let 
\[
R_\infty(\eta M):=
R_\infty(\eta M; \decomp):=\min_{\Upsilon\in\decomp}\min_{\Upsilon(\eta)\subset \Upsilon} r_\infty(\Upsilon(\eta))
\]
be the minimum stable rank of the $\eta M$ sized minibatches from such batches.
We then have our first theorem, written in terms of $\epsilon$ in the original equation~\eqref{eqn:JLUnsquaredDistancesPairwise}.  
\begin{thm}\label{thm:FreeDecompBulkJL}
Let $0<\eta<1$, with $\eta M\in \N$, and let $0<\epsilon\leq 2/3$.  
For a set $X$ of $N$ points in $\R^D$, $R_\infty(\eta M)$ as above, and $Z$ a $k\times D$ matrix with $\iid$ mean-zero unit-variance sub-gaussian entries, 
equation~\eqref{eqn:JLUnsquaredDistancesPairwise} holds
for at least $(1-2\eta)\binom{N}{2}$ pairs $x,x'\in X$, with probability at least $1-\delta$, provided
\[
k\geq C\frac{\max(K^4,K^2)}{\epsilon^2}\left(\log(2e/\eta)\frac{\eta M}{R_\infty(\eta M)}
+\frac{\log(N^2/(M\delta))}{R_\infty(\eta M)}\right), 
\qtext{and} \gamma(\epsilon)=1+\epsilon^2. 
\]
Here, $K=\norm{Z_{11}}_{\psi_2}$ and $C$ is an absolute constant. 
In the case of standard Gaussian entries, one can replace $C\max(K^4,K^2)$ and $\gamma(\epsilon)$ by 
\[
C(\epsilon):=4\left(\frac{(1+\epsilon)^2+(1-\epsilon)^2\sqrt{2}}{4}\right)^2 < 2.25  
\qtext{and} 
\gamma(\epsilon)=\frac{(1+\epsilon)^2 + (1-\epsilon)^2\sqrt{2}}{1+\sqrt{2}}, 
\]
respectively. 
When the distribution for $Z$ is understood, we denote $C\max(K^4,K^2)$ or $C(\epsilon)$ by 
$C_{\lbrack\ref{thm:FreeDecompBulkJL}\rbrack}$, and likewise $\gamma(\epsilon)$ by $\gamma_{\lbrack\ref{thm:FreeDecompBulkJL}\rbrack}$. 
\end{thm}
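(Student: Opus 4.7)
The plan is to combine the per-batch failure bound from Lemma~\ref{lem:BulkHWOrderStat} (or Corollary~\ref{cor:pUpsilonBoundGaussianCase} in the Gaussian case) with a union bound over the batches of a fixed decomposition $\decomp$ of the complete graph on $N$ vertices. The key setup step, already laid out in the excerpt, is that whenever both events defining $p_+(\Upsilon)$ and $p_-(\Upsilon)$ \emph{fail} to occur on a batch $\Upsilon$, at least $(1-2\eta)M$ of the pairs in $\Upsilon$ satisfy equation~\eqref{eqn:JLSquaredDistancesPairwise}; summing over the $\abs{\decomp} \leq \binom{N}{2}/M$ batches then recovers $(1-2\eta)\binom{N}{2}$ preserved pairs globally. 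The mismatch-in-batch-size case is handled by the adjustment $\tilde\eta$ of equation~\eqref{eqn:EtaTilde}, which only helps, since smaller effective $\eta$ only increases the preserved fraction.

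For each batch, Lemma~\ref{lem:BulkHWOrderStat} gives $p_\pm(\Upsilon) \leq \binom{M}{\eta M}\exp(-ck\wEps_-^2 R_\infty(\eta M)/\max(K^4,K^2))$, after using $\wEps_- \leq 1$ to collapse the min to its quadratic branch and $r_4 \geq r_\infty$ to pass to a single stable-rank quantity. The standard estimate $\binom{M}{\eta M} \leq (e/\eta)^{\eta M}$ together with $2 \leq (2e/\eta)^{\eta M}/(e/\eta)^{\eta M}$ for $\eta M \geq 1$ absorbs the factor $2$ from considering both $p_+$ and $p_-$, yielding $\log[2\binom{M}{\eta M}] \leq \eta M \log(2e/\eta)$. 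Imposing that the total failure probability be at most $\delta$, taking logarithms, and solving for $k$ then produces the sub-gaussian bound with $\wEps_-^{-2}$ in place of $\epsilon^{-2}$.

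The remaining step is to convert $\wEps_\pm$ back to $\epsilon$ via Lemma~\ref{lem:EpsilonAdjustment}. Taking $\gamma(\epsilon) = 1+\epsilon^2$ yields $\wEps_+ = \wEps_- = 2\epsilon/(1+\epsilon^2)$, so $\wEps_-^2 = \Theta(\epsilon^2)$ on $\epsilon \in (0,2/3]$ and the ratio is swallowed into the absolute constant $C$. For the Gaussian case I would instead invoke Corollary~\ref{cor:pUpsilonBoundGaussianCase}, whose optimization already forces $\wEps_+ = \wEps_-\sqrt{2}$ (so that the denominators $C_+ = 8$ and $C_- = 4$ contribute equally). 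Solving this simultaneously with $1+\wEps_+ = (1+\epsilon)^2/\gamma$ and $1-\wEps_- = (1-\epsilon)^2/\gamma$ gives the stated $\gamma(\epsilon) = [(1+\epsilon)^2 + \sqrt{2}(1-\epsilon)^2]/(1+\sqrt{2})$, and yields the compact expression $\wEps_- = 4\epsilon/[(1+\sqrt{2})\gamma]$; substituting into the requirement $k \geq (4/\wEps_-^2)(\cdots)$ produces the closed form for $C(\epsilon)$.

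The main obstacle is the numerical bookkeeping in the Gaussian case: in particular, verifying the closed-form identity $C(\epsilon) = 4((1+\epsilon)^2 + \sqrt{2}(1-\epsilon)^2)^2/16$ and the numerical bound $C(\epsilon) < 2.25$ for $\epsilon \in (0, 2/3]$ (the latter reduces to evaluating a simple quadratic-in-$\epsilon$ expression at the endpoint). The sub-gaussian case, by contrast, is essentially a union bound plus Stirling-type estimates, so its only subtlety lies in ensuring the constant $C$ genuinely absorbs the $\epsilon \leq 2/3$ dependence in $\wEps_-^{-2}$.
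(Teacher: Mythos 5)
Your proposal follows the paper's proof essentially line for line: union bound over batches plus pigeonhole to get $(1-2\eta)\binom{N}{2}$ preserved pairs, Lemma~\ref{lem:BulkHWOrderStat} (or Corollary~\ref{cor:pUpsilonBoundGaussianCase}) for the per-batch tail, $r_4\geq r_\infty$ together with $\wEps_-\leq 1$ to reduce to the quadratic/$r_\infty$ branch, and Lemma~\ref{lem:EpsilonAdjustment} with $C_+=C_-$ (respectively $C_+=2C_-$) for the $\wEps_\pm\to\epsilon$ conversion and the stated $\gamma(\epsilon)$ and $C(\epsilon)$. The only minor divergence is bookkeeping: you obtain $\log(2e/\eta)$ by folding the factor $2$ (from $p_++p_-$) into $\binom{M}{\eta M}$ via $2\le 2^{\eta M}$, whereas the paper derives $\log(2e/\eta)$ from the expanded-batch binomial $\binom{M+s}{\eta M}$ and absorbs the separate factor $2$ into the batch count $N^2/(2M)$, which matters only if you want your union bound to cover the oversized remainder batches without an extra factor.
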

To make sense of the above, suppose $\eta M=D$, and then recall $1\leq R_\infty(\eta M)\leq D$ as the stable rank is always bounded above by the ambient dimension.   
If $R_\infty(\eta M)=c D$, for $c$ bounded away from 0, the term attached to $\log(2e/\eta)$ becomes constant, while the remaining becomes constant as soon as 
$D\gtrsim \log(2N^2/(D\delta))$. 

Note $R_\infty(\eta M)$ depends on the decomposition $\decomp$, which we are free to choose at will. 
The remainder of the paper will address how to choose $\decomp$ (and the batch size $M$). 
\begin{proof}
We treat the upper and lower bounds on $\norm{Z(x-x')}_2^2$ separately. 
If 
\begin{equation}\label{eqn:unionBound}
\sum_{\Upsilon\in\decomp}(p_+(\Upsilon)+p_-(\Upsilon))\leq \delta, 
\end{equation}
then with probability at least $1-\delta$ none of the events defining $p_\pm(\Upsilon)$ hold, over all the batches $\Upsilon$ of the decomposition. 
So by equation~\eqref{eqn:OrderStatEventBatchLevel}, 
with probability at least $1-\delta$, we preserve at least (recalling $\tilde{\eta}<\eta$ when needed)  
\[
\sum_{\Upsilon\in\decomp}(1-\eta)\abs{\Upsilon}=(1-\eta)\binom{N}{2}
\]
of the squared distances within a $(1+\wEps_+)$ tolerance, and another $(1-\eta)\binom{N}{2}$ of the squared distances within a $(1-\wEps_-)$ tolerance. 
By the pigeonhole principle, at least $(1-2\eta)\binom{N}{2}$ of the distances are approximatley preserved on both sides. 

It remains to choose $k$ to ensure equation~\eqref{eqn:unionBound} holds. 
We always have, by lemma~\ref{lem:binomEstimate}, 
\[
\binom{M+s}{\tilde{\eta}(M+s)}\leq \left(\frac{e(M+s)}{\eta M}\right)^{\eta M}
=\exp\left(\log\left(\frac{e(M+s)}{\eta M}\right)\eta M\right)
<\exp(\log(2e/\eta)\eta M), 
\]
while $\binom{M}{\eta M}\leq \exp(\log(e/\eta)\eta M)$, 
so by lemma~\ref{lem:BulkHWOrderStat}
\[
p_+(\Upsilon)+p_-(\Upsilon)\leq 2\exp\left(-\frac{c}{\max(K^4,K^2)} k\wEps_-^2 R_\infty(\eta M) + \log(2e/\eta)\eta M\right). 
\]
There are at most $\floor{\binom{N}{2}/M}\leq N^2/(2M)$ batches $\Upsilon$ in the decomposition, expanding several of the batches to absorb any remainder $N\mod M$ if necessary, so we need $k$ to satisfy
\[
2(N^2/(2M))\exp\left(-\frac{c}{\max(K^4,K^2)} k\wEps_-^2 R_\infty(\eta M) 
    + \log(2e/\eta)\eta M\right)\leq\delta. 
\]

Using lemma~\ref{lem:EpsilonAdjustment} with $C_+=C_-=1$, we achieve the desired bound for $k$ in terms of $\epsilon$ by taking
\[
C=\frac1{c}\left(\frac{1+\epsilon^2}{2}\right)^2 < 0.522\frac1{c}. 
\]
To replace $C\max(K^4, K^2)$ in the Gaussian case, use corollary~\ref{cor:pUpsilonBoundGaussianCase} with lemma~\ref{lem:EpsilonAdjustment} on $C_+=2C_-$ to find 
\[
\frac{4}{\wEps_-^2}=\frac{C(\epsilon)}{\epsilon^2}. \qedhere
\]
\end{proof}


From lemma~\ref{lem:OrderStatViaBlas3} we are free to scale vectors in each batch $\Upsilon$ to have unit norm. 
So we can also define for a given decomposition
\[
\wh{R}_\infty(\eta M; \decomp)=\min_{\wh{\Upsilon}\in\decomp}\min_{\wh{\Upsilon}(\eta)\subset \wh{\Upsilon}} r_\infty(\wh{\Upsilon}(\eta)). 
\]
This normalization allows us to use linear algebra to control $r_\infty(\wh{\Upsilon}(\eta))$ deterministically, for any of the $\eta M$-sized subsets of $\wh{\Upsilon}$, once we have control of $\sigma_1(\wh{\Upsilon})$. 
When the underlying data is random, we can thus avoid the need to take union bounds over the minibatches. 
\begin{lem}\label{lem:StableRanksOfSubsets}
Let $A$ be a $D\times M$ matrix with columns of constant norm. 
If $B$ is a $D\times m$ submatrix of $A$, then
\[
r_\infty(B)\geq \max\left(\frac{m}{M}r_\infty(A), 1\right). 
\]
In particular, if $r_\infty(A)\geq cM$, then $r_\infty(B)\geq \max(cm,1)$. 
\end{lem}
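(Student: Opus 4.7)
The plan is to exploit the two pieces of the definition $r_\infty(B)=\|B\|_F^2/\|B\|^2$ separately, using the column structure for the Frobenius norm and the submatrix structure for the operator norm.

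First I would handle the Frobenius norm. Since each column of $A$ has the same norm $c$, we get $\|A\|_F^2 = Mc^2$, and picking any $m$ columns of $A$ to form $B$ yields $\|B\|_F^2 = mc^2 = (m/M)\|A\|_F^2$. This identity is where the constant-column-norm hypothesis is used; without it one could only say $\|B\|_F^2 \leq \|A\|_F^2$, which would not suffice.

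Next I would bound $\|B\| \leq \|A\|$. Every unit vector $v\in\R^m$ extends to a unit vector $\tilde v\in\R^M$ by zero-padding in the coordinates not selected for $B$, and $Bv = A\tilde v$. Taking the supremum over $v$ gives $\|B\|\leq \|A\|$, hence $\|B\|^2\leq \|A\|^2$. Combining the two bounds,
\[
r_\infty(B)=\frac{\|B\|_F^2}{\|B\|^2}\geq \frac{(m/M)\|A\|_F^2}{\|A\|^2}=\frac{m}{M}r_\infty(A).
\]

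For the other half of the max, I would note the universal inequality $\|B\|_F^2 = \sum_i \sigma_i(B)^2 \geq \sigma_1(B)^2 = \|B\|^2$, which is valid for any nonzero matrix and gives $r_\infty(B)\geq 1$ directly. Since the columns of $A$ have common nonzero norm, $B$ is indeed nonzero and the ratio is well defined. The ``in particular'' statement then follows by substituting $r_\infty(A)\geq cM$ into the first bound to get $r_\infty(B)\geq cm$, and taking the max with $1$. There is no real obstacle here; the only subtlety is remembering that the constant-column-norm assumption is essential for turning the trivial Frobenius inequality into the sharper ratio $m/M$.
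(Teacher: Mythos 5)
Your argument is correct and follows the same route as the paper's proof: you establish $\|B\|\le\|A\|$ by embedding the lower-dimensional unit sphere via zero-padding (the paper writes this as partitioning $A=(B'|B)$ and viewing $S^{m-1}$ inside $S^{M-1}$), use the constant-column-norm hypothesis to get $\|B\|_F^2=(m/M)\|A\|_F^2$, and finish with the universal bound $r_\infty\ge 1$. No issues.
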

To be useful, we need $c\gg 1/m$ here.
\begin{proof}
To control $\norm{B}$, partition the matrix $A$ as $A=(B'|B)$ with $B'$ a $D\times (M-m)$ matrix. 
Viewing the unit sphere $S^{m-1}$ as $o\times S^{m-1}$ in $S^{M-1}$, 
\[
\norm{A} = \max_{x\in S^{M-1}}\norm{(B'|B)x}_2\geq \max_{x\in S^{m-1}}\norm{(B'|B)x}_2=
\max_{x\in S^{m-1}}\norm{Bx}_2=\norm{B}. 
\]
That is, $\norm{B}\leq \norm{A}$. 
Note $r_\infty(A)=r_\infty(cA)$ for any nonzero scalar $c$, so we may assume the columns of $A$ all have norm 1. 
Under this assumption, 
\[
r_\infty(B)=\frac{\sum\norm{B_j}_2^2}{\norm{B}^2}
=\frac{m}{M}\frac{\sum\norm{A_j}_2^2}{\norm{B}^2}
\geq \frac{m}{M}\frac{\sum\norm{A_j}_2^2}{\norm{A}^2}=\frac{m}{M}r_\infty(A). 
\]
Recalling $r_\infty\geq 1$ always completes the proof. 
\end{proof}

If we set 
\[
\wh{R}_\infty(M):=\wh{R}_\infty(M;\decomp)
:=\min_{\Upsilon\in\decomp} r_\infty(\wh{\Upsilon}), 
\qtext{we can state the following theorem.}
\]
\begin{thm}\label{thm:UnitFreeDecompBulkJL}
Let $0<\eta<1$, with $\eta M\in \N$, and $0<\epsilon\leq 2/3$. 
For a set $X$ of $N$ points in $\R^D$, $\wh{R}_\infty(M)$ as above, and $Z$ a $k\times D$ matrix with $\iid$ mean-zero unit-variance sub-gaussian entries, 
then equation~\eqref{eqn:JLSquaredDistancesPairwise} holds
for at least $(1-2\eta)\binom{N}{2}$ pairs $x,x'\in X$, with probability at least $1-\delta$, provided
\[
k\geq \frac{C_{\lbrack\ref{thm:FreeDecompBulkJL}\rbrack}}{\epsilon^2}\left(\log(2e/\eta)
    \frac{M}{\wh{R}_\infty(M)}
+\frac{\log(N^2/(M\delta))}{\max(\eta\wh{R}_\infty(M), 1)}\right) 
\qtext{and}
\gamma(\epsilon)=\gamma_{\lbrack\ref{thm:FreeDecompBulkJL}\rbrack}. 
\]
Here, $C_{\lbrack\ref{thm:FreeDecompBulkJL}\rbrack}$ depends on $K=\norm{Z_{11}}_{\psi_2}$. 
In the case of independent standard Gaussian entries, $C_{\lbrack\ref{thm:FreeDecompBulkJL}\rbrack}<2.25$.  
\end{thm}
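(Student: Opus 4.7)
The strategy is to follow the proof of Theorem \ref{thm:FreeDecompBulkJL} almost verbatim, but replace each batch $\Upsilon$ with its unit-normalized version $\wh{\Upsilon}$ (as permitted by the remark after Lemma \ref{lem:OrderStatViaBlas3}, taking $\Lambda_{\Upsilon(\eta)}$ to be the diagonal matrix that normalizes each column) and then use Lemma \ref{lem:StableRanksOfSubsets} to pass deterministically from the stable rank of a full normalized batch to the stable ranks of its minibatches, avoiding the need to track the minibatch stable ranks separately.

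First, Lemma \ref{lem:BulkHWOrderStat} applied with $\wh{\Upsilon}(\eta)$ in place of $\Upsilon(\eta)$ gives the same exponential control on $p_\pm(\Upsilon)$, but now in terms of $r_\infty(\wh{\Upsilon}(\eta))$. Since each $\wh{\Upsilon}$ has unit-norm columns by construction, Lemma \ref{lem:StableRanksOfSubsets} applies with $A = \wh{\Upsilon}$ and $B = \wh{\Upsilon}(\eta)$, yielding
\[
r_\infty(\wh{\Upsilon}(\eta)) \;\geq\; \max\!\bigl(\eta\, r_\infty(\wh{\Upsilon}),\,1\bigr)
\]
for every $\eta M$-sized sub-batch. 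Minimizing over batches in $\decomp$ gives the deterministic bound $\wh{R}_\infty(\eta M;\decomp) \geq \max(\eta \wh{R}_\infty(M),\,1)$.

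Next, I would run the same union bound as in Theorem \ref{thm:FreeDecompBulkJL}: with at most $N^2/(2M)$ batches and $\binom{M}{\eta M} \leq \exp(\log(2e/\eta)\eta M)$ minibatch choices per batch (using \eqref{eqn:EtaTilde} for leftover batches as before), equation \eqref{eqn:unionBound} holds provided
\[
\tfrac{c}{\max(K^4,K^2)}\, k\, \wEps_-^2 \max(\eta \wh{R}_\infty(M),1)
\;\geq\; \log(2e/\eta)\,\eta M \;+\; \log\bigl(N^2/(M\delta)\bigr).
\]
I would then split this into two separate sufficient conditions on $k$, one per summand on the right, and use the elementary inequality
\[
\frac{\eta M}{\max(\eta \wh{R}_\infty(M),\,1)} \;\leq\; \frac{M}{\wh{R}_\infty(M)},
\]
which holds because the left side equals $M/\wh{R}_\infty(M)$ when $\eta \wh{R}_\infty(M)\geq 1$ and equals $\eta M \leq M/\wh{R}_\infty(M)$ otherwise (recall $\wh{R}_\infty(M)\leq M$). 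This converts the $\log(2e/\eta)$ term into the form $\log(2e/\eta)\,M/\wh{R}_\infty(M)$ appearing in the theorem, while the $\log(N^2/(M\delta))$ term keeps its denominator $\max(\eta\wh{R}_\infty(M),1)$.

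Finally, the translation from $\wEps_-$ to $\epsilon$ via Lemma \ref{lem:EpsilonAdjustment} proceeds exactly as in Theorem \ref{thm:FreeDecompBulkJL}, absorbing the factor $2$ from the split into the constant $C_{\lbrack\ref{thm:FreeDecompBulkJL}\rbrack}$; the pigeonhole step recovering $(1-2\eta)\binom{N}{2}$ simultaneously controlled distances is identical. The only genuinely new ingredient is Lemma \ref{lem:StableRanksOfSubsets}; the mild bookkeeping obstacle is the split-and-simplify step, where one has to verify the inequality above so that $\wh{R}_\infty(M)$ appears cleanly in the $\log(2e/\eta)$ term rather than $\max(\eta\wh{R}_\infty(M),1)$.
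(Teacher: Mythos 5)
Your proof is correct and follows essentially the same route as the paper's: apply Lemma~\ref{lem:BulkHWOrderStat} to the unit-normalized minibatches, invoke Lemma~\ref{lem:StableRanksOfSubsets} to pass deterministically from $r_\infty(\wh{\Upsilon}(\eta))$ to $r_\infty(\wh{\Upsilon})$, and run the union bound of Theorem~\ref{thm:FreeDecompBulkJL}. Your bookkeeping is in fact slightly more explicit than the paper's terse proof: the paper only quotes $r_\infty(\wh{\Upsilon}(\eta))\geq \eta\,r_\infty(\wh{\Upsilon})$ but tacitly needs the full $\max(\eta\,r_\infty(\wh{\Upsilon}),1)$ from Lemma~\ref{lem:StableRanksOfSubsets} to get the $\max(\eta\wh{R}_\infty(M),1)$ denominator in the $\log(N^2/(M\delta))$ term, and your elementary inequality $\eta M/\max(\eta\wh{R}_\infty(M),1)\leq M/\wh{R}_\infty(M)$ cleanly recovers the stated $\log(2e/\eta)$ coefficient; this is a faithful reconstruction of what the paper leaves implicit.
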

\begin{proof}
In the proof of theorem~\ref{thm:FreeDecompBulkJL}, we shall replace lemma~\ref{lem:BulkHWOrderStat} by 
\[
p_\pm(\wh{\Upsilon})
\leq 
\binom{M}{\eta M}
\exp\left(-ck \eta r_\infty(\wh{\Upsilon})\min\left(\frac{\wEps_-^2}{K^4}, 
\frac{\wEps_-}{K^2}\right)\right). 
\]
By lemma~\ref{lem:StableRanksOfSubsets}, $r_\infty(\wh{\Upsilon}(\eta))\geq \eta r_\infty(\wh{\Upsilon})$, no matter the subset $\wh{\Upsilon}(\eta)$ chosen. 
We can then upper bound the right hand side of lemma~\ref{lem:BulkHWOrderStat}, recalling $r_4\geq r_\infty$. 
\end{proof}

The choice of decomposition $\decomp$ matters, yielding very different $\wh{R}_\infty(M)$ values, even with $M$ fixed, as seen in the following. 
\begin{eg}\label{eg:BatchChoiceMattersStableRankSimplex}
The difference vectors for the (vertices of) the standard simplex in $\R^D$ are $\set{e_i-e_j}_{i\neq j}$, with $1\leq i\leq D$. 
Suppose the decomposition $\decomp$ involved \qt{stars} made by subsets $\set{e_j - e_1}$ for $2\leq j\leq M+1\leq D$. 
Using these difference vectors as rows, the corresponding matrix $\wh{\Upsilon}$ looks like (relabeling as necessary)
\[
\wh{\Upsilon}=\frac1{\sqrt{2}}\begin{pmatrix}
\Id_{M}& -\I
\end{pmatrix}
\]
with $\I=(1,\ldots, 1)^\top\in \R^{M}$. 
If $z=(0,\ldots, 0, 1)^\top$, we have $\norm{\wh{\Upsilon}}^2\geq \norm{\wh{\Upsilon} z}_2^2 = M/2$, so $\wh{R}_\infty(M)\leq r_\infty(\wh{\Upsilon})\leq M/(M/2)=2$. 
Because $\wh{R}_\infty(M)$ is bounded by a constant, this decomposition is of no use in theorem~\ref{thm:UnitFreeDecompBulkJL}. 

If we instead consider a decomposition involving \qt{cycles} of length $D=M$, that is, subsets 
\[
\set{e_i-e_j\st i,j\txt{appear exactly twice}}, 
\]
then the corresponding difference vectors form a circulant matrix
\[
\Upsilon=\begin{pmatrix}
1 &  0 & \cdots & 0 &-1\\
-1 & 1 &        &   & 0\\ 
  & -1 & \ddots &   & \vdots\\
  &    & \ddots & 1 & 0\\
  &    &        & -1 & 1
\end{pmatrix}. 
\]
Viewing $\Upsilon$ as a complex matrix, we can diagonalize it using the discrete Fourier transform matrix $V=F_D$ as $V^{-1}\Upsilon V=\diag(\lambda_1,\ldots, \lambda_D)$. See~\cite[page~222]{GolubVanLoan2013}.  
Here, 
\[
\lambda_j = \left(\bar{F}_D\begin{pmatrix}
1 \\
-1 \\
o
\end{pmatrix}\right)_j
=1-\omega^{j-1}
\qtext{with}
\omega=\exp(2\pi i/D). 
\]
The squares of the singular values of $\Upsilon$ are then the eigenvalues of $\Upsilon^\top \Upsilon$, that is, 
\[
\sigma_j^2(\Upsilon)=\lambda_j^\ast \lambda_j = (1-\bar{\omega}^{j-1})(1-\omega^{j-1})
=2-2\cos(2\pi(j-1)/D)\leq 4, 
\]
with equality achieved when $D$ is even at $j=1+D/2$. 
Because the cycle has length $D$ here, we have $r_\infty(\Upsilon)\geq (2D)/4 = D/2$, for each such cycle. 

\emph{If} such a decomposition involved only such cycles, we could conclude (because the vectors have constant norm) $\wh{R}_\infty(M)\geq D/2$, which would be very useful for theorem~\ref{thm:UnitFreeDecompBulkJL}. 
\end{eg}

We review in the next section a construction originally due to Walecki that provides such cycle decompositions as above when $D$ is odd, and the next best thing when $D$ is even. 
The construction will also be useful when our data set is drawn $\iid$; in particular, we can address cases where the minimal stable ranks $R_\infty$ or $\wh{R}_\infty$ are too pessimistic, but \qt{most} batches have larger stable ranks. 


\section{The Walecki Construction}\label{sec:Walecki}
The sets $Y$ and $\wY$ describe $\binom{N}{2}$ directions in space.
Even if the data generating these directions is sampled independently, the directions themselves are not independent; for example, $x-y$ is not independent of $x-z$, while $x-y$ and $z-w$ are, assuming $x,y,z$, and $w$ are drawn independently.  
However, we are partitioning the directions into batches. 
If we can guarantee that in each batch, the directions are independent or only \qt{weakly} dependent, we can exploit these properties, ensuring the stable ranks of many batches are bounded below by given values. 

Viewing $Y$ or $\wY$ as corresponding to the edges of the complete graph $K_N$ on $N$ vertices, we are asking for a partition of the edges, a decomposition, such that each subset involves each vertex once, or at most twice (say). 
Thankfully, Walecki provided such a construction in the 1880's; 
see~\cite[page~161, Sixi\`eme R\'ecr\'eation]{LucasRecreations1882} for the original French and \cite{AlspachWonderfulWalecki2008} for an English overview. 
We use a different indexing scheme than presented in~\cite{AlspachWonderfulWalecki2008}, which is easier for us to use. 

We can arrange $N$ vertices in the complex plane as follows: $N-1$ vertices corresponding to the $(N-1)$st roots of unity, and the remaining vertex at the origin $o$. 
Let $\omega=\exp(2\pi i/(N-1))$. 
We can thus label the nonzero vertices as $\set{\omega^j}_{j=0}^{N-2}$. 
We partition their corresponding edges based on their products $W_p:=\set{\edge{\omega^j, \omega^k} \st \omega^j\omega^k=\omega^p}$, or more formally: 
\[
W_p:=\set{\edge{\omega^j,\omega^k}\st 0\leq j\neq k\leq N-2 \txt{and} j+k \equiv p \mod N-1}
\]
The $N-1$ sets $\set{W_p}$ represent \emph{1-regular subgraphs} of $K_N$: each vertex has degree one, for if $\edge{\omega^j,\omega^k}$ and $\edge{\omega^j,\omega^l}$ are in $W_p$, then
\[
j+k \equiv p \equiv j+l, \qtext{that is,} k\equiv p-j\equiv l, 
\]
forcing $k=l$ because $0\leq k,l\leq N-2$. 
There are only $N-1$ vertices on the circle, so there are at most $\floor{(N-1)/2}$ edges in $W_p$. 

The cyclic group $\Z_{N-1}$ generated by $\omega$ acts freely on the vertices of the circle via counterclockwise rotations, so $\Z_{N-1}$ also acts on the $W_p$ sets via
\[
\omega W_p := \set{ \edge{\omega \omega^j, \omega \omega^k}\st j+k \equiv p \mod N-1} 
= W_{(p+2)\mod N-1}, 
\]
corresponding to the product $(\omega\omega^j)(\omega\omega^k)=\omega^2\omega^{j+k}$. 
Consequently, it is enough to discuss $W_0$ and $W_1$. 

With $k>0$, the edges of $W_0$ are of the form $\edge{\omega^k, \omega^{-k}}$, while those of $W_1$ are of the form $\edge{\omega^k, \omega^{-(k-1)}}$. 
To each edge $\edge{\omega^k, \omega^{-k}}$ in $W_0$, there is a corresponding edge in $W_1$, namely $\edge{\omega^k, \omega^{-(k-1)}}$, so $\abs{W_1}\geq \abs{W_0}$ and $W_1$ includes the edge $\edge{\omega, 1}$. 
When $N-1$ is odd, the only nonzero vertex on the real line is 1; when $N-1$ is even, both vertices $1$ and $-1$ are present. 
Consequently, $\abs{W_0}$ is determined by the number of vertices strictly in the upper half plane:
\[
\abs{W_0}=\begin{cases}
(N-2)/2 \txt{if $N-1$ is odd},\\
(N-3)/2 \txt{if $N-1$ is even}. 
\end{cases}
\]

When $N-1$ is odd, 
recall $\abs{W_1}\leq \floor{(N-1)/2}$ because $W_1$ is 1-regular, so by the above, $\abs{W_1}=\abs{W_0}$, with $W_1$ avoiding the vertex $\omega^{-(N-2)/2}=\omega^{N/2}$, as its left most edge is 
$\edge{\omega^{(N-2)/2}, \omega^{-(((N-2)/2)-1)}}$. 
Form the augmented sets $\tilde{W}_0=W_0\cup\edge{o, 1}$ and $\tilde{W}_1=W_1\cup\edge{\omega^{-(N-2)/2}, o}$; each $\tilde{W}_i$ is a \emph{1-factor}, as it is 1-regular and spans all $N$ vertices. 
We can thus form a cycle using $\tilde{W}_0$ and $\tilde{W}_1$: explicitly, in cycle notation, 
\[
(o, 1, \omega^1, \omega^{-1}, \ldots, \omega^j, \omega^{-j}, \ldots, \omega^{(N-2)/2}, \omega^{-(N-2)/2})
\]
which has length $2(N-2)/2+2=N$ as it should for covering all the $N$ vertices.

When $N-1$ is even, $\abs{W_0}=(N-3)/2 = \floor{(N-1)/2}-1$, so $W_1$ can contain at most one additional edge; it does, via $(-1,\, \omega^{-(k_\ast-1)})$ with $k_\ast=(N-1)/2$. 
If $N\geq 7$, there are at least two edges in $W_0$, so 
split $W_0$ into $W_0^+$ and $W_0^-$, corresponding to those vertices with nonnegative and strictly negative real parts respectively. 
When $\abs{W_0}=(N-3)/2$ is even, that is, $N\equiv 3\mod 4$, both $W_0^\pm$ are of the same size $(N-3)/4$, while in the other case, we have $\abs{W_0^+}=\ceil{(N-3)/4}$ and $\abs{W_0^-}=\floor{(N-3)/4}$. 
Form the augmented sets $\tilde{W}_0^+=W_0^+\cup\edge{o,1}$ and $\tilde{W}_0^-=W_0^-\cup\edge{o, -1}$; these sets are 1-regular, of sizes
\[
\abs{\tilde{W}_0^\pm}=1+\abs{W_0^\pm}=1+(N-3)/4=(N+1)/4. 
\qtext{when} N\equiv 3\mod 4, 
\]
while when $N\equiv 1\mod 4$, that is, when $(N-3)/2$ is odd, 
\[
\abs{\tilde{W}_0^-}=1+\abs{W_0^-}=1+\frac1{2}\left(\frac{N-3}{2}-1\right)
=\frac{N-1}{4}
\]
and
\[
\abs{\tilde{W}_0^+}=1+\abs{W_0^+}=1+\frac1{2}\left(\frac{N-3}{2}+1\right)
=\frac{N+3}{4}. 
\]
The sets $\tilde{W}_0^+, \tilde{W}_0^-, W_1$ now form the cycle  
\[
(o, 1, \omega^1, \omega^{-1}, \ldots, \omega^j, \omega^{-j}, \ldots, \omega^{(N-3)/2}, \omega^{-(N-3)/2}, -1), 
\]
which has length $2(N-3)/2 + 3=N$, again as it should. 

Extending the $\Z_{N-1}$ group action to send the origin $o$ to itself, 
we can thus form $\floor{(N-1)/2}$ cycles $\mathcal{C}_j$ of length $N$ using the above, recalling there are $N-1$ different $W_p$ sets. 
Explicitly, 
\begin{equation}\label{eqn:splitCycles}
\mathcal{C}_j=\begin{cases}
\tilde{W}_{2j}^+\coprod \tilde{W}_{2j}^-\coprod W_{2j+1} &\txt{if $N-1$ is even}\\
\tilde{W}_{2j}\coprod \tilde{W}_{2j+1} &\txt{if $N-1$ is odd}. 
\end{cases}
\end{equation}
When $N-1$ is even, all $\binom{N}{2}$ edges are covered, while when $N-1$ is odd, $\tilde{W}_{N-2}$ still remains, but is still a 1-factor. 

We thus have, considering the parity of $N$ instead of $N-1$, 
\begin{lem}[Walecki Construction]\label{lem:WaleckiConstruction}
The complete graph $K_N$ has a decomposition into $(N-1)/2$ cycles of length $N$ when $N$ is odd, and $(N-2)/2$ cycles of length $N$ and a 1-factor when $N$ is even. 
\end{lem}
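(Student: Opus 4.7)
The lemma is essentially a summary of the construction carried out in the paragraphs preceding it; the plan is to tie together the three pieces (partition of the $W_p$'s, cycle structure of each $\mathcal{C}_j$, orbit counting under $\Z_{N-1}$) and then bookkeep the parity cases.

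First, I would record that $\{W_p\}_{p=0}^{N-2}$ is a partition of the edge set of $K_{N-1}$ (the circle vertices): every edge $\edge{\omega^j,\omega^k}$ belongs to $W_{(j+k)\bmod (N-1)}$ and to no other $W_p$. Combined with the 1-regularity already verified inline, this pins down the sizes $\abs{W_0}$ and $\abs{W_1}$ stated in the excerpt, and in the $N-1$ even case confirms that $W_1$ absorbs the extra edge $\edge{-1, \omega^{-(k_\ast-1)}}$ so that $\abs{W_1}=\abs{W_0}+1$.

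Next, I would verify that the cycle notations displayed in the excerpt really do exhibit a single cycle on all $N$ vertices: starting from $o$ and alternating between edges of $\tilde{W}_0$ (or $\tilde{W}_0^\pm$) and $W_1$, one passes through $1,\omega,\omega^{-1},\omega^2,\omega^{-2},\ldots$ and either closes up at $\omega^{-(N-2)/2}$ via the augmenting edge (when $N-1$ is odd), or threads through $-1$ and back to $o$ via the augmenting edge $\edge{o,-1}$ (when $N-1$ is even). The key check is that the split $\tilde{W}_0^+,\tilde{W}_0^-$ together with $W_1$ genuinely concatenate into one cycle of length $N$, not two; this follows from the sign-based split, because the $o$-edges are attached at $+1$ and $-1$ respectively, and $W_1$ contains the single edge $\edge{-1,\omega^{-(k_\ast-1)}}$ crossing the real axis.

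Then I would invoke the $\Z_{N-1}$ rotation: multiplication by $\omega$ maps $W_p$ to $W_{(p+2)\bmod (N-1)}$ and fixes $o$, so it sends $\mathcal{C}_0$ to a structurally identical cycle $\mathcal{C}_j$ using the pair $(W_{2j}, W_{2j+1})$. The orbits of this action on $\{W_p\}$ pair up indices of the same parity, which is exactly the pairing used by equation~\eqref{eqn:splitCycles}. Counting the pairs gives the statement: when $N-1$ is even (i.e.\ $N$ odd) the $N-1$ sets $W_p$ split into $(N-1)/2$ pairs, yielding $(N-1)/2$ cycles of length $N$ that cover every edge of $K_N$; when $N-1$ is odd (i.e.\ $N$ even) the pairing leaves $\tilde{W}_{N-2}$ alone after forming $(N-2)/2$ cycles, and $\tilde{W}_{N-2}$ is a 1-factor because it is already 1-regular and includes an edge to $o$ by the augmentation.

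The main obstacle is purely the parity bookkeeping; in particular, checking that the sizes of $\tilde{W}_0^\pm$ add correctly to $\abs{W_0}+2$ in both subcases $N\equiv 1,3\bmod 4$, and that no edge of $K_N$ is counted twice or omitted by the union over $j$. Nothing novel is required beyond carefully reading off the indices produced by the rotation action.
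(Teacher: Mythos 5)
Your proposal is correct and follows essentially the same route as the paper: the lemma is stated there as a summary of the construction in the preceding paragraphs, and you reassemble exactly those pieces (the partition of the circle edges into the 1-regular sets $W_p$, the augmentation by edges at $o$ and concatenation into a single $N$-cycle, and the $\Z_{N-1}$ rotation together with the parity count, leaving $\tilde{W}_{N-2}$ as the residual 1-factor when $N$ is even). One small imprecision: the orbits of $p\mapsto p+2$ are the two parity classes of indices (when $N-1$ is even), whereas each cycle $\mathcal{C}_j$ pairs the even index $2j$ with the odd index $2j+1$; your subsequent counting is the correct one, so nothing substantive is affected.
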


For reference later, we also record
\begin{cor}\label{cor:WaleckiSplitCycles}
Consider the cycles in lemma~\ref{lem:WaleckiConstruction}, corresponding to equation~\eqref{eqn:splitCycles}. 
When $N$ is even, these cycles split as a pair of 1-factors of size $N/2$. 
When $N$ is odd, the cycles split as three 1-regular subgraphs when $N\geq 7$. 
When $N\equiv 3 \mod 4$, their sizes are
\[
\abs{W_{2j+1}}=(N-1)/2 \qtext{and} \abs{\tilde{W}_{2j}^\pm}=(N+1)/4, 
\]
while when $N\equiv 1 \mod 4$, their sizes are
\[
\abs{W_{2j+1}}=(N-1)/2, \quad 
\abs{\tilde{W}_{2j}^-}=(N-1)/4, \qtext{and}
\abs{\tilde{W}_{2j}^+}=(N+3)/4. 
\]
\end{cor}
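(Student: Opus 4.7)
The plan is that the corollary is essentially a bookkeeping statement: every claim is extracted from the explicit construction of the cycles $\mathcal{C}_j$ via equation \eqref{eqn:splitCycles}, so I would just verify (a) that each of the three (respectively two) pieces is 1-regular, and (b) that the cardinalities come out as stated. Item (a) follows immediately: each $W_p$ was already shown to be 1-regular in the preceding discussion, and the augmentations $\tilde{W}_p$ and $\tilde{W}_p^{\pm}$ add an edge incident to $o$ (degree 0 in $W_p$) plus one missed vertex on the circle, so every vertex touched has degree one in each piece.

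For the $N$ even case ($N-1$ odd), I would reuse the formula $|W_0| = (N-2)/2$ and observe that $W_1$ achieves the maximum $\lfloor (N-1)/2 \rfloor = (N-2)/2$ as well, so after adjoining one edge through the origin to each, both $\tilde{W}_{2j}$ and $\tilde{W}_{2j+1}$ have size $N/2$ and span all $N$ vertices; this makes them 1-factors of size $N/2$, proving the even case. I would then translate this across $j$ using the $\Z_{N-1}$-action already set up.

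For the $N$ odd case ($N-1$ even), the edge count $|W_{2j+1}| = (N-1)/2$ follows directly from the text's observation that $W_1$ picks up the extra edge $\{-1,\omega^{-(k_\ast-1)}\}$, bringing it from $(N-3)/2$ to $(N-1)/2$, and translating by $\Z_{N-1}$. To count $|\tilde{W}_{2j}^{\pm}|$, I would split $W_0 = W_0^+ \sqcup W_0^-$ by the sign of real parts. When $N\equiv 3\bmod 4$ the integer $(N-3)/2$ is even, and by the conjugate-pair description $W_0 = \{\{\omega^k,\omega^{-k}\}:1\le k\le (N-3)/2\}$ the split is balanced into halves of size $(N-3)/4$; adding the edge $\{o,\pm 1\}$ yields $(N+1)/4$ on each side. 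When $N\equiv 1\bmod 4$ the count $(N-3)/2$ is odd, and the pairs split unevenly.

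The only subtlety, which I would treat as the main bookkeeping point, is to pin down which side absorbs the extra pair in the $N\equiv 1\bmod 4$ case. I would argue that the pair $\{\omega^k,\omega^{-k}\}$ lies in $W_0^+$ precisely when $\mathrm{Re}(\omega^k)\ge 0$, i.e.\ for $k\le (N-1)/4$; counting these indices gives $|W_0^+|=(N-5)/4 + 1 = (N-1)/4-\tfrac12 + \tfrac12$ after a careful parity check, yielding the stated $|\tilde{W}_{2j}^+| = (N+3)/4$ and $|\tilde{W}_{2j}^-| = (N-1)/4$. With these cardinalities in hand, using the already-established $\Z_{N-1}$-equivariance to transport the computation from $j=0$ to general $j$, the corollary follows.
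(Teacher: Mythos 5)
Your proposal is correct and matches the paper's own derivation: the corollary is recorded directly from the counts of $|W_0|$, $|W_1|$, and the split $W_0 = W_0^+\sqcup W_0^-$ established in the construction, transported to general $j$ by the $\Z_{N-1}$ action, and your index count $k\le (N-1)/4$ gives exactly the paper's $|W_0^+|=\lceil(N-3)/4\rceil$, $|W_0^-|=\lfloor(N-3)/4\rfloor$ in the $N\equiv 1\bmod 4$ case. The only blemish is the garbled arithmetic line ``$|W_0^+|=(N-5)/4+1=(N-1)/4-\tfrac12+\tfrac12$,'' which should simply read $|W_0^+|=(N-1)/4$ (so $|\tilde W_{2j}^+|=(N+3)/4$) and $|W_0^-|=(N-5)/4$ (so $|\tilde W_{2j}^-|=(N-1)/4$).
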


Returning to the sets of difference vectors $y_{ij}=x_i-x_j\in Y$ with $X\subset\R^D$ a set of $N$ points, we can assign each $y_{ij}$ to a unique 1-regular subgraph by the correspondence $y_{ij}\leftrightarrow \edge{\omega^i, \omega^j}$ when $0<i<j$ and $y_{0j}\leftrightarrow\edge{o, \omega^j}$ for $0<j$.  
Most useful for us is the following lemma; note we shall be considering batches of size (at least) $M$ drawn from within these subgraphs. 
\begin{lem}\label{lem:IndependentWithSplitWalecki}
Let $X\subset \R^D$ be a set of $N$ points drawn $\iid$ from a common distribution. 
With the correspondence above, the vectors within each subgraph from corollary~\ref{cor:WaleckiSplitCycles} and lemma~\ref{lem:WaleckiConstruction} are independent, as are their unit norm versions. 
When $N$ is even, there are $N-1$ subgraphs involved. 
When $N\geq 7$ is odd, there are $3(N-1)/2$ subgraphs involved. 
\end{lem}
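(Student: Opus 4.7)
The plan is to leverage the \emph{1-regularity} of each of the subgraphs produced by the Walecki construction, which is the crucial combinatorial property that forces the corresponding difference vectors to depend on pairwise-disjoint subsets of the $\iid$ sample, and hence to be mutually independent.

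First I would spell out the correspondence: an edge $\edge{\omega^i,\omega^j}$ (or $\edge{o,\omega^j}$) in the 1-regular subgraph is identified with the difference vector $y_{ij}=x_i-x_j$ (or $y_{0j}=x_0-x_j$). Because the subgraphs $W_p$, $\tilde{W}_p^{\pm}$, and $\tilde{W}_p$ appearing in lemma~\ref{lem:WaleckiConstruction} and corollary~\ref{cor:WaleckiSplitCycles} are all 1-regular, each vertex is incident to \emph{at most one} edge within a given subgraph, so the vertex-sets of distinct edges of the same subgraph are disjoint. Translating back, the index sets $\{i_1,j_1\},\{i_2,j_2\},\ldots,\{i_m,j_m\}$ attached to the edges of a single subgraph are pairwise disjoint subsets of $\{0,1,\ldots,N-1\}$.

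Next I would invoke the standard fact that measurable functions of disjoint subfamilies of independent random variables are themselves independent. Since each difference vector $y_{i_k j_k}=x_{i_k}-x_{j_k}$ is a measurable function only of the pair $(x_{i_k},x_{j_k})$, and since the pairs are disjoint and the $x_i$ are $\iid$, the collection $\{y_{i_k j_k}\}_k$ is mutually independent. The same reasoning handles the unit-normalized versions, since $\wh{y}_{ij}=y_{ij}/\norm{y_{ij}}_2$ is (on the full-measure event $\{x_{i_k}\neq x_{j_k}\}$) a deterministic function of $y_{i_k j_k}$ alone.

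Finally I would count the subgraphs in the two parities. When $N$ is even, $N-1$ is odd, so lemma~\ref{lem:WaleckiConstruction} produces $(N-2)/2$ cycles and, by corollary~\ref{cor:WaleckiSplitCycles}, each splits into two 1-factors; together with the leftover 1-factor $\tilde{W}_{N-2}$, this gives $2\cdot (N-2)/2+1=N-1$ subgraphs. When $N\geq 7$ is odd, $N-1$ is even, so there are $(N-1)/2$ cycles, and corollary~\ref{cor:WaleckiSplitCycles} splits each into three 1-regular pieces $\tilde{W}_{2j}^+,\tilde{W}_{2j}^-,W_{2j+1}$, giving $3(N-1)/2$ subgraphs. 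The hypothesis $N\geq 7$ is exactly what is needed for the split of $W_0$ into $W_0^\pm$ to be nontrivial (so that corollary~\ref{cor:WaleckiSplitCycles} applies). The only step that requires care is this bookkeeping in the odd case — keeping track of which cycles split into two versus three 1-regular subgraphs — but no probabilistic difficulty is encountered.
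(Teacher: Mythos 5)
Your proof is correct and follows essentially the same route as the paper's: 1-regularity forces the edges of each subgraph to have pairwise disjoint vertex sets, so the corresponding difference vectors are functions of disjoint blocks of the $\iid$ sample and hence independent, with unit normalization preserving independence; the subgraph counts then follow directly from lemma~\ref{lem:WaleckiConstruction} and corollary~\ref{cor:WaleckiSplitCycles}. Your counting is in fact spelled out more explicitly than in the paper, which dismisses it as immediate.
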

\begin{proof}
The subgraphs are 1-regular, so each vertex corresponding to a point of $X$ appears only once; independence follows as no two distinct edges share a common vertex. 
In the unit norm case, we are applying the same function $v\mapsto v/\norm{v}_2$ to independent vectors, so the results are independent too.
The rest of the lemma is immediate. 
\end{proof}


We can now prove
\begin{thm}\label{thm:SimplexBulkJL}
For $X$ the standard simplex in $\R^D$, it suffices to take 
\[
k=\frac{2C_{\lbrack\ref{thm:FreeDecompBulkJL}\rbrack}}{\epsilon^2}
    \left(\log(2e/\eta) + \frac{\log(D/\delta)}{\max(\eta D, 1)}\right)
\]
for theorem~\ref{thm:UnitFreeDecompBulkJL}. 
\end{thm}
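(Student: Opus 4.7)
The plan is to apply Theorem~\ref{thm:UnitFreeDecompBulkJL} with $N = D$ and batch size $M = D$, using a Walecki-type decomposition of the normalized difference vectors $(e_i - e_j)/\sqrt{2}$, viewed as edges of the complete graph $K_D$ on the vertex set $\{e_1,\ldots,e_D\}$. Lemma~\ref{lem:WaleckiConstruction} yields a decomposition $\decomp$ consisting of $(D-1)/2$ cycles of length $D$ when $D$ is odd, or $(D-2)/2$ cycles of length $D$ together with one leftover $1$-factor of size $D/2$ when $D$ is even; in the even case I would absorb the $1$-factor into one of the cycles using the expansion construction preceding equation~\eqref{eqn:EtaTilde}, so every batch has size $M$ or $M+s$ with $s = D/2$, and $\tilde\eta$ from~\eqref{eqn:EtaTilde} handles the integrality requirement on $\eta(M+s)$.

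Next I would compute $\wh{R}_\infty(M)$. For each length-$D$ cycle, after appropriate orientation of edges, the matrix $\wh{\Upsilon}$ of unit difference vectors is, up to row permutation, the circulant matrix of Example~\ref{eg:BatchChoiceMattersStableRankSimplex} scaled by $1/\sqrt{2}$. That example computes $\sigma_j^2(\Upsilon) = 2 - 2\cos(2\pi(j-1)/D) \leq 4$, and summing gives $\|\wh{\Upsilon}\|_F^2 = D$ (one may also read this off directly as the number of unit rows), so $r_\infty(\wh{\Upsilon}) \geq D/2$. For the $1$-factor alone, its unit difference vectors are pairwise orthogonal since no two edges of a matching share a vertex, so its matrix has orthonormal rows and $r_\infty = D/2$; after absorbing the $1$-factor into a cycle, I would argue $r_\infty \geq D/2$ still holds for the enlarged batch by bounding its operator norm by the maximum of the two component operator norms via a block-row argument (both components have unit-norm rows, so the Frobenius norm is simply additive). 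Hence $\wh{R}_\infty(M) \geq D/2$ throughout $\decomp$.

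Finally I would plug $M = D = N$ and $\wh{R}_\infty(M) \geq D/2$ into Theorem~\ref{thm:UnitFreeDecompBulkJL}. The first term becomes $\log(2e/\eta)\cdot M/\wh{R}_\infty(M) \leq 2\log(2e/\eta)$, and in the second term $\log(N^2/(M\delta)) = \log(D/\delta)$. A short case check ($\eta D \geq 2$ versus $\eta D < 2$) yields $\max(\eta D/2,\,1) \geq \tfrac12\max(\eta D,\,1)$, so the second term is bounded by $2\log(D/\delta)/\max(\eta D, 1)$. Combining, the sufficient $k$ is at most $2C_{[\ref{thm:FreeDecompBulkJL}]}/\epsilon^2$ times $\log(2e/\eta) + \log(D/\delta)/\max(\eta D,1)$, matching the claimed bound. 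The main friction points are bookkeeping in the even-$D$ case (ensuring that absorbing the leftover $1$-factor into a cycle does not degrade the stable-rank bound and that the expansion mechanism around~\eqref{eqn:EtaTilde} absorbs it cleanly) and verifying the denominator comparison; both are routine given the machinery of Section~\ref{sec:OrderStatControl} and Example~\ref{eg:BatchChoiceMattersStableRankSimplex}.
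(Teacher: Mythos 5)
Your proof follows the paper's argument essentially verbatim: take $M=D$ in Theorem~\ref{thm:UnitFreeDecompBulkJL}, use the Walecki decomposition, obtain $\wh{R}_\infty(D)\geq D/2$ from the circulant computation of Example~\ref{eg:BatchChoiceMattersStableRankSimplex} for the cycles and from orthogonality for the even-$D$ 1-factor, and finish with the comparison $\max(\eta D/2,1)\geq \tfrac12\max(\eta D,1)$. The one deviation is that you merge the leftover 1-factor into a cycle (the paper simply keeps it as its own, smaller batch), and your intermediate claim that the operator norm of the concatenation is bounded by the maximum of the two blocks' operator norms is false in general --- the correct inequality is $\norm{(A\,|\,B)}^2\leq \norm{A}^2+\norm{B}^2$ --- but with that correction the merged batch still satisfies $r_\infty\geq (D+D/2)/(2+1)=D/2$, so your conclusion is unaffected.
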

Note $k$ is bounded independent of $D$ as soon as $\eta D\gtrsim \log(D/\delta)$. 
\begin{proof}
Taking $M=D$ in theorem~\ref{thm:UnitFreeDecompBulkJL}, we can use the Walecki construction~\ref{lem:WaleckiConstruction} as is to control $\wh{R}_\infty(D)$. 
When $D$ is odd, the computations from example~\ref{eg:BatchChoiceMattersStableRankSimplex} show $\wh{R}_\infty(D)\geq D/2$, while when $D$ is even, the 1-factor from the Walecki construction is of size $D/2$, with mutually orthogonal vectors of constant norm, so its stable rank is equal to its rank, $D/2$. 
Thus $\wh{R}_\infty(D)\geq D/2$ for both parities of $D$. 
\end{proof}

\section{Further Theorems for i.i.d. Samples}\label{sec:IID}
Let $\xi\in\R^D$ be a given random vector. 
In this section, the following assumption~\ref{AssumptionXZ} will be in play for the $k\times D$ dimension reduction matrix $Z$ and the dataset $X$ of $N$ points in $\R^D$. 
The theorems will then be in terms of additional assumptions on the distribution of $\xi$. 
\begin{assumption}\label{AssumptionXZ}
The matrix $Z$ has $\iid$ zero-mean unit-variance sub-gaussian entries. 
The dataset $X=\set{x_i}_{i=0}^{N-1}\subset\R^D$ is drawn independently of $Z$, with $x_i\overset{\iid}{\drawn} \xi$. 
\end{assumption}
In practice, datasets need not be drawn $\iid$ from some underlying distribution; however, if the number of points $N$ is very large, it may be useful or convenient to subsample the data in order to fit it in memory or to try to estimate properties of the data. 
The theorems in this section may then be read as applying to an $\iid$ (sub)sample of the data, that is, drawing $N$ points uniformly with replacement from the underlying dataset, redefining $N$ to be the new sample size, and redefining $\xi$ to be drawn from the discrete uniform measure on the underlying data.

With the Walecki construction in hand and assumption~\ref{AssumptionXZ}, we can give refinements of theorems~\ref{thm:FreeDecompBulkJL} and~\ref{thm:UnitFreeDecompBulkJL}. 
The theorem statements will have failure probabilities in terms of the draw of the pair $(Z,X)$. 
(There should be no confusion with the dot product notation.) 
Both of the theorems just mentioned give a failure probability $\delta_Z$ for $Z$ once $X$ is fixed, and this probability only depends on stable rank properties of the data set $X$, or more accurately, the difference vector set $Y$. 
These theorems in turn rely on lemma~\ref{lem:BulkHWOrderStat}, that bounds the probabilities in equation~\eqref{eqn:OrderStatEventBatchLevel} for $Z$ acting on a given batch $\Upsilon$ or its unit norm version in terms of stable ranks of minibatches $\Upsilon(\eta)$.  
In many of the examples that follow, the assumptions on the data only guarantee $r_\infty(\Upsilon)$ or $r_\infty(\Upsilon(\eta))$ is above some threshold most of the time, say for a fraction $(1-\zeta)$ of all batches, instead of all of the time. 
We can use lemma~\ref{lem:BulkHWOrderStat} on those \qt{good} batches, and still conclude that $(1-\zeta)(1-2\eta)$ of all distances are approximately preserved with high probability. 
To be concrete, let $\ev_Z$ be the event that $\norm{Z\wh{y}}_2^2$ is approximately preserved for $(1-\zeta)(1-2\eta)$ of the vectors $y\in Y$ --
provided the batches considered have $r_\infty(\Upsilon)\geq r_0$, for some threshold $r_0$. 
Let $\ev_X$ be the event that $r_\infty(\Upsilon)\geq r_0$ for at least $(1-\zeta)$ of the batches considered in the decomposition $\decomp$. 
We then have
\begin{align*}
\Prob_{Z\times X}\set{\ev_Z^c}
&=\Prob_{Z\times X}\set{\ev_Z^c\cap \ev_X}
    +\Prob_{Z\times X}\set{\ev_Z^c\cap \ev_X^c}\\
&=\Prob_{Z\times X}\set{\ev_Z^c\st\ev_X}\Prob_X\set{\ev_X}
    +\Prob_{Z\times X}\set{\ev_Z^c\cap \ev_X^c}\\
&\leq \Prob_{Z\times X}\set{\ev_Z^c\st\ev_X}+\Prob_X\set{\ev_X^c}\\
&\leq \delta_Z+\delta_X
\end{align*}

In the rest of this section, we use the 1-regular subgraph version of the Walecki construction, lemma~\ref{lem:IndependentWithSplitWalecki} 
to define the decomposition $\decomp$ and the underlying batches $\Upsilon$. 
Specifically, each 1-regular subgraph $W$ is at least of size $(N-1)/4$, and we partition the subgraph edges into batches $\Upsilon$ of size $M$ -- each edge corresponding to a difference vector. 
Under assumption~\ref{AssumptionXZ}, the edges within each subgraph are exchangeable, so they can be assigned to batches in an arbitrary manner as long as the batch size is respected.  
For any remainder when $M$ does not divide the subgraph size $\abs{W}$, 
we can modify the definition of $\tilde{\eta}$ from equation~\eqref{eqn:EtaTilde} to apply with $\abs{W}$ in place of $N$; any of the expanded batches are still of size at most $2M-1$. 
Note when $N$ is odd, the subgraphs are not all of the same size, so the $\tilde{\eta}$ will vary accordingly.

We first present one case where $\zeta$ is 0, that is, we are able to control $r_\infty(\Upsilon(\eta))$ for every minibatch, with high probability. 
\begin{thm}\label{thm:SubGaussianIIDCoordsBulkJL}
Under assumption~\ref{AssumptionXZ}, equation~\eqref{eqn:JLUnsquaredDistancesPairwise} holds
for at least $(1-2\eta)\binom{N}{2}$ pairs $x,x'\in X$, with probability at least $1-3\,\delta$ over the draw of $(Z,X)$, provided
\[
k\geq \frac{C_{\ref{thm:FreeDecompBulkJL}}}{\epsilon^2}\frac{4C^2\norm{\xi_1}_{\psi_2}^2(1+(1+\alpha)^2)}{(1-t)}\left(\log(2e/\eta)+\frac{\log(N^2/(D\delta))}{\eta D}\right)
\]
when $\eta D$ is a strictly positive integer, with 
\[
N\geq D\geq \max\left(\frac1{\alpha^2}\log(N^2/(D\delta)),\,
\frac{(2\norm{\xi_1}_{\psi_2}^2+1/\log(2))^2}{ct^2}
    \left(\frac{\log(N^2/(D\delta))}{\eta D}+\log(2e/\eta)\right)
\right). 
\]
\end{thm}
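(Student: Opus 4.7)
The plan is to invoke theorem~\ref{thm:FreeDecompBulkJL} with $M=D$, after showing that the minimum minibatch stable rank $R_\infty(\eta D;\decomp)\gtrsim \eta D$ with high probability over $X$. Because $\xi$ has iid sub-gaussian coordinates (the hypothesis implicit in the theorem's name and in the notation $\norm{\xi_1}_{\psi_2}$), I would build $\decomp$ via the Walecki construction (lemmas~\ref{lem:WaleckiConstruction} and~\ref{lem:IndependentWithSplitWalecki}) so that the difference vectors within any $1$-regular subgraph are independent, then partition each subgraph into batches of size $M=D$ (using $\tilde\eta$ of equation~\eqref{eqn:EtaTilde} to absorb any leftover edges). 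Each batch $\Upsilon$ is then a $D\times D$ matrix whose entries are iid mean-zero, variance-$2$ sub-gaussians with $\psi_2$-norm at most $2\norm{\xi_1}_{\psi_2}$, and the goal reduces to upper-bounding $\norm{\Upsilon}$ and lower-bounding $\norm{\Upsilon(\eta)}_F^2$ uniformly in batches and minibatches.

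For the operator norm, standard concentration for iid sub-gaussian matrices gives
\[
\norm{\Upsilon}^2\leq C^2\norm{\xi_1}_{\psi_2}^2(1+(1+\alpha)^2)\,D
\]
with per-batch failure probability at most $\exp(-c\alpha^2 D)$; a union bound over the $\floor{\binom{N}{2}/D}\leq N^2/(2D)$ batches is absorbed by $\delta$ once $\alpha^2 D\geq \log(N^2/(D\delta))$, the first stated condition on $D$. For the Frobenius lower bound I would unfold
\[
\norm{\Upsilon(\eta)}_F^2=\sum_{y\in\Upsilon(\eta)}\sum_{k=1}^D (\xi_{i(y)k}-\xi_{j(y)k})^2
\]
as a sum of $\eta M\cdot D$ iid mean-$2$ sub-exponentials (using column-independence within the subgraph) and apply Bernstein's inequality with $K_0:=2\norm{\xi_1}_{\psi_2}^2+1/\log 2$ controlling $\norm{(\xi_1-\xi_1')^2-2}_{\psi_1}$:
\[
\Prob\set{\norm{\Upsilon(\eta)}_F^2<2\eta M D(1-t)}\leq \exp\left(-\frac{ct^2\eta MD}{K_0^2}\right).
\]
Union-bounding over the $\binom{M}{\eta M}\leq (2e/\eta)^{\eta M}$ minibatches per batch together with the $N^2/(2M)$ batches is absorbed by $\delta$ exactly when $ct^2\eta D^2/K_0^2\geq \eta D\log(2e/\eta)+\log(N^2/(D\delta))$; dividing by $\eta D$ produces the second stated condition on $D$.

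On the intersection of these two $X$-events (probability $\geq 1-2\delta$), every minibatch satisfies $r_\infty(\Upsilon(\eta))\geq c_0\,\eta M$ with $c_0=2(1-t)/(C^2\norm{\xi_1}_{\psi_2}^2(1+(1+\alpha)^2))$, so $R_\infty(\eta D;\decomp)\geq c_0\eta D$. Conditional on this good $X$-event, theorem~\ref{thm:FreeDecompBulkJL} applied with this lower bound on $R_\infty$ delivers~\eqref{eqn:JLUnsquaredDistancesPairwise} for $(1-2\eta)\binom{N}{2}$ pairs with an additional $\delta$ failure over $Z$; the law of total probability bundles this into $3\delta$ over $(Z,X)$, and substituting $R_\infty\geq c_0\eta D$ into the $k$-bound of theorem~\ref{thm:FreeDecompBulkJL} (absorbing $2/c_0$ into the absolute constant) reproduces the stated formula. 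The principal obstacle is the combinatorial blow-up in the Frobenius step: the $(2e/\eta)^{\eta M}$ union bound over minibatches is what forces the second condition on $D$, and identifying the $\psi_1$ constant cleanly as $2\norm{\xi_1}_{\psi_2}^2+1/\log 2$ requires a careful $\psi_1$-centering combined with a variance-proxy (rather than triangle-inequality) bound on $\norm{\xi_1-\xi_1'}_{\psi_2}$.
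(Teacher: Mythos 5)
Your proposal matches the paper's argument step for step: Walecki batches of size $M=D$ with independent columns, a Bernstein bound for the minibatch Frobenius norm paired with a $\binom{M}{\eta M}$ union bound (producing the second constraint on $D$), a sub-gaussian operator-norm bound for the full batch paired with a union bound over $\lfloor\binom{N}{2}/D\rfloor$ batches (producing the first constraint on $D$), and then conditioning on these two $X$-events before invoking theorem~\ref{thm:FreeDecompBulkJL}. The one cosmetic difference is that the paper first rescales $Y$ by $1/\sqrt2$ so each entry $\Upsilon_{11}=(\xi_1-\xi_1')/\sqrt2$ has unit variance and $\norm{\Upsilon_{11}}_{\psi_2}\leq\sqrt2\,\norm{\xi_1}_{\psi_2}$ by the triangle inequality; the constant $K_0=2\norm{\xi_1}_{\psi_2}^2+1/\log 2$ then falls out of the plain $\psi_1$-centering $\norm{\Upsilon_{11}^2-1}_{\psi_1}\leq\norm{\Upsilon_{11}}_{\psi_2}^2+1/\log 2$ with no need for the ``variance-proxy'' refinement you flagged as an obstacle.
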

To make some sense of the above, note that if $N\geq D\gtrsim k$, 
it suffices to take $t=\epsilon$ and 
\[
k\geq \frac{C_{\ref{thm:FreeDecompBulkJL}}}{\epsilon^2}
    \frac{4C^2\norm{\xi_1}_{\psi_2}^2(1+(1+\alpha)^2)}{(1-\epsilon)}
    \left(\log(2e/\eta)+\frac{\alpha^2}{\eta}\right)
\]
recalling $D\geq \alpha^{-2}\log(N^2/(D\delta))$ too. 
This target dimension is roughly the same as for the simplex~\ref{thm:SimplexBulkJL} with $D=N$ there, despite the very different sparsity properties of these two datasets. 
\begin{proof}
With $X$ defined as in the theorem statement and $\xi'$ an independent draw of $\xi$, the difference vector set $Y$ is drawn $\iid$ from $\xi-\xi'$, so it is immediately mean-zero with $\iid$ coordinates. 
Because the stable ranks do not see constant scaling, we can work with $Y/\sqrt{2}$, so that $(\xi-\xi')/\sqrt{2}$ now has unit variance coordinates.
The sub-gaussian norm for a coordinate $(\xi_1-\xi_1')/\sqrt{2}$ of $y\in Y/\sqrt{2}$ is at most $\sqrt{2}\norm{\xi_1}_{\psi_2}$ by the triangle inequality. 

We control $R_\infty(\eta M)$ in theorem~\ref{thm:FreeDecompBulkJL} by showing each batch $\Upsilon(\eta)$ has high stable rank. 
Because $r_\infty(\Upsilon(\eta))=\norm{\Upsilon(\eta)}_F^2/\norm{\Upsilon(\eta)}^2$, we shall control numerator and denominator separately. 

We have $\E\norm{\Upsilon(\eta)}_F^2 = D\eta M$. 
Further, Bernstein's inequality for the mean-zero sub-exponential random variables $\Upsilon_{ij}^2-1$ yields~\cite[page~34]{VershyninHDP2018}
\[
\Prob\set{\abs{\frac1{D\eta M}\norm{\Upsilon(\eta)}_F^2-1}\geq t}
\leq 2\exp\left(-c\min\left(\frac{t^2}{K^2}, \frac{t}{K}\right)D\eta M\right)
\]
with $K=\norm{\Upsilon_{11}^2-1}_{\psi_1}$. 
For us with $t\in (0,1)$, the above gives
\[
\Prob\set{(1-t)D\eta M\geq \norm{\Upsilon(\eta)}_F^2}
    \leq 2\exp\left(-\frac{cD \eta M}{\max(K^2,K)}t^2\right). 
\]
To lock down $K$, recall $\norm{a}_{\psi_1}=\abs{a}/\log(2)$ for constants $a$, so 
\begin{align*}
K=\norm{\Upsilon_{11}^2-1}_{\psi_1}\leq \norm{\Upsilon_{11}^2}_{\psi_1}+\norm{\E\Upsilon_{11}^2}_{\psi_1}
\leq \norm{\Upsilon_{11}}^2_{\psi_2}+\frac{\E \Upsilon_{11}^2}{\log(2)}
=2\norm{\xi_1}^2_{\psi_2}+\frac1{\log(2)}. 
\end{align*}
Recall lemma~\ref{lem:StableRanksOfSubsets} which allowed us to state $r_\infty(\wh{\Upsilon}(\eta))\geq \eta r_\infty(\wh{\Upsilon})$ always, because the columns of $\wh{\Upsilon}$ had constant norm. 
We can give a high probability replacement for that lemma in this context, but require that it must hold over all $\floor{\binom{N}{2}/M}<N^2/(2M)$ batches $\Upsilon$, that is, 
\begin{align*}
\frac{2M\delta}{N^2}&\geq 
    \binom{M}{\eta M}2\exp\left(-\frac{cD \eta M}{\max(K^2,K)}t^2\right)\\
\log(M\delta/N^2)&\geq \log\binom{M}{\eta M}-\frac{cD \eta M}{\max(K^2,K)}t^2\\
\frac{cD \eta M}{\max(K^2,K)}t^2&\geq \log(N^2/(M\delta))+\log\binom{M}{\eta M}. 
\end{align*}
Taking into account the $\tilde{\eta}$ cases, as mentioned before this proof, we require
\[
D\geq \frac{(2\norm{\xi_1}_{\psi_2}^2+1/\log(2))^2}{ct^2}
    \left(\frac{\log(N^2/(M\delta))}{\eta M}+\log(2e/\eta)\right). 
\]
For ambient dimensions $D$ at least this large, 
every single batch $\Upsilon$ satisfies
\[
r_\infty(\Upsilon(\eta))\geq \frac{(1-t)D\eta M}{\norm{\Upsilon(\eta)}^2}
\geq (1-t)\eta \frac{DM}{\norm{\Upsilon}^2}
\]
with total failure probability at most $\delta$. 

We now only need to control $\norm{\Upsilon}^2$ instead of $\norm{\Upsilon(\eta)}^2$, and we can use the known result~\cite[page~85]{VershyninHDP2018} for matrices with mean-zero independent sub-gaussian entries -- recalling the $\psi_2$-norm of each entry is now $2\norm{\xi_1}_{\psi_2}$, as mentioned above -- 
\[
\norm{\Upsilon}\leq C(2\norm{\xi_1}_{\psi_2})(\sqrt{D}+\sqrt{M}+s)
\]
with probability at least $1-2\exp(-s^2)$. 
It is proved via an $\epsilon$-net argument. 
For us, with $s=\alpha\sqrt{D}$, the above gives
\[
\norm{\Upsilon}^2>4C^2\norm{\xi_1}_{\psi_2}^2(M+(1+\alpha)^2D) 
\]
with probability at most $2\exp(-\alpha^2 D)$. 
Consequently, 
\begin{equation}\label{eqn:RinftyAtetaSubGaussian}
r_\infty(\Upsilon(\eta))\geq \frac{(1-t)\eta DM}{4C^2\norm{\xi_1}_{\psi_2}^2(M+(1+\alpha)^2D)}
=\frac{(1-t)\eta M}{4C^2\norm{\xi_1}_{\psi_2}^2((M/D)+(1+\alpha)^2)}
\end{equation}
over all minibatches $\Upsilon(\eta)$ from all batches $\Upsilon$ 
with total failure probability at most 
\[
\frac{N^2}{2M}2\exp(-\alpha^2 D)+\delta
\leq 2\delta
\qtext{provided} D\geq \frac1{\alpha^2}\log(N^2/(M\delta)) \qtext{as well.} 
\]
So we may take the right hand side of equation~\eqref{eqn:RinftyAtetaSubGaussian} as our $R_\infty(\eta M)$ value. 
Plugging into theorem~\ref{thm:FreeDecompBulkJL} yields
\[
k\geq \frac{C_{\ref{thm:FreeDecompBulkJL}}}{\epsilon^2}\frac{4C^2\norm{\xi_1}_{\psi_2}^2((M/D)+(1+\alpha)^2)}{(1-t)}\left(\log(2e/\eta)+\frac{\log(N^2/(M\delta))}{\eta M}\right).
\]
Setting $M=D$ completes the proof. 
\end{proof}


\subsection{Controlling \qt{Most} Batches}
Unlike in theorem~\ref{thm:SubGaussianIIDCoordsBulkJL} above, in general the dataset $X$ need not be so well-behaved, to the point that controlling $r_\infty(\Upsilon(\eta))$ is not possible across all minibatches. 
To make things more manageable, we shall now work with theorem~\ref{thm:UnitFreeDecompBulkJL}, using the unit normalized batches $\wh{\Upsilon}$, working on batches of size at least $M$ instead of $\eta M$. 
For general random vectors $\xi$, we shall not be able to guarantee that $\wh{\Upsilon}$ has high stable rank, but we shall guarantee that a fraction $(1-\zeta)$ of the batches have stable rank comparable 
to a \qt{typical} value associated with $\xi$. 

The columns of $\wh{\Upsilon}$ have constant unit norm, so it is enough to control $\norm{\wh{\Upsilon}}^2=\norm{\wh{\Upsilon}\wh{\Upsilon}^\top}$ in order to control $r_\infty(\wh{\Upsilon})$. 
It will be convenient to introduce some new notation for this purpose; we present it first in its unnormalized version. 

Because we are still using the Walecki construction via lemma~\ref{lem:IndependentWithSplitWalecki} to define the batches, 
the columns in $\Upsilon$ are independent, each drawn like the given random vector $y:=\xi-\xi'\in \R^D$, with $\xi'$ an independent copy of $\xi$. 
The corresponding second moment matrix $\Sigma:=\Sigma(y):=\E yy^\top$ 
is twice the covariance matrix for $\xi$, for if $\E \xi=\mu$, 
\begin{align*}
\E yy^\top &= \E(\xi-\xi')(\xi-\xi')^\top = 2(\Sigma(\xi)-\mu\mu^\top)
=2(\E(\xi-\mu)(\xi-\mu)^\top)
=2\,\Cov(\xi). 
\end{align*}
Consequently, $r(y)$ is the effective rank of $\Cov(\xi)$, as the factors of 2 will cancel in the ratio. 

Now $\Sigma(y)$ may be approximated by its empirical version
\[
\Sigma_M:=\frac1{M}\Upsilon\Upsilon^\top = \frac1{M}\sum_{i=1}^M y_iy_i^\top. 
\qtext{Recall also}
\E\norm{y}_2^2 = \E\tr{y^\top y}=\E\tr{yy^\top}=\tr\Sigma. 
\]
The unit normalized versions will depend on $\wh{y}$, with $\wh{\Sigma}:=\E\wh{y}\wh{y}^\top$, $M\wh{\Sigma}_M:=\wh{\Upsilon}\wh{\Upsilon}^\top$, and 
$\wh{r}:=r(\wh{y})=1/\norm{\wh{\Sigma}}$. 
The connection between $r$ and $\wh{r}$ is not so immediate, but we shall return to this point in section~\ref{sec:Retraction}. 

The following is implicit in~\cite[section~5.6]{VershyninHDP2018}; 
we include the proof here, as we want explicit constants, and we plan to take $K=1$, applying it to $\wh{\Upsilon}$.
Controlling the deviation $\norm{\wh{\Sigma}_M-\wh{\Sigma}}$ will be the way we eventually show $r_\infty(\wh{\Upsilon})\gtrsim\wh{r}$ \qt{most} of the time. 
\begin{lem}\label{lem:RelativeErrorCovarSigmaOneBoundedCase}
Let $\Upsilon=\set{y_i}_{i=1}^M\subset\R^D$ be as above, with the $y_i\overset{\iid}{\drawn} y$, and assume for some $K\geq 1$, that 
\[
\norm{y}_2^2\leq K^2\E\norm{y}_2^2 \qtext{almost surely.}
\]
Then, 
with failure probability at most $2e^{-u}$, 
\[
\frac{\norm{\Sigma_M-\Sigma}}{\norm{\Sigma}}
\leq \left(\frac{(4K^2/3) r(u+\log D)}{M}
    +\sqrt{\frac{2K^2 r(u+\log D)}{M}}\right)
\qtext{for $r=r(y)=\frac{\tr\Sigma(y)}{\norm{\Sigma(y)}}$.} 
\]
\end{lem}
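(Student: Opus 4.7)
The natural tool here is the matrix Bernstein inequality applied to the centered rank-one summands $X_i := y_iy_i^\top - \Sigma$, so that $\Sigma_M-\Sigma = \frac{1}{M}\sum_{i=1}^M X_i$ is an average of i.i.d.\ mean-zero symmetric $D\times D$ matrices. The two ingredients I need for Bernstein are (i) a uniform operator-norm bound on $X_i$ and (ii) a bound on the matrix variance $\bigl\|\sum_i \E X_i^2\bigr\|$, each expressed in terms of the parameters $K$, $r$, and $\|\Sigma\|$ that appear in the statement.

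For the uniform bound, I use $\|y_iy_i^\top\| = \|y_i\|_2^2 \leq K^2\E\|y\|_2^2 = K^2 \tr\Sigma = K^2 r\|\Sigma\|$ almost surely, and then the triangle inequality gives $\|X_i\| \leq \|y_iy_i^\top\| + \|\Sigma\| \leq 2K^2 r\|\Sigma\|$ using $K^2 r \geq 1$. For the variance, expanding and using $\E y_iy_i^\top = \Sigma$ gives $\E X_1^2 = \E[(y_1y_1^\top)^2] - \Sigma^2$. The key identity is $(yy^\top)^2 = \|y\|_2^2\, yy^\top$, so in the PSD order,
\[
\E X_1^2 \preceq \E \bigl[\|y_1\|_2^2\, y_1y_1^\top\bigr] \preceq K^2 r\|\Sigma\| \cdot \E y_1y_1^\top = K^2 r\|\Sigma\|\,\Sigma,
\]
yielding $\|\E X_1^2\| \leq K^2 r\|\Sigma\|^2$, and therefore $\bigl\|\sum_{i=1}^M \E X_i^2\bigr\| \leq MK^2 r\|\Sigma\|^2$.

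Feeding these into the Bernstein tail bound
\[
\Prob\!\left\{\bigl\|{\textstyle\sum X_i}\bigr\| \geq t\right\} \leq 2D \exp\!\left(-\frac{t^2/2}{MK^2 r\|\Sigma\|^2 + (2K^2 r\|\Sigma\|)\,t/3}\right),
\]
and specializing to $t = sM\|\Sigma\|$ so that the event becomes $\|\Sigma_M-\Sigma\|/\|\Sigma\| \geq s$, the exponent collapses to $-\tfrac{s^2 M}{2K^2 r(1 + 2s/3)}$. Setting this $\leq -(u+\log D)$ and writing $U := u + \log D$ produces the quadratic condition
\[
s^2 \,M \;\geq\; 2K^2 r U \;+\; \tfrac{4K^2 r U}{3}\, s.
\]

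The final step is to solve this quadratic for $s$. Using the elementary inequality $\sqrt{a^2 + 4b} \leq a + 2\sqrt{b}$ (valid for $a,b\geq 0$), I can verify that the choice
\[
s \;=\; \frac{(4K^2/3)\, r(u+\log D)}{M} \;+\; \sqrt{\frac{2K^2 r(u+\log D)}{M}}
\]
satisfies the quadratic, which gives exactly the claimed bound with total failure probability $\leq 2e^{-u}$ after absorbing the dimensional factor $2D$ into $-\log D$ in the exponent. The main subtlety to keep track of is honestly just the constants: the $4/3$ and $\sqrt{2}$ must line up with the choices one makes in the $B$-bound and the variance bound, and the $\log D$ has to be traced through the $2D$ factor from matrix Bernstein; no step is genuinely hard, but miscounting a factor of $2$ anywhere in the variance computation or the bound $\|X_i\|\leq 2K^2r\|\Sigma\|$ will visibly distort the stated inequality.
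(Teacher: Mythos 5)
Your proof is correct and follows essentially the same route as the paper's: matrix Bernstein applied to the i.i.d.\ centered summands $y_iy_i^\top-\Sigma$, with $b\leq 2K^2\tr\Sigma = 2K^2 r\|\Sigma\|$ from the almost-sure bound and $\|\E X_1^2\|\leq K^2 r\|\Sigma\|^2$ from the identity $(yy^\top)^2=\|y\|_2^2\,yy^\top$, followed by solving the resulting quadratic in the deviation parameter using the subadditivity $\sqrt{a^2+4b}\leq a+2\sqrt{b}$. Your rescaling to $s=t/(M\|\Sigma\|)$ is a cosmetic reparametrization of the paper's choice $t=\tfrac{2sb}{3}+\sigma\sqrt{2s}$, and the constants line up exactly.
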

\begin{proof}
Because the matrices $y_iy_i^\top -\Sigma$ are symmetric, $\iid$, and mean-zero, we can use the matrix Bernstein inequality: for $t\geq 0$, 
\[
\Prob\set{\norm{\sum_{i=1}^M(y_iy_i^\top -\Sigma)}\geq t}
\leq 2D\exp\left(-\frac{t^2/2}{\sigma^2+bt/3}\right)
\]
with $b\geq \norm{y_iy_i^\top -\Sigma}$ and $\sigma^2=\norm{\sum_{i=1}^M \E(y_iy_i^\top -\Sigma)^2}$. 
We want the right hand side to be at most $2e^{-u}$, that is, 
\[
s=\log(D)+u\leq \frac{t^2/2}{\sigma^2+bt/3}, \qtext{or}
0\leq t^2-\frac{2sb}{3}t-2\sigma^2s. 
\]
The positive root is at
\[
t^\ast=\frac1{2}\left(\frac{2sb}{3}+\sqrt{\Big(\frac{2sb}{3}\Big)^2+8\sigma^2 s}\right). 
\]
Because the other root of the quadratic is negative, taking any $t\geq t^\ast$ will suffice for us. 
Because the square root function is subadditive, it is safe to take $t=(2sb/3)+\sigma\sqrt{2s}$. 

We now just need to estimate $b$ and $\sigma^2$. 
Estimate $b$ via
\begin{align*}
\norm{y_iy_i^\top -\Sigma}&\leq \norm{y_iy_i^\top}+\norm{\Sigma}
=\norm{y_i}_2^2+\norm{\Sigma}\\
&\leq K^2\tr\Sigma+\norm{\Sigma}
\leq 2K^2\tr\Sigma=:b. 
\end{align*}
For $\sigma^2$, the $\iid$ assumption already gives 
$\sigma^2=\norm{M\E(yy^\top-\Sigma)^2}$. 
Expanding the square, 
\[
\E(yy^\top -\Sigma)^2=\E(yy^\top)^2-\Sigma^2, 
\]
while $(yy^\top)^2=y(y^\top y)y=\norm{y}_2^2 yy^\top$. 
Taking expectations on $v^\top (yy^\top)^2 v$ with $v\in S^{D-1}$ gives
\[
v^\top \E(yy^\top)^2 v=\E\norm{y}_2^2 v^\top yy^\top v
\leq K^2(\tr\Sigma)\E v^\top yy^\top v=K^2(\tr \Sigma)v^\top \Sigma v. 
\]
Taking the maximum over $v\in S^{D-1}$ gives
\[
\norm{\E(yy^\top)^2}\leq K^2\tr\Sigma\norm{\Sigma} 
\qtext{and}
\sigma^2\leq MK^2\tr\Sigma\norm{\Sigma}, 
\]
as $-\Sigma^2$ is negative semidefinite. 

Recalling our choice of $t$ and that $r=\tr\Sigma/\norm{\Sigma}$, we find
\begin{align*}
t&=\frac{2sb}{3}+\sigma\sqrt{2s}\\
&=\frac{4}{3}K^2 \tr\Sigma (u+\log D)+\sqrt{2(u+\log D)M K^2 \tr\Sigma\norm{\Sigma}}\\
&=M\norm{\Sigma}\left(\frac{(4K^2/3)r(u+\log D)}{M}+\sqrt{\frac{2K^2 r(u+\log D)}{M}}\right). 
\end{align*}
With this $t$ value, we finally have
\[
\Prob\set{\norm{\Sigma_m-\Sigma}\geq \frac{t}{M}}\leq 2e^{-u}, \qtext{as desired.} \qedhere
\]
\end{proof}

If we used the unit normalized $\wh{\Upsilon}$, lemma~\ref{lem:RelativeErrorCovarSigmaOneBoundedCase} provides the following upper tail probability 
\begin{equation}\label{eqn:UpperTailCovarSigmaOne}
P_X\set{\norm{\wh{\Sigma}_M-\wh{\Sigma}}>\tau_+(u)}\leq 2\exp(-u)
\end{equation}
with $\tau_+(u)$ a function of $u$. 
By the triangle inequality, we should immediately have
\[
\norm{\wh{\Upsilon}}^2=\norm{M\wh{\Sigma}_M}\leq M\norm{\wh{\Sigma}_M-\wh{\Sigma}}+M\norm{\wh{\Sigma}}
\leq M\norm{\wh{\Sigma}}(1+\tau_+(u)), 
\]
so that $r_\infty(\wh{\Upsilon})\geq \wh{r}/(1+\tau_+(u))$
with failure probablity $2\exp(-u)$. 
This tail probability is too weak to control $r_\infty(\wh{\Upsilon})$ for every batch with high probability, because $u$ will need to be too large to be useful, so we allow a fraction $\zeta>0$ of the batches to fail. 
We consider order statistics of real-valued functions of $\Upsilon$ across batches, namely $\norm{\wh{\Sigma}_M-\wh{\Sigma}}$ -- 
because the batches within each 1-regular subgraph are independent, we can 
exploit that independence to inform the choice for $u$ using the following lemma.   
\begin{lem}\label{lem:OrderStatForBatches}
Let $\set{\omega_i}_0^{n-1}$ be $\iid$ random variables. 
Let $\zeta\in (0,1)$ with $\zeta n$ an integer. 
If 
\[
\Prob\set{\omega_0>\tau_+(u)}\leq 2e^{-u}, \qtext{then} 
\Prob\set{\omega_{((1-\zeta)n)}>\tau_+(u)}
    \leq \exp\big(\zeta n(\log(e/\zeta)+\log(2)-u)\big). 
\]
\end{lem}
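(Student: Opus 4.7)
The plan is to translate the order statistic bound into a counting statement, then apply a union bound tuned by the binomial coefficient estimate already used elsewhere in the paper. The key observation is that the event $\{\omega_{((1-\zeta)n)} > \tau_+(u)\}$ occurs if and only if at least $\zeta n$ of the $\omega_i$ strictly exceed $\tau_+(u)$, since the $(1-\zeta)n$-th order statistic (in the zero-indexed convention used in the paper for the upper tail) is the threshold above which $\zeta n$ values sit.

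So first I would set $p := \Prob\{\omega_0 > \tau_+(u)\}$, which by hypothesis is at most $2e^{-u}$. Let $N_u := \#\{i : \omega_i > \tau_+(u)\}$; by independence $N_u$ is Binomial$(n,p)$, and the previous step identifies the event of interest with $\{N_u \geq \zeta n\}$.

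Next I would bound $\Prob\{N_u \geq \zeta n\}$ by a union bound over the $\binom{n}{\zeta n}$ subsets $S$ of size $\zeta n$:
\[
\Prob\{N_u \geq \zeta n\}
\leq \binom{n}{\zeta n} \Prob\{\omega_i > \tau_+(u) \text{ for all } i \in S\}
= \binom{n}{\zeta n} p^{\zeta n}
\leq \binom{n}{\zeta n} (2e^{-u})^{\zeta n}.
\]
Applying the standard estimate $\binom{n}{\zeta n} \leq (e/\zeta)^{\zeta n}$ (the same lemma~\ref{lem:binomEstimate} invoked in theorem~\ref{thm:FreeDecompBulkJL}) gives
\[
\Prob\{N_u \geq \zeta n\}
\leq \left(\frac{e}{\zeta}\right)^{\zeta n} 2^{\zeta n} e^{-u\zeta n}
= \exp\bigl(\zeta n(\log(e/\zeta) + \log 2 - u)\bigr),
\]
which is the claimed inequality.

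There is no real obstacle here: the only thing to be careful about is the zero-indexing of the order statistics (to make sure the event really is $\{N_u \geq \zeta n\}$ and not, say, $\{N_u \geq \zeta n + 1\}$, which would only improve the bound) and the fact that the hypothesis is on the strict upper tail, which matches the strict inequality in the conclusion. The independence of the $\omega_i$ is used only to factor $p^{\zeta n}$; a weaker exchangeability plus a union bound would also work, but is not needed here.
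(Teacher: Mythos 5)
Your proposal is correct and matches the paper's proof essentially step for step: both identify the event with at least $\zeta n$ of the $\omega_i$ exceeding $\tau_+(u)$, apply a union bound over the $\binom{n}{\zeta n}$ subsets, use independence to factor the probability as $(2e^{-u})^{\zeta n}$, and finish with the estimate $\binom{n}{\zeta n}\leq (e/\zeta)^{\zeta n}$ from lemma~\ref{lem:binomEstimate}. Your remark that exchangeability plus the union bound would suffice for the combinatorial step (with independence needed only to factor the product) is exactly the structure the paper uses as well.
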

\begin{proof}
First note
\[
\Prob\set{\omega_{((1-\zeta)n)}>\tau_+(u)}=\Prob\set{\omega_{(i)}>\tau_+(u)\txt{for} i\geq (1-\zeta)n}, 
\]
that is, we are looking for the top $\zeta n$ of the $\omega_{(i)}$ to be larger than $\tau_+(u)$. 
Because the $\omega_i$ are drawn $\iid$, all their $\zeta n$ sized subsets are equally likely, so we may conclude
\begin{align*}
\Prob\set{\omega_{((1-\zeta)n)}>\tau_+(u)}
&\leq \binom{n}{\zeta n}
    \Prob\set{\omega_i>\tau_+(u)\txt{for} 0\leq i\leq \zeta n-1}\\
&\leq \left(\frac{e}{\zeta}\right)^{\zeta n}\left(2e^{-u}\right)^{\zeta n}
=\exp\big(\zeta n(\log(e/\zeta)+\log(2)-u)\big), 
\end{align*}
using the independence of the $\omega_i$ in the last line. 
%
\end{proof}

In lemma~\ref{lem:OrderStatForBatches}, we shall take $n$ to be the number of batches in a given subgraph, so that the random variables in question are independent. 
The subgraphs from lemma~\ref{lem:IndependentWithSplitWalecki}
are of size at least $(N-1)/4$ and at most $N/2$, but are not all of the same size when $N$ is odd. 
If $W$ is one of these subgraphs, it contains $\floor{\abs{W}/M}$ batches, as we have enforced each batch to have size at least $M$, and reassigned any remainder $\abs{W}\mod M$ to those batches. 
%
When $N$ is even, $\abs{W}=N/2$, and we only need to require $\zeta n:=\zeta\floor{N/(2M)}\in \N$. 
When $N$ is odd, we adjust $\zeta$ depending on the size of the subgraph $W$.  
Suppose we enforce $\zeta n:=\zeta \floor{(N-1)/(4M)}$ to be an integer, recalling $\abs{W}\geq (N-1)/4$ in all cases. 
For any other subgraph $W$ of larger size, set
\begin{equation}\label{eqn:ZetaTilde}
\tilde{\zeta} = \zeta \frac{n}{\floor{\abs{W}/M}}
\qtext{so that}
\tilde{\zeta}\floor{\abs{W}/M}=\zeta n. 
\end{equation}
In particular for all the subgraph sizes, $1/\tilde{\zeta}< 2(n+1)/(\zeta n)\leq 3/\zeta$ if we assume $\zeta\leq 1/2$, as $\zeta n\geq 1$. 
Thus in lemma~\ref{lem:OrderStatForBatches}, we replace $\log(e/\zeta)$ by $\log(3e/\zeta)$ when we consider the different subgraphs. 

\subsubsection{Retraction to the Sphere}\label{sec:Retraction}
Suppose $y$ does not have a second moment, that is, $\Cov(\xi)$ is undefined. As mentioned before, we can use lemma~\ref{lem:RelativeErrorCovarSigmaOneBoundedCase} on the unit normalized batches $\wh{\Upsilon}$ instead, provided we replace $\Sigma$ by $\wh{\Sigma}:=\E \wh{y}\wh{y}^\top$ and $r$ by $\wh{r}=r(\wh{y})=1/\norm{\wh{\Sigma}}$. 
By Cauchy-Schwarz, $\norm{\wh{\Sigma}}\leq 1$ always: for any unit vector $v$,
\[
v^\top \E \wh{y}\wh{y}^\top v 
=\E (\wh{y},v)^2\leq \E\norm{\wh{y}}_2^2\norm{v}_2^2=1. 
\]
A first question to ask is: in the presence of a second moment, how are the operator norms of $\Sigma$ and $\wh{\Sigma}$ related? 
The following lemma gives one such answer. 
\begin{lem}\label{lem:StableRankTransferForRetraction}
Let $y$ be a random vector in $\R^D$, with second moment matrix $\E yy^\top =\Sigma$. If $\wh{y}=y/\norm{y}_2$, then with $\wh{\Sigma}=\E \wh{y}\wh{y}^\top$, for any $\epsilon\in(0,1)$, 
\[
\norm{\wh{\Sigma}}
\leq \frac1{\epsilon\, r}+p(\epsilon)
\qtext{with}
p(\epsilon):=\Prob\set{\norm{y}_2^2<\epsilon\, \E\norm{y}_2^2} \qtext{and}
r=\frac{\tr\Sigma}{\norm{\Sigma}}=\frac{\E\norm{y}_2^2}{\norm{\Sigma}}. 
\]
Further, $\wh{r}\geq r/(\epsilon^{-1}+rp(\epsilon))$. 
\end{lem}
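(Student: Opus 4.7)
The plan is to bound $\norm{\wh\Sigma}$ by its defining supremum and split the expectation according to whether $\norm{y}_2$ is large or small relative to its mean square. Since $\wh\Sigma$ is positive semidefinite,
\[
\norm{\wh\Sigma}=\sup_{v\in S^{D-1}} v^\top \wh\Sigma v = \sup_{v\in S^{D-1}}\E\frac{(y,v)^2}{\norm{y}_2^2},
\]
so I would fix a unit vector $v$ and partition the underlying sample space into the ``good'' event $G=\{\norm{y}_2^2\geq \epsilon\,\E\norm{y}_2^2\}$ and its complement $G^c$ (whose probability is $p(\epsilon)$ by definition).

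On $G$, I would use the lower bound on $\norm{y}_2^2$ to pull the denominator out:
\[
\E\left[\frac{(y,v)^2}{\norm{y}_2^2}\I_G\right]
\leq \frac{\E(y,v)^2}{\epsilon\,\E\norm{y}_2^2}
= \frac{v^\top\Sigma v}{\epsilon\tr\Sigma}
\leq \frac{\norm{\Sigma}}{\epsilon\tr\Sigma}=\frac{1}{\epsilon r}.
\]
On $G^c$, the deterministic inequality $(y,v)^2\leq \norm{y}_2^2\norm{v}_2^2=\norm{y}_2^2$ (Cauchy--Schwarz with $\norm{v}_2=1$) gives an integrand bounded by $1$, so the contribution is at most $\Prob(G^c)=p(\epsilon)$. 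Adding the two pieces and taking the supremum over $v\in S^{D-1}$ yields the claimed bound $\norm{\wh\Sigma}\leq (\epsilon r)^{-1}+p(\epsilon)$.

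The second assertion is immediate: since $\wh{r}=1/\norm{\wh\Sigma}$, the first bound rearranges to
\[
\wh{r}\geq \frac{1}{(\epsilon r)^{-1}+p(\epsilon)}=\frac{r}{\epsilon^{-1}+r\,p(\epsilon)}.
\]
There is no real obstacle here; the only subtlety is recognizing that the Cauchy--Schwarz bound on $G^c$ is necessary precisely because $1/\norm{y}_2^2$ can blow up when $y$ is near the origin, so without the cutoff the naive estimate would fail. Separating the two regimes via $\epsilon$ converts the potential singularity into a tail probability $p(\epsilon)$, which is exactly the trade-off the lemma encodes.
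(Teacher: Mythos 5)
Your proof is correct and takes essentially the same approach as the paper: split the quadratic form $\E(\wh y, v)^2$ according to whether $\norm{y}_2^2$ exceeds $\epsilon\,\E\norm{y}_2^2$, bound the good part via the denominator cutoff and the bad part via Cauchy--Schwarz, then take the supremum over unit $v$. The final rearrangement for $\wh r$ is also as in the paper.
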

\begin{rem}
Some dependence on the small ball (if $\epsilon\ll 1$) or lower deviation (if $\epsilon\approx 1$) probability 
$\Prob\set{\norm{y}_2^2<\epsilon\, \E\norm{y}_2^2}$ must be present, in general, due to the nature of the retraction to the sphere. 
Specifically, suppose $y$ is distributed uniformly at random from a finite set in $\R^D$, having a cluster of points all pointing in roughly the $e_1$ direction, but of very small norm. 
If all the other points are distributed uniformly on the unit sphere, one expects $\norm{\Sigma}$ to be roughly 1/D, as the cluster points will not contribute much to the operator norm. 
However, after the retraction, these cluster points now all have unit norm and are still pointing in roughly the $e_1$ direction, so if there are enough points in the cluster, $\norm{\wh{\Sigma}}$ could now be much closer to 1. 
\end{rem}
\begin{proof}
The matrices $\Sigma$ and $\wh{\Sigma}$ are symmetric positive semi-definite, 
so their singular values correspond to their eigenvalues. 
These eigenvalues involve the quadratic form $v\mapsto v^\top \Sigma v$ or $v\mapsto v^\top \wh\Sigma v$. 
The latter quadratic form is just $\E (\wh{y}, v)^2$, which we may split as
\[
\E (\wh{y}, v)^2 = \E (\wh{y}, v)^2\I\set{\norm{y}_2^2\geq \epsilon\,\E \norm{y}_2^2} 
+ \E (\wh{y}, v)^2\I\set{\norm{y}_2^2< \epsilon\,\E \norm{y}_2^2}.
\]
For the first term, 
\[
\E (\wh{y}, v)^2\I\set{\norm{y}_2^2\geq \epsilon\,\E \norm{y}_2^2}
\leq \frac{\E (y, v)^2}{\epsilon\E\norm{y}_2^2}\I\set{\norm{y}_2^2\geq \epsilon\,\E \norm{y}_2^2}
\leq \frac{\E (y, v)^2}{\epsilon\,\E\norm{y}_2^2}. 
\]
When $v$ is a unit vector, $(\wh{y},v)^2\leq 1$ always, so for such $v$, 
\begin{align*}
\E (\wh{y}, v)^2&\leq \frac{\E (y, v)^2}{\epsilon\,\E\norm{y}_2^2}
    +\E (\wh{y}, v)^2\I\set{\norm{y}_2^2< \epsilon\,\E \norm{y}_2^2}\\
&\leq \frac{\E (y, v)^2}{\epsilon\,\E\norm{y}_2^2}
    +\Prob\set{\norm{y}_2^2< \epsilon\,\E \norm{y}_2^2}. 
\end{align*}
Now $\norm{\wh{\Sigma}}$ is just the maximum of $\E (\wh{y},v)^2$ over the unit sphere, and the maximum is realized by some $v^\ast$ as the sphere is compact. 
Consequently, 
\begin{align*}
\norm{\wh{\Sigma}}
&\leq \frac{\E (y, v^\ast)^2}{\epsilon\,\E\norm{y}_2^2}
    +\Prob\set{\norm{y}_2^2< \epsilon\,\E \norm{y}_2^2}
\leq \max_{v\in S^{D-1}}\frac{\E (y, v)^2}{\epsilon\,\E\norm{y}_2^2}
    +\Prob\set{\norm{y}_2^2< \epsilon\,\E \norm{y}_2^2}\\
&=\frac{\norm{\Sigma}}{\epsilon\,\E\norm{y}_2^2}
    +\Prob\set{\norm{y}_2^2< \epsilon\,\E \norm{y}_2^2}. 
\end{align*}
Recalling $\E\norm{y}_2^2=\tr \Sigma$ and the definition of $r$ and $\wh{r}=1/\norm{\wh{\Sigma}}$ finishes the proof. 
\end{proof}


Apart from assumption~\ref{AssumptionXZ}, we make no other assumptions on $X$ or $\xi$ in the following theorem. 
\begin{thm}\label{thm:ArbitraryOnUnitSphereBulkJL}
Let $0<\epsilon,\,\eta,\,\zeta<1$, $\zeta\leq 1/2$, $\epsilon\leq 2/3$, and $0<\alpha$. 
Under assumption~\ref{AssumptionXZ}, equation~\eqref{eqn:JLUnsquaredDistancesPairwise} holds
for at least $(1-2\eta)(1-\zeta)\binom{N}{2}$ pairs $x,x'\in X$, with probability at least $1-2\delta$ over the draw of $(Z,X)$, provided
\[
k\geq \frac{C_{\lbrack\ref{thm:FreeDecompBulkJL}\rbrack}}{\epsilon^2}
\left(\alpha+\frac4{3}+\sqrt{2\alpha}\right)\left(\log(2e/\eta) \frac{(\log(6e/\zeta)+\log D)}{1-t'}  
    +\frac{\log(N^2/(\alpha\wh{r}\delta\log(D)))}{\alpha\max(\eta\wh{r},1)}\right)
\]
when the quantities $\eta M$ and $\zeta\floor{(N-1)/(4M)}$ are strictly positive integers, with
\[
\frac{N-1}{8}\geq M:=
\alpha\wh{r}\frac{(\log(6e/\zeta)+\log D)}{1-t'} 
\quad
1>t':=\frac{8\alpha\wh{r}\log(\frac{3(N-1)}{2\delta})}{\zeta(N-1)}, 
\qtext{and} \gamma(\epsilon)=\gamma_{\lbrack\ref{thm:FreeDecompBulkJL}\rbrack}. 
\]
\end{thm}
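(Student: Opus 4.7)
The plan is to invoke theorem~\ref{thm:UnitFreeDecompBulkJL}, but only on a sub-decomposition $\decomp'\subset\decomp$ of \qt{good} batches whose unit-normalized versions satisfy $r_\infty(\wh{\Upsilon})\geq \wh{r}/(1+\tau_+)$ for a controlled $\tau_+$, and then to conclude via the $\ev_Z,\ev_X$ framework spelled out in the paragraph preceding the theorem. Concretely, I use the 1-regular subgraph form of the Walecki construction (lemma~\ref{lem:IndependentWithSplitWalecki}) to carve $Y$ into at most $3(N-1)/2$ subgraphs, each of size between $(N-1)/4$ and $N/2$, and within each subgraph split the vectors into batches of size $M$ (absorbing any remainder into batches of size at most $2M-1$, and adjusting $\eta,\zeta$ to $\tilde{\eta},\tilde{\zeta}$ as in equations~\eqref{eqn:EtaTilde} and~\eqref{eqn:ZetaTilde}). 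Within each subgraph, the batches are now i.i.d., which is what unlocks the order-statistics argument.

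For a single batch, since the columns of $\wh{\Upsilon}$ are unit vectors (so $\norm{\wh{y}}_2^2=1=\E\norm{\wh{y}}_2^2$), lemma~\ref{lem:RelativeErrorCovarSigmaOneBoundedCase} applies with $K=1$ and $r$ replaced by $\wh{r}$, giving, for any $u>0$,
\[
\Prob_X\set{\norm{\wh{\Sigma}_M-\wh{\Sigma}}>\norm{\wh{\Sigma}}\,\tau_+(u)}\leq 2e^{-u},
\qquad
\tau_+(u)=\tfrac{4}{3}\,\tfrac{\wh{r}(u+\log D)}{M}+\sqrt{\tfrac{2\wh{r}(u+\log D)}{M}}.
\]
Since $\norm{\wh{\Upsilon}}^2=\norm{M\wh{\Sigma}_M}\leq M\norm{\wh{\Sigma}}(1+\tau_+(u))$ on that event, and the columns of $\wh{\Upsilon}$ have constant unit norm, this yields $r_\infty(\wh{\Upsilon})=M/\norm{\wh{\Upsilon}}^2\geq \wh{r}/(1+\tau_+(u))$ on the same event.

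To upgrade to \qt{most} batches, apply lemma~\ref{lem:OrderStatForBatches} \emph{within each subgraph} to the i.i.d.\ variables $\omega_i:=\norm{\wh{\Sigma}_M^{(i)}-\wh{\Sigma}}$ with $n=\lfloor |W|/M\rfloor$ batches; the $(1-\tilde{\zeta})n$-order statistic exceeds $\norm{\wh{\Sigma}}\tau_+(u)$ with probability at most $\exp(\tilde{\zeta}n(\log(3e/\zeta)+\log 2-u))$, using $1/\tilde{\zeta}<3/\zeta$. Taking a further union bound over the $\leq 3(N-1)/2$ subgraphs and using $n\geq(N-1)/(8M)$ (guaranteed by the hypothesis $M\leq (N-1)/8$), this whole event has probability at most $\delta$ provided
\[
u\geq \log(6e/\zeta)+\frac{8M\log(3(N-1)/(2\delta))}{\zeta(N-1)}.
\]
Choosing $M=\alpha\wh{r}(\log(6e/\zeta)+\log D)/(1-t')$ makes the second term on the right equal to $t'(u+\log D)$ with $t'$ as in the theorem, so $u+\log D=(\log(6e/\zeta)+\log D)/(1-t')$ and $\wh{r}(u+\log D)/M=1/\alpha$. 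Hence on the good batches (a fraction $\geq 1-\zeta$), $r_\infty(\wh{\Upsilon})\geq \wh{r}/(1+\tfrac{4}{3\alpha}+\sqrt{2/\alpha})$, so that $M/r_\infty(\wh{\Upsilon})\leq (\alpha+\tfrac{4}{3}+\sqrt{2\alpha})(\log(6e/\zeta)+\log D)/(1-t')$.

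With this $\wh{R}_\infty$ bound in hand for the sub-decomposition of good batches (which contains at least $(1-\zeta)\binom{N}{2}/M$ batches, whence the $\log(N^2/(M\delta))$ in theorem~\ref{thm:UnitFreeDecompBulkJL} becomes $\log(N^2/(\alpha\wh{r}\log(D)\delta))$ after substituting $M$), theorem~\ref{thm:UnitFreeDecompBulkJL} gives, conditional on $\ev_X$, a failure probability over $Z$ of at most $\delta$ for preserving $(1-2\eta)$ of the distances inside each good batch. Summing the two probabilities via the $\ev_Z,\ev_X$ computation preceding the theorem yields the overall $1-2\delta$ bound and the $(1-\zeta)(1-2\eta)$ fraction of preserved distances. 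The main bookkeeping obstacle is matching the constants: keeping the $\log(6e/\zeta)$ from the $\log(3e/\zeta)+\log 2$ combination, the factor $8$ in $t'$ from using $\lfloor (N-1)/(4M)\rfloor\geq (N-1)/(8M)$, and verifying that $1-t'>0$ is forced by the hypothesis on $M$ via the interplay $M(1-t')=\alpha\wh{r}(\log(6e/\zeta)+\log D)$.
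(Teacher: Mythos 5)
Your proposal is correct and follows essentially the same route as the paper: Walecki 1-regular subgraphs to get i.i.d.\ batches, lemma~\ref{lem:RelativeErrorCovarSigmaOneBoundedCase} with $K=1$ on $\wh{\Upsilon}$, lemma~\ref{lem:OrderStatForBatches} within each subgraph plus a union bound over subgraphs, the self-consistent choice of $u$ and $M$ yielding $u+\log D=(\log(6e/\zeta)+\log D)/(1-t')$, and then theorem~\ref{thm:UnitFreeDecompBulkJL} on the good batches combined via the $\ev_Z,\ev_X$ split. Your shortcut $n\geq(N-1)/(8M)$ in place of the paper's $\zeta^\ast$ bookkeeping reaches the same $t'$ and constants, so the sketch matches the paper's proof in all essentials.
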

\begin{rem}
To make sense of the above, consider $D=D_{JL}=O(\epsilon^{-2}\log(N^2/\delta))$ as if from the original Johnson-Lindenstrauss lemmas. 
We always have $\wh{r}\leq D$, so $t'$ vanishes fairly quickly with increasing $N$ when $\zeta$ is fixed or even slowly decaying. 
Compared to theorem~\ref{thm:SubGaussianIIDCoordsBulkJL}, we have an extra factor against $\log(2e/\eta)$, but it is not too big in that $\log(D)=O(\log\log(N/\delta))$, and $\log(6e/\zeta)$ does not grow quickly with decreasing $\zeta$.  
When $N$ is large, we can make $\zeta$ small enough that the fraction $(1-2\eta)(1-\zeta)$ is not that much worse than $(1-2\eta)$. 
Finally, in light of lemma~\ref{lem:StableRankTransferForRetraction}, we can replace $\wh{r}$ by $r$ in the definition of $k$ at the expense of a bounded prefactor for $\log(N^2/\delta)$ provided the lower deviation or small ball probability $p(\epsilon)$ is less than $1/r$. 
\end{rem}
\begin{proof}
From lemma~\ref{lem:IndependentWithSplitWalecki}, we have either $N-1$ or $3(N-1)/2$ 1-regular subgraphs to consider when $N\geq 7$, and we choose (unit normalized) batches $\wh{\Upsilon}$ of size at least $M$ from these subgraphs. 
Let $M\wh{\Sigma}_M:=\wh{\Upsilon}\wh{\Upsilon}^\top$. 
Within each subgraph, the batches are independent, allowing us to use lemma~\ref{lem:OrderStatForBatches} on the random variables 
\[
\omega(\wh{\Upsilon}):=\norm{\wh{\Sigma}_M-\wh{\Sigma}}. 
\]
Take $n:=\floor{(N-1)/(4M)}$ for that lemma and recall the $\tilde{\zeta}$ discussion from equation~\eqref{eqn:ZetaTilde}. 
We have to choose $u$ so that a union bound over all the subgraphs is still smaller than $\delta$, so a safe value for $u$ would be 
\begin{align}
\frac{3(N-1)}{2}\exp\left(\zeta n\big(\log(3\,e/\zeta)+\log(2)-u\big)\right)
&\leq \delta\\
\label{eqn:uDepsZetaAndn}
\log(6e/\zeta)+\frac{\log(3(N-1)/(2\delta))}{\zeta n}&\leq u
\end{align}
With this $u$ in hand, 
we can apply lemma~\ref{lem:RelativeErrorCovarSigmaOneBoundedCase} with $K=1$, for $M\wh{\Sigma}_M=\wh{\Upsilon}\wh{\Upsilon}^\top$
\begin{align*}
\norm{\wh{\Upsilon}}^2=\norm{M\wh{\Sigma}_M}&\leq M\norm{\wh{\Sigma}_M-\wh{\Sigma}}+M\norm{\wh{\Sigma}}\\
&\leq M\norm{\wh{\Sigma}}\left(1+\left(\frac{(4/3) \wh{r}(u+\log D)}{M}
    +\sqrt{\frac{2\wh{r}(u+\log D)}{M}}\right) \right). 
\end{align*}
Because we are interested in 
$r_\infty(\wh{\Upsilon})=M/\norm{\wh{\Upsilon}}^2$, we should like to make the error term manageable, so choose 
\[
M=\alpha \wh{r}(u+\log D) \qtext{with} \alpha>0. 
\]
Because $u$ already depends on $M$ through $n$ in equation~\ref{eqn:uDepsZetaAndn}, there is a constraint on $u$ and $\zeta$ that we need to address.  
Write $(N-1)/4 = nM +s$ with $0\leq s\leq M-1$. 
Set $\zeta^\ast$ to satisfy $\zeta n = \zeta^\ast (N-1)/(4M)$. 
We then have
\begin{align*}
\log(6e/\zeta)+t(u+\log D)&\leq u
\qtext{with} t:=4\alpha\wh{r}\frac{\log(3(N-1)/(2\delta))}{\zeta^\ast (N-1)}>0\\
\log(6e/\zeta)+t\log D
&\leq (1-t)u
\end{align*}
We can divide by $(1-t)$ provided $t<1$. 
\begin{align*}
\alpha\wh{r}\log(3(N-1)/(2\delta))<\zeta^\ast\frac{N-1}{4}
=\zeta\left(\frac{N-1}{4}-s\right)
\end{align*}
Recalling $s\leq M-1$, if we also require $M\leq (N-1)/8$, 
it would be safe to require
\[
\alpha\wh{r}\log(3(N-1)/(2\delta))< \zeta\frac{N-1}{8}
<\zeta\left(\frac{N-1}{4}-s\right)
\qtext{and}
u\leq \frac{N-1}{8\alpha\wh{r}}-\log(D). 
\]
We then have
\[
1>t':=8\alpha\wh{r}\frac{\log(3(N-1)/(2\delta))}{\zeta(N-1)}>t, 
\]
and because the maps $t\mapsto t/(1-t)$ and $t\mapsto 1/(1-t)$ are strictly increasing, a valid lower bound for $u$ is
\[
u\geq \frac{t'}{1-t'}\log(D)+\frac1{1-t'}\log(6e/\zeta). 
\]
Taking $u$ as this lower bound yields
\[
u+\log(D)=\frac1{1-t'}(\log(D) + \log(6e/\zeta)). 
\]

With this choice of $u$ in hand, with probability at least $1-\delta$, 
\[
r_\infty(\wh{\Upsilon})\geq \frac{M}{M\norm{\wh{\Sigma}}
    \left(1+\frac4{3\alpha}+\sqrt{\frac2{\alpha}}\right)}
=\frac{1/\norm{\wh{\Sigma}}}{\left(1+\frac4{3\alpha}+\sqrt{\frac2{\alpha}}\right)}
=\frac{\wh{r}}{\left(1+\frac4{3\alpha}+\sqrt{\frac2{\alpha}}\right)}
=:\wh{R}_\infty(M;\zeta)
\]
for at least $(1-\zeta)$ of all batches $\wh{\Upsilon}$. 
Assuming this bound holds, we now ask that when $Z$ is drawn, equation~\eqref{eqn:JLUnsquaredDistancesPairwise} holds for all the vectors involved in \emph{at least} these batches, with failure probability at most $\delta$. 
We run the argument of theorem~\ref{thm:UnitFreeDecompBulkJL}, only for these batches $\wh{\Upsilon}$, using $\wh{R}_\infty(M;\zeta)$ in place of $\wh{R}_\infty(M)$. 
As we could still have $\floor{\binom{N}{2}/M}$ \qt{good} batches, we still must allow for all of them when we compute the union bound.  
The $M/\wh{R}_\infty(M)$ ratio in theorem~\ref{thm:UnitFreeDecompBulkJL} now just becomes
\[
\frac{M}{\wh{R}_\infty(M; \zeta)}
=\frac{\alpha \wh{r}(u+\log D)}{\wh{r}}
    \left(1+\frac4{3\alpha}+\sqrt{\frac2{\alpha}}\right)
=(u+\log D)\left(\alpha+\frac4{3}+\sqrt{2\alpha}\right)
\]
Let $C_\alpha$ be the coefficient of $(u+\log D)$ in the above.
We may then set $k$ as 
\begin{equation}\label{eqn:tentativeK}
k\geq \frac{C_{\lbrack\ref{thm:FreeDecompBulkJL}\rbrack}}{\epsilon^2}
C_\alpha \left(\log(2e/\eta) \frac{(\log(6e/\zeta)+\log D)}{1-t'}  
    +\frac{\log(N^2/(\alpha\wh{r}\delta\log(D)))}{\alpha\max(\eta\wh{r},1)}\right)
\end{equation}
using $u+\log(D)\geq \log(D)$ in the $\log(N^2)$ term. 
The choice of $\gamma$ follows from theorem~\ref{thm:UnitFreeDecompBulkJL}. 
\end{proof}

In certain cases, we know $\wh{r}$ exactly without relying on lemma~\ref{lem:StableRankTransferForRetraction}. 
\begin{lem}\label{lem:IsotropicRetractionIID}
Suppose $\xi=(\xi_1,\ldots, \xi_D)\in \R^D$ is a random vector with $\iid$ coordinates, and $\xi'$ is an independent copy of $\xi$. 
If $y:=\xi-\xi'$, then the scaled unit vector $\wh{y}\sqrt{D}$ 
is mean-zero isotropic.  
\end{lem}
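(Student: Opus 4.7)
The plan is to verify directly that $\E[\sqrt{D}\,\wh{y}] = o$ and $\E[D\,\wh{y}\wh{y}^\top] = \Id_D$, using only two symmetries that the iid-coordinate assumption hands us: the global sign symmetry $y \stackrel{d}{=} -y$, and coordinate-wise symmetries arising from independence.

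First I would settle the mean. Since $\xi$ and $\xi'$ are iid, the pair $(\xi, \xi')$ is exchangeable, so $y = \xi - \xi'$ has the same joint distribution as $\xi' - \xi = -y$. Normalizing preserves this, so $\wh{y} \stackrel{d}{=} -\wh{y}$, giving $\E\wh{y} = o$ (as $\norm{\wh{y}}_2 = 1$ so integrability is automatic; the case $y = o$ has probability zero if $\xi$ is non-atomic, and can be assigned any value otherwise without affecting the argument since we only need a symmetric tie-breaking rule).

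Next I would compute the diagonal of $\wh{\Sigma} = \E\,\wh{y}\wh{y}^\top$. The coordinates of $y$ are iid (each $y_i = \xi_i - \xi_i'$), so the law of $y$ is invariant under coordinate permutations. Normalization respects this symmetry, so $\E\,\wh{y}_i^2$ does not depend on $i$. But $\sum_i \E\,\wh{y}_i^2 = \E\norm{\wh{y}}_2^2 = 1$, hence $\E\,\wh{y}_i^2 = 1/D$ for every $i$.

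For the off-diagonal entries, fix $i \neq j$ and exploit that $y_i$ is symmetric (indeed $y_i \stackrel{d}{=} -y_i$) and independent of $(y_\ell)_{\ell \neq i}$. Flipping the sign of the $i$-th coordinate alone therefore preserves the joint law of $y$, and this operation leaves $\norm{y}_2$ unchanged while sending $\wh{y}_i \mapsto -\wh{y}_i$ and $\wh{y}_j \mapsto \wh{y}_j$. Thus $\E\,\wh{y}_i \wh{y}_j = -\E\,\wh{y}_i \wh{y}_j$, so $\E\,\wh{y}_i \wh{y}_j = 0$. Combining, $\wh{\Sigma} = \Id_D/D$, which is exactly $\E (\sqrt{D}\,\wh{y})(\sqrt{D}\,\wh{y})^\top = \Id_D$. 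Since both steps are pure symmetry arguments, there is no real obstacle — the only point requiring minor care is handling the measure-zero event $y = o$ (or pointing out that it has probability zero when the distribution of $\xi$ is continuous, and that a symmetric convention suffices otherwise).
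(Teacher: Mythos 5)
Your proof is correct and takes essentially the same approach as the paper: both exploit the coordinate-wise symmetry of $y_i = \xi_i - \xi_i'$ and the iid structure (identical distribution for the diagonal, independence for the off-diagonal). The packaging differs slightly — you phrase each step as a distributional invariance under a measure-preserving map (global sign flip, permutation, single-coordinate sign flip), whereas the paper works through odd-function integrals and a conditional expectation — but the underlying observations are identical.
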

There are no moment assumptions on the coordinates $\xi_i$ here, so the lemma even applies to vectors with $\iid$ standard Cauchy coordinates.   
\begin{proof}
Both properties rely on the following observation. 
For a fixed coordinate $i$, the coordinate $ y_i=\xi_i-\xi_i'$ is a symmetric random variable: $ y_i$ and $- y_i$ have the same distribution. 
Consequently, for any odd function $f$, (using the symmetry in the 2nd equality)
\[
-\E f( y_i)=\E f(- y_i)=\E f( y_i)
\]
so that $\E f( y_i)=0$ for such odd functions $f$. 

If we temporarily freeze the other coordinates, the $i$th coordinate of the unit vector $\wh{ y}$ is just
\[
\wh{ y}_i=\frac{ y_i}{( y_i^2 + \sum_{j\neq i}^D y_j^2)^{1/2}}
=\frac{ y_i}{( y_i^2+C)^{1/2}},
\]
an odd function of $ y_i$, forcing $\wh{ y}\sqrt{D}$ to be mean-zero. 

To check $\wh{ y}\sqrt{D}$ is isotropic, we must show the matrix 
$\Sigma=D\,\E \wh{ y}\wh{ y}^\top$ is the identity $\Id_D$. 
Because the $ y_i$ are identically distributed, 
\[
D\,\E\frac{ y_1^2}{\sum_{j=1}^D  y_j^2}
=\E \sum_{i=1}^D \frac{ y_i^2}{\sum_{j=1}^D  y_j^2}=1, 
\]
so the diagonal entries of $\Sigma$ are all equal to $1$. 

Further, when $i\neq j$, the independence of the $ y_i$ now give
\[
D\,\E \frac{ y_i y_j}{ y_i^2+\sum_{k\neq i}^D y_k^2}
=D\,\E y_j\E\left(\frac{ y_i}{ y_i^2+\sum_{k\neq i}^D y_k^2}\Big\lvert y_{j\neq i}\right)
=0
\]
because the conditional expectation vanishes on the odd function of $ y_i$. 
\end{proof}


We now have an immediate corollary to theorem~\ref{thm:ArbitraryOnUnitSphereBulkJL}, which again we may compare to theorem~\ref{thm:SubGaussianIIDCoordsBulkJL}. 
\begin{cor}\label{cor:ArbitraryIIDCoordsBulkJL}
In the setting of theorem~\ref{thm:ArbitraryOnUnitSphereBulkJL}, suppose $\xi$ has $\iid$ coordinates. 
Then the corresponding conclusion still holds, with $\wh{r}=D$, namely it suffices to take $\gamma(\epsilon)=\gamma_{\lbrack\ref{thm:FreeDecompBulkJL}\rbrack}$ and
\[
k\geq \frac{C_{\lbrack\ref{thm:FreeDecompBulkJL}\rbrack}}{\epsilon^2}
\left(\alpha+\frac4{3}+\sqrt{2\alpha}\right)\left(\log(2e/\eta) \frac{(\log(6e/\zeta)+\log D)}{1-t'}  
    +\frac{\log(N^2/(\alpha D\delta\log(D)))}{\alpha\max(\eta D,1)}\right). 
\]
\end{cor}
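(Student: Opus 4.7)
The plan is to apply Theorem~\ref{thm:ArbitraryOnUnitSphereBulkJL} verbatim, with the intrinsic-dimension parameter $\wh{r}$ computed explicitly via Lemma~\ref{lem:IsotropicRetractionIID}. There is essentially no new content to prove beyond recognizing that the iid-coordinates hypothesis forces the maximal possible value $\wh{r}=D$.

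First I would recall that, by definition, $\wh{r} = 1/\norm{\wh{\Sigma}}$ where $\wh{\Sigma} = \E\,\wh{y}\wh{y}^\top$ and $\wh{y} = \widehat{\xi-\xi'}$ for an independent copy $\xi'$ of $\xi$. The key observation is that Lemma~\ref{lem:IsotropicRetractionIID} applies directly: since $\xi$ has iid coordinates, the scaled unit vector $\wh{y}\sqrt{D}$ is isotropic, i.e.\ $\E (\wh{y}\sqrt{D})(\wh{y}\sqrt{D})^\top = \Id_D$. Rearranging, $\wh{\Sigma} = \Id_D/D$, so $\norm{\wh{\Sigma}} = 1/D$ and therefore $\wh{r} = D$.

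Second, I would substitute $\wh{r}=D$ directly into the statement of Theorem~\ref{thm:ArbitraryOnUnitSphereBulkJL}. This replaces every occurrence of $\wh{r}$ in the expressions for $M$, $t'$, and $k$ by $D$. In particular the factor $1/(\alpha\max(\eta\wh{r},1))$ becomes $1/(\alpha\max(\eta D,1))$ and the numerator $\log(N^2/(\alpha\wh{r}\delta\log D))$ becomes $\log(N^2/(\alpha D\delta\log D))$, producing exactly the bound claimed in the corollary. The constraints $\eta M\in\N$, $\zeta\lfloor(N-1)/(4M)\rfloor\in\N$, $M\le (N-1)/8$, and $t'<1$ transfer unchanged since they only involve $\wh{r}$ through the substitution just made, and the choice $\gamma(\epsilon) = \gamma_{\lbrack\ref{thm:FreeDecompBulkJL}\rbrack}$ is inherited from the parent theorem.

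There is no genuine obstacle here; the only thing to verify is that the iid-coordinates hypothesis of the corollary indeed matches the hypothesis of Lemma~\ref{lem:IsotropicRetractionIID} (it does, with no moment assumption required) and that assumption~\ref{AssumptionXZ} still holds (it does, since the corollary inherits the setting of Theorem~\ref{thm:ArbitraryOnUnitSphereBulkJL}). The proof is essentially a one-line reduction.
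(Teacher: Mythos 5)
Your proof is correct and follows the same route as the paper: invoke Lemma~\ref{lem:IsotropicRetractionIID} to conclude $\wh{y}\sqrt{D}$ is isotropic, deduce $\wh{r}=D$, and substitute into Theorem~\ref{thm:ArbitraryOnUnitSphereBulkJL}. The paper phrases the $\wh{r}=D$ step via scale-invariance of the intrinsic dimension ($r(\wh{y})=r(\wh{y}\sqrt{D})=D$) rather than computing $\norm{\wh{\Sigma}}=1/D$ directly, but this is the same calculation.
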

\begin{proof}
By lemma~\ref{lem:IsotropicRetractionIID}, the difference vector $y=\xi-\xi'$ yields the isotropic vector $\wh{y}\sqrt{D}$.
Because $\Sigma(\wh{y}\sqrt{D})=\Id_D$, we compute 
$\wh{r}:=r(\wh{y})=r(\wh{y}\sqrt{D})=D$, as the intrinsic dimension does not see constant scalings. 
We can then apply theorem~\ref{thm:ArbitraryOnUnitSphereBulkJL}. 
\end{proof}
\begin{rem}
The proof only uses that $\wh{y}\sqrt{D}$ is isotropic. 
By lemma~\ref{lem:UnitaryPreservesIsotropic}, $\Phi(\wh{y}\sqrt{D})$ is still isotropic when $\Phi$ has orthonormal rows. 
Because equation~\ref{eqn:JLUnsquaredDistancesPairwise} is 1-homogeneous, the corollary still holds with $\xi$ replaced by $\Phi(\xi)$, in particular when $\Phi$ has a fast transform method available. 
\end{rem}

\subsection{Estimating \texorpdfstring{$\wh{r}$}{the intrinsic dimension of y hat}}
The intrinsic dimension of $\wh{y}$, namely $\wh{r}$, enters into theorem~\ref{thm:ArbitraryOnUnitSphereBulkJL} only as a parameter, so we are free to estimate it separately. 
In particular, 
\begin{cor}\label{cor:EstimatingRHat}
Theorem~\ref{thm:ArbitraryOnUnitSphereBulkJL} holds with $\wh{r}$ replaced by 
an empirical estimate using a batch $\wh{\Upsilon}(m)$ of size $m$, namely,
with failure probability at most $\delta$, 
\[
\wh{r}\geq \frac1{3\norm{\wh{\Sigma}_m}}\geq \frac{\wh{r}}{5}
\qtext{for} m=8D\log(2D/\delta)
\qtext{provided} m\leq (N-1)/2. 
\]
\end{cor}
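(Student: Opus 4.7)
The plan is to apply lemma~\ref{lem:RelativeErrorCovarSigmaOneBoundedCase} to the unit-normalized difference vectors with $K=1$, picking $m$ large enough so that the relative deviation $\tau:=\norm{\wh{\Sigma}_m-\wh{\Sigma}}/\norm{\wh{\Sigma}}$ is at most $2/3$ with failure probability at most $\delta$. First I would draw $\wh{\Upsilon}(m)$ from a single 1-regular Walecki subgraph via lemma~\ref{lem:IndependentWithSplitWalecki}. Because every such subgraph has size at least $(N-1)/4$ and at most $N/2$, the hypothesis $m\leq (N-1)/2$ guarantees one subgraph accommodates $m$ independent copies of $\wh{y}=\wh{\xi-\xi'}$. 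The boundedness condition of the lemma holds trivially with $K=1$ since $\norm{\wh{y}}_2^2=1=\E\norm{\wh{y}}_2^2$.

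Next I would set $u=\log(2/\delta)$, so that the failure probability $2e^{-u}$ in lemma~\ref{lem:RelativeErrorCovarSigmaOneBoundedCase} equals $\delta$, and $u+\log D=\log(2D/\delta)$. With the prescribed $m=8D\log(2D/\delta)=8D(u+\log D)$ and the universal bound $\wh{r}\leq D$, the lemma gives
\[
\tau\leq\frac{(4/3)\wh{r}(u+\log D)}{m}+\sqrt{\frac{2\wh{r}(u+\log D)}{m}}
=\frac{\wh{r}}{6D}+\sqrt{\frac{\wh{r}}{4D}}
\leq\frac{1}{6}+\frac{1}{2}=\frac{2}{3}.
\]
By the reverse triangle inequality on the operator norm, this yields $(1/3)\norm{\wh{\Sigma}}\leq\norm{\wh{\Sigma}_m}\leq(5/3)\norm{\wh{\Sigma}}$; taking reciprocals and dividing by $3$, together with $\wh{r}=1/\norm{\wh{\Sigma}}$, gives the sandwich $\wh{r}\geq 1/(3\norm{\wh{\Sigma}_m})\geq\wh{r}/5$ asserted in the corollary.

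To conclude, I would observe that the estimate $\widehat{r}_{\mathrm{est}}:=1/(3\norm{\wh{\Sigma}_m})$ is a \emph{lower} bound for the true $\wh{r}$. Because $\wh{r}$ appears in the denominators of every term of the bound on $k$ in theorem~\ref{thm:ArbitraryOnUnitSphereBulkJL} (through $\max(\eta\wh{r},1)$, $\alpha\wh{r}$, and the definition of $M$ and $t'$), substituting $\widehat{r}_{\mathrm{est}}$ for $\wh{r}$ can only inflate the required $k$ (by at most a factor of $5$) and shrink $M$ and $t'$; all the preconditions remain valid. A final union bound adds the $\delta$ from the estimate step to the failure probability of the theorem.

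I do not anticipate a serious obstacle: the computation reduces to a careful instantiation of lemma~\ref{lem:RelativeErrorCovarSigmaOneBoundedCase} with the universal a priori bound $\wh{r}\leq D$ used to make the deviation bound data-free. The only delicate point is conceptual rather than technical, namely keeping straight that $\widehat{r}_{\mathrm{est}}$ under-estimates $\wh{r}$ and hence yields a conservative (still-valid) substitute in the target dimension formula.
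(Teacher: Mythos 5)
Your proposal is correct and follows essentially the same route as the paper's own proof: apply lemma~\ref{lem:RelativeErrorCovarSigmaOneBoundedCase} with $K=1$ to a batch of $m$ independent unit-normalized difference vectors drawn from a single Walecki subgraph (lemma~\ref{lem:IndependentWithSplitWalecki} together with corollary~\ref{cor:WaleckiSplitCycles}), choose $u=\log(2/\delta)$ and $m=8D\log(2D/\delta)$, use the universal bound $\wh{r}\leq D$ to force the relative deviation $\tau\leq 2/3$, and then the triangle inequality on operator norms delivers the two-sided sandwich. The closing observation about $\widehat{r}_{\mathrm{est}}$ being a lower bound and hence a conservative drop-in replacement in theorem~\ref{thm:ArbitraryOnUnitSphereBulkJL} is a reasonable clarification that the paper leaves implicit, though you should note the integrality preconditions $\eta M\in\N$ and $\zeta\lfloor (N-1)/(4M)\rfloor\in\N$ must be re-checked for the (smaller) $M$ computed from $\widehat{r}_{\mathrm{est}}$ rather than carrying over automatically.
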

If $D=D_{JL}=O(\epsilon^{-2}\log(N^2/\delta))$, then computing $\norm{\wh{\Sigma}_m}$ will cost polynomial in $\log(N^2/\delta)$ and $\epsilon^{-2}$; however, because $1\leq \wh{r} \leq D$, we do not need very high accuracy when computing this top eigenvalue. 

If we were working with $X$ drawn uniformly with replacement from a larger dataset, we should draw the first $16D\log(2D/\delta)$ data points, and then sequentially pair them off for unit difference vectors to yield $\wh{\Upsilon}(m)$. 
The uniform with replacement assumption assures that these data points used are as good as any other subset, even if some of the data points turn out to be copies of the same point in the larger dataset. 

\begin{proof}
For $m\leq (N-1)/2$, we can find a batch $\wh{\Upsilon}(m)$ of size $m$ with indepedent columns, by corollary~\ref{cor:WaleckiSplitCycles} and lemma~\ref{lem:IndependentWithSplitWalecki}. 
By lemma~\ref{lem:RelativeErrorCovarSigmaOneBoundedCase}, we have, 
with failure probability at most $2\exp(-u)$, 
\[
\norm{\wh{\Sigma}-\wh{\Sigma}_m}\leq \norm{\wh{\Sigma}}\tau_+(u)
=\norm{\wh{\Sigma}}\left(\frac{(4/3)\wh{r}(u+\log(D))}{m}
    +\sqrt{\frac{2\wh{r}(u+\log(D))}{m}}\right). 
\]

By the triangle inequality, we then have 
\begin{align*}
\norm{\wh{\Sigma}}\leq \norm{\wh{\Sigma}}\tau_+(u)+\norm{\wh{\Sigma}_m}
\qtext{that is}
(1-\tau_+(u))\norm{\wh{\Sigma}}\leq \norm{\wh{\Sigma}_m}
\end{align*}
and
\[
\norm{\wh{\Sigma}_m}\leq \norm{\wh{\Sigma}}\tau_+(u)+\norm{\wh{\Sigma}}
=(1+\tau_+(u))\norm{\wh{\Sigma}}. 
\]
Consequently, as $\wh{r}=1/\norm{\wh{\Sigma}}$, 
\[ 
\left(\frac{1-\tau_+(u)}{1+\tau_+(u)}\right)\wh{r} 
\leq \frac{1-\tau_+(u)}{\norm{\wh{\Sigma}_m}} 
\leq \wh{r}\leq \frac{1+\tau_+(u)}{\norm{\wh{\Sigma}_m}} 
\]
Recalling $\wh{r}\leq D$ always, we choose $m=\alpha D\log(2D/\delta)$ and $u=\log(2/\delta)$ so that 
\[
\tau_+(\log(2/\delta))\leq \frac{(4/3)\wh{r}}{\alpha D}+\sqrt{\frac{2\wh{r}}{\alpha D}}
\leq \frac4{3\alpha}+\sqrt{\frac2{\alpha}}\leq \frac2{3} \qtext{for} \alpha\geq 8.  \qedhere
\]
\end{proof}


\subsection*{Acknowledgements}
This research was performed while the author held an NRC Research Associateship award at the U.S. Air Force Research Laboratory.
I should like to thank Mary, Saint Joseph, and the Holy Trinity for helping me with this work. 

\subsection*{Disclaimers}
The views expressed are those of the author and do not reflect the official guidance or position of the United States Government, the Department of Defense, or of the United States Air Force. 

Statement from the DoD: The appearance of external hyperlinks does not constitute endorsement by the United States Department of Defense (DoD) of the linked websites, or the information, products, or services contained therein. The DoD does not exercise any editorial, security, or other control over the information you may find at these locations.

\section{Appendix}\label{app}
The following lemma shows how to modify the proof of the Hanson-Wright inequality from~\cite{RudelsonVershyninHW2013} (cf.~\cite[chapter~6]{VershyninHDP2018}) to a \qt{bulk} version, looking at the sum of several $\iid$ quadratic forms. 
Note $Z$ here is $Z^\top$ in the main part of the paper. 
Let $Z$ be a $D\times k$ matrix entries $Z_{ij}$ drawn $\iid$ from a mean-zero unit-variance sub-gaussian distribution. 
Write $Z(:,j)$ for the $j$th column of $Z$. 
Let $B$ be a $D\times D$ matrix and
\[
S = \sum_{j=1}^k Z(:,j)^\top B Z(:,j)
\]
Note, with $B=(b_{ij})_{i,j=1}^D$, 
\begin{equation}\label{eqn:ZSumForHW}
Z(:,j)^\top B Z(:,j) = \sum_{q,\,l} Z_{qj} b_{ql} Z_{lj}
\qtext{and}
\E Z(:,j)^\top B Z(:,j) = \sum_q b_{qq} \E Z_{qj}^2, 
\end{equation}
using the mean zero and independence assumptions for the coordinates of $Z(:,j)$, that is for the $Z_{ij}$ when $i$ varies.

\begin{lem}\label{lem:kHansonWright}
Let $S$, $B$, and $Z$ be as above. 
Then for all $t\geq 0$ and either sign choice,
\[
\Prob\set{\pm(S-\E S)\geq kt}
\leq 2\exp\left(-ck\min\left(\frac{t^2}{K^4\norm{B}_F^2},\,\frac{t}{K^2\norm{B}}\right)\right) 
\]
with $c$ an absolute constant (not depending on $Z$) and $K=\norm{Z_{11}}_{\psi_2}$. 
\end{lem}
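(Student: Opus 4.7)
The plan is to follow the standard Hanson–Wright argument (as in Rudelson--Vershynin) but to exploit the independence across the $k$ columns of $Z$ so that $k$ comes out as a prefactor in the exponent rather than as a multiplicative factor on $t$ inside the $\min$. Write $S-\E S=\sum_{j=1}^k (Q_j-\E Q_j)$ where $Q_j:=Z(:,j)^\top B Z(:,j)$, and observe from equation~\eqref{eqn:ZSumForHW} and the iid assumption across $j$ that the $Q_j-\E Q_j$ are iid mean-zero random variables. The entire task then reduces to (a) getting a good MGF bound for a single $Q_j-\E Q_j$, and (b) tensorizing and optimizing.

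For step (a) I would quote the \emph{single-vector} Hanson–Wright MGF estimate in the standard form: for any fixed $D\times D$ matrix $B$ and a random vector $\zeta\in\R^D$ with iid mean-zero unit-variance sub-gaussian coordinates of $\psi_2$-norm at most $K$, one has
\[
\E\exp\bigl(\lambda(\zeta^\top B\zeta-\E \zeta^\top B\zeta)\bigr)
\leq \exp\bigl(C K^4\lambda^2\norm{B}_F^2\bigr)
\quad\text{whenever}\quad \abs{\lambda}\leq \frac{c}{K^2\norm{B}}.
\]
This is exactly what Rudelson--Vershynin prove by splitting $B$ into its diagonal part (handled by a Bernstein-type bound on a sum of independent sub-exponentials) and its off-diagonal part (handled by a decoupling argument followed by a conditional Gaussian comparison). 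Apply this with $\zeta=Z(:,j)$ for each $j$.

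For step (b), independence across columns gives, for $\abs{\lambda}\leq c/(K^2\norm{B})$,
\[
\E\exp\bigl(\lambda(S-\E S)\bigr)
=\prod_{j=1}^k \E\exp\bigl(\lambda(Q_j-\E Q_j)\bigr)
\leq \exp\bigl(C K^4 k\lambda^2\norm{B}_F^2\bigr).
\]
Markov's inequality then yields
\[
\Prob\set{S-\E S\geq kt}\leq \exp\bigl(-\lambda kt+CK^4 k\lambda^2\norm{B}_F^2\bigr),
\]
and I would optimize over $\lambda\in[0,\,c/(K^2\norm{B})]$ in the usual Bernstein fashion: the unconstrained optimum $\lambda^\ast=t/(2CK^4\norm{B}_F^2)$ lies in the allowed range precisely when $t\leq 2cCK^2\norm{B}_F^2/\norm{B}$, giving the sub-gaussian branch $\exp(-c'kt^2/(K^4\norm{B}_F^2))$; otherwise the optimum sits at the boundary $\lambda=c/(K^2\norm{B})$, giving the sub-exponential branch $\exp(-c'kt/(K^2\norm{B}))$. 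Combining the two regimes yields the stated $\min$ bound. The lower tail $-(S-\E S)\geq kt$ is handled identically by replacing $\lambda$ with $-\lambda$ throughout, since the MGF bound is symmetric in $\lambda$.

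The only nontrivial ingredient is step (a); everything else is independence and a standard Chernoff optimization. Since step (a) is already an established result, the main thing to be careful about is just tracking that the constants $c$ and $C$ do not depend on $k$ or on $B$ (only on the sub-gaussian parameter $K$), so that the tensorization in step (b) simply produces a clean factor of $k$ in front of $\lambda^2\norm{B}_F^2$ and therefore in front of the final exponent.
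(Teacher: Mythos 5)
Your proposal is correct and follows essentially the same route as the paper: both exploit the iid structure across the $k$ columns to tensorize per-column Hanson--Wright MGF estimates and then conclude by a Chernoff optimization, with the factor $k$ emerging as the power on the single-column MGF. The only cosmetic difference is that the paper first splits $S-\E S$ into its diagonal and off-diagonal parts, applies a union bound, and tensorizes each part separately (taking $k$-th powers of the per-column terms $\wp_1,\wp_2$ before quoting the Rudelson--Vershynin bounds), whereas you quote a single unified per-column MGF bound valid for $\abs{\lambda}\leq c/(K^2\norm{B})$ and optimize once; that unified bound is not literally the form Rudelson--Vershynin prove, but it follows from their two separate estimates (e.g.\ by Cauchy--Schwarz on the product of the two MGFs, at the cost only of absolute constants), so nothing is lost.
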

\begin{rem}
The key point is the additional factor of $k$ in the exponential, compared to the usual Hanson-Wright inequality where $k=1$. 
Here,
\[
\norm{Z}_{\psi_2}=\inf\set{t>0\st \E\exp(Z^2/t)\leq 2}, 
\]
so in particular $\norm{CZ}_{\psi_2}=C\norm{Z}_{\psi_2}$, and $\norm{Z/K}_{\psi_2}=1$. 
If we prove the result for $Z/K$, that is bound 
\[
\Prob\set{\abs{\sum_{j=1}^k \frac{Z(:,j)^\top}{K} B \frac{Z(:,j)}{K}
-\E\sum_{j=1}^k \frac{Z(:,j)^\top}{K} B \frac{Z(:,j)}{K}}\geq kt}, 
\]
then taking $t\mapsto t/K^2$ will give us the bound for the original $Z(:,j)$. 
\end{rem}
\begin{proof}
By equation~\ref{eqn:ZSumForHW}, 
\[
S - \E S= \sum_{j=1}^k \sum_q b_{qq}(Z_{qj}^2 - \E Z_{qj}^2)
+ \sum_{j=1}^k \sum_{q\neq l} b_{ql} Z_{qj} Z_{lj} =: S_1 + S_2. 
\]
By the union bound (Boole's inequality), we can bound the probability 
\[
p:=\Prob\set{S - \E S\geq kt} 
\leq \Prob\set{S_1\geq kt/2} + \Prob\set{S_2\geq kt/2}
=: p_1+p_2, 
\]
for if $S_1<kt/2$ and $S_2<kt/2$, then $S-\E S<kt$. 

We can now use the $\iid$ assumption for the columns, that is, for the $Z_{ij}$ when $j$ varies, 
\begin{align*}
p_1&\leq e^{-kt\lambda_1/2}\E\exp(\lambda_1 S_1)
=\left(e^{-t\lambda_1/2}\E\exp\left(\lambda_1\sum_q b_{qq}(Z_q^2-\E Z_q^2)\right)\right)^k
=\wp_1^k
\end{align*}
and
\begin{align*}
p_2&\leq e^{-kt\lambda_2/2}\E\exp(\lambda_2 S_2)
=\left(e^{-t\lambda_2/2}\E\exp\left(\lambda_2\sum_{q\neq l} b_{ql} Z_q Z_l\right)\right)^k
=\wp_2^k
\end{align*}
The terms $\wp_1$ and $\wp_2$ are the starting points for establishing a proof of the Hanson-Wright inequality~\cite[page~133]{VershyninHDP2018}; the former is for using Bernstein's inequality, while the latter uses decoupling and comparison to the case when $Z$ is a standard Gaussian random vector. 
Consequently, we can use the bounds for $\wp_1$ and $\wp_2$, which both are given by
\[
\max\set{\wp_1,\wp_2}\leq \exp\left(-c\min\left(\frac{t^2}{\norm{B}_F^2},\,\frac{t}{\norm{B}}\right)\right), 
\]
with $c$ an absolute constant (not depending on the distribution of $Z$, as we already rescaled $Z$ to have unit $\psi_2$-norm entries). 
Recalling the $k$th powers, we finally have
\[
p\leq p_1+p_2\leq 2\exp\left(-ck\min\left(\frac{t^2}{\norm{B}_F^2},\,\frac{t}{\norm{B}}\right)\right).\qedhere 
\]
\end{proof}

\begin{lem}\label{lem:HWstableranks}
Let $Z$ be a $k\times D$ random matrix with $\iid$ mean-zero unit-variance sub-gaussian entries. 
Then for a matrix $A$ with columns in $\R^D$,  
\[
\Prob\set{\pm(\norm{ZA}_F^2-k\norm{A}_F^2)\geq k\epsilon\norm{A}_F^2}
\leq 2\exp\left(-ck\min\left(\frac{\epsilon^2r_4(A)}{K^4},\,\frac{\epsilon \,r_\infty(A)}{K^2}\right)\right) 
\]
with $K=\norm{Z_{11}}_{\psi_2}$. 
\end{lem}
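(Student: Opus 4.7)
The plan is to reduce this to Lemma~\ref{lem:kHansonWright} by recognizing that $\norm{ZA}_F^2$ is a sum of $\iid$ quadratic forms in the rows of $Z$. Writing $Z_i$ for the $i$th row of $Z$ (which is the $i$th column of $Z^\top$, matching the convention in Lemma~\ref{lem:kHansonWright}) and using $\norm{ZA}_F^2=\sum_{i,j}(Z_i,A_j)^2$ where $A_j$ are the columns of $A$, I would first observe that
\[
\norm{ZA}_F^2 \;=\; \sum_{i=1}^k Z_i\,(AA^\top)\,Z_i^\top \;=\; \sum_{i=1}^k Z(:,i)^\top B\, Z(:,i)
\qquad\text{with}\qquad B:=AA^\top.
\]
This is exactly the random variable $S$ in Lemma~\ref{lem:kHansonWright}.

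Next I would compute $\E S$. Since the entries of $Z$ are independent, mean-zero and unit-variance, the formula $\E Z(:,j)^\top B Z(:,j) = \sum_q b_{qq} \E Z_{qj}^2 = \tr B$ from equation~\eqref{eqn:ZSumForHW} gives $\E S = k\,\tr(AA^\top) = k\norm{A}_F^2$. Therefore the event $\pm(\norm{ZA}_F^2 - k\norm{A}_F^2)\geq k\epsilon\norm{A}_F^2$ is precisely $\pm(S-\E S)\geq kt$ with $t=\epsilon\norm{A}_F^2$, and Lemma~\ref{lem:kHansonWright} applies directly.

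Finally, I would translate the operator and Frobenius norms of $B=AA^\top$ back into the stable-rank quantities. We have $\norm{B}=\norm{AA^\top}=\norm{A}^2$ and $\norm{B}_F=\norm{AA^\top}_F$, so
\[
\frac{t^2}{K^4\norm{B}_F^2}=\frac{\epsilon^2\norm{A}_F^4}{K^4\norm{AA^\top}_F^2}=\frac{\epsilon^2\,r_4(A)}{K^4},
\qquad
\frac{t}{K^2\norm{B}}=\frac{\epsilon\norm{A}_F^2}{K^2\norm{A}^2}=\frac{\epsilon\,r_\infty(A)}{K^2},
\]
and plugging these into the conclusion of Lemma~\ref{lem:kHansonWright} yields exactly the stated bound. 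There is no real obstacle here beyond bookkeeping between the $k\times D$ versus $D\times k$ conventions and verifying that the expected value comes out to $k\norm{A}_F^2$; the probabilistic content is entirely carried by the previously established bulk Hanson--Wright estimate.
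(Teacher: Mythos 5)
Your proposal is correct and follows essentially the same route as the paper's own proof: both apply Lemma~\ref{lem:kHansonWright} with $B=AA^\top$ (after swapping the $k\times D$ and $D\times k$ conventions), compute $\E S = k\norm{A}_F^2$, and substitute $t=\epsilon\norm{A}_F^2$ to convert $\norm{B}$ and $\norm{B}_F$ into $r_\infty(A)$ and $r_4(A)$. Nothing is missing.
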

\begin{proof}
We use lemma~\ref{lem:kHansonWright}, with $B=AA^\top$, and $Z\mapsto Z^\top$, for then the rows $Z_j$ of $Z$ are written as column vectors, so that
\[
Z_j^\top B Z_j = \norm{A^\top Z_j}_2^2, 
\quad S = \sum_{j=1}^k \norm{A^\top Z_j}_2^2 = \norm{ZA}_F^2, 
\qtext{and}
\E S = k\norm{A}_F^2. 
\]
Using $\norm{B}=\norm{A}^2$, we recover 
\[
\Prob\set{\pm(S-k\norm{A}_F^2)\geq kt}
\leq \exp\left(-ck\min\left(\frac{t^2}{K^4\norm{AA^\top}_F^2},\,\frac{t}{K^2\norm{A}^2}\right)\right). 
\]
Because 
\[
r_\infty(A)
=\frac{\norm{A}_F^2}{\norm{A}^2}
\qtext{and}
r_4(A)
=\frac{\norm{A}_F^4}{\norm{AA^\top}_F^2}, 
\]
the choice $t=\epsilon\norm{A}_F^2$ yields the result.
\end{proof}

If the reader would prefer explicit constants, the following lemma may be convenient, and gives an alternative proof for lemma~\ref{lem:HWstableranks} in the Gaussian case, relying on the explicit moment generating function for the Gaussian distribution.
\begin{lem}\label{lem:DirectBulkHWGaussianCase}
Let $Z$ be a $k\times D$ random matrix with $\iid$ standard Gaussian entries. 
Then for a matrix $A$ with columns in $\R^D$,  
\[
\Prob\set{\norm{ZA}_F^2>(1+\epsilon)k\norm{A}_F^2}
\leq \exp\left(-k\frac\epsilon{8}
    \min\set{\epsilon\,r_4(A),\, r_\infty(A)}\right)
\]
for $\epsilon>0$. 
Also, when $\epsilon\in(0,1)$, 
\[
\Prob\set{\norm{ZA}_F^2>(1+\epsilon)k\norm{A}_F^2}
\leq \exp\left(-k\frac{\epsilon^2}{8}r_\infty(A)\right)
\]
and  
\[
\Prob\set{\norm{ZA}_F^2< (1-\epsilon)k\norm{A}_F^2}
\leq \exp\left(-k\frac{\epsilon^2}{4}r_4(A)\right).  
\]
\end{lem}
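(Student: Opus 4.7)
}
The plan is to reduce $\norm{ZA}_F^2$ to a weighted sum of independent chi-squared random variables via the SVD, then apply the Chernoff method with the exact Gaussian quadratic-form MGF, and finally optimize $\lambda$ over two regimes corresponding to the two terms in the $\min$.

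First I would use the SVD $A=U\Sigma V^\top$ and the right-orthogonal invariance of the Frobenius norm to write $\norm{ZA}_F^2=\norm{(ZU)\Sigma}_F^2$. Since $Z$ has $\iid$ standard Gaussian entries, $ZU\overset{d}{=}Z$, so
\[
\norm{ZA}_F^2\;\overset{d}{=}\;\sum_{l=1}^p\sigma_l^2\,\chi^2_{l,k},
\]
with independent $\chi^2_{l,k}$ each being a chi-squared random variable with $k$ degrees of freedom. Setting $q_l:=\sigma_l^2/\norm{A}_F^2$, we get $\sum_l q_l=1$, $\sum_l q_l^2=1/r_4(A)$, and $\max_l q_l=1/r_\infty(A)$, so the problem becomes bounding the tails of $\sum_l q_l\chi^2_{l,k}$ around its mean $k$.

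For the upper tail, I would apply Markov to $\exp(\lambda\sum_l q_l\chi^2_{l,k})$, whose MGF is $\prod_l(1-2\lambda q_l)^{-k/2}$ for $\lambda\in(0,1/(2q_{\max}))$, yielding the exponent (per $k$)
\[
-\tfrac12\sum_l\log(1-2\lambda q_l)-\lambda(1+\epsilon).
\]
Using the elementary inequality $-\log(1-x)\leq x+x^2$ valid for $x\in[0,1/2]$ (i.e.\ for $\lambda\leq r_\infty/4$), this exponent is bounded by $2\lambda^2/r_4(A)-\lambda\epsilon$. Minimizing this quadratic over the admissible interval splits into two cases: if the unconstrained optimum $\lambda^\ast=\epsilon r_4(A)/4$ lies in $[0,r_\infty/4]$, i.e., $\epsilon r_4(A)\leq r_\infty(A)$, then the value $-\epsilon^2 r_4(A)/8$ is attained; otherwise ($\epsilon r_4(A)>r_\infty(A)$) the quadratic is still decreasing at the boundary $\lambda=r_\infty/4$, plugging which in and using $r_\infty/r_4\leq\epsilon$ yields an exponent of at most $-\epsilon r_\infty(A)/8$. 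Unifying the two cases produces the claimed bound $\exp(-k\epsilon/8\cdot\min(\epsilon r_4(A),r_\infty(A)))$. The second displayed inequality for $\epsilon\in(0,1)$ then follows immediately because $r_4\geq r_\infty$ implies $\min(\epsilon r_4,r_\infty)\geq \epsilon r_\infty$.

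For the lower tail, I would apply Markov to $\exp(-\lambda\sum_l q_l\chi^2_{l,k})$ for $\lambda>0$, giving the exponent (per $k$) of $\tfrac12\sum_l\log(1+2\lambda q_l)^{-1}+\lambda(1-\epsilon)$. Here the cleaner bound $\log(1+x)\geq x-x^2/2$ holds for all $x\geq 0$ with no constraint on $\lambda$, reducing the exponent to $\lambda^2/r_4(A)-\lambda\epsilon$; minimizing over $\lambda>0$ at $\lambda^\ast=\epsilon r_4(A)/2$ yields $-\epsilon^2 r_4(A)/4$, producing the third inequality. The main obstacle is the case split in the upper tail: verifying that the boundary choice $\lambda=r_\infty/4$ still delivers the $\exp(-k\epsilon r_\infty/8)$ factor requires the observation that, in the regime $\epsilon r_4>r_\infty$, the quadratic $2\lambda^2/r_4-\lambda\epsilon$ has not yet reached its minimum on $[0,r_\infty/4]$, so plugging in the endpoint suffices, and the condition $r_\infty/r_4<\epsilon$ is exactly what is needed to absorb the $\lambda^2/r_4$ term into half of the $\lambda\epsilon$ term.
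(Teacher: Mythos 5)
Your proposal is correct and follows essentially the same route as the paper's proof: reduce to a weighted sum of independent chi-squares via the SVD and rotation invariance, apply the Chernoff method with the exact chi-squared MGF, bound $-\log(1-x)$ by $x+x^2$, and split the upper-tail optimization according to whether the unconstrained minimizer falls in the admissible range. Your bookkeeping via the normalized weights $q_l$ and the explicit constrained minimization over $[0,r_\infty/4]$ is a cleaner presentation of what the paper does with an auxiliary function $\tilde h$ and a free parameter $\alpha$, but the underlying argument and constants match.
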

Note $r_4(A)\geq r_\infty(A)$ always. 
When $\epsilon\, r_4(A)\geq r_\infty(A)$, there is a savings of one factor of $\epsilon$ in the upper tail; however, for our applications, we do not know the relative sizes of $r_4(A)$ and $r_\infty(A)$, so the $k\epsilon^2r_\infty(A)/8$ bound was included. 
%
\begin{proof}
Let $A=U\Sigma V^\top$ be the SVD of $A$, with $\Sigma=\diag(\vec{\sigma})$ the diagonal matrix of singular values. 
So $A^\top = V\Sigma U^\top$ and by the rotation invariance of standard Gaussian vectors, $A^\top$ acts on a row $Z_i$ of $Z$ as 
\[
A^\top Z_i=V\Sigma U^\top Z_i \sim V\Sigma g_i \qtext{with} g_i\in \R^D
\]
and consequently 
\[
\norm{A^\top g_i}_2^2\sim g_i^\top \Sigma V^\top V\Sigma g_i 
= g_i^\top \Sigma^2 g_i
=\sum_j \sigma_j^2 g_{ij}^2
\]
with the $g_{ij}$ independent standard Gaussian. 

We then have
\[
\norm{ZA}_F^2 = \norm{A^\top Z^\top}_F^2 = \sum_{i=1}^k \norm{A^\top g_i}_2^2 
\]
and for $\lambda>0$ to be determined
\begin{align*}
\Prob\set{\norm{ZA}_F^2>(1+\epsilon)k\norm{A}_F^2}
&\leq e^{-\lambda(1+\epsilon)k\norm{A}_F^2}\E\exp\left(\lambda \sum_{i=1}^k \norm{A^\top g_i}_2^2\right)\\ 
&=\left(e^{-\lambda(1+\epsilon)\norm{A}_F^2}\E \exp\left(\lambda \norm{A^\top g_1}_2^2\right)\right)^k
\end{align*}
with $g_1\in \R^D$ standard Gaussian,  having used the indepdence of the \emph{rows} $\set{g_i}$. 
We can now use the independence of the \emph{columns}, here via the coordinates of $g_1$: 
\begin{align*}
\E\exp(\lambda\norm{A^\top g_1}_2^2)=\prod_j \E\exp(\lambda \sigma_j^2g_{1j}^2)
=\prod_j (1-2\lambda\sigma_j^2)^{-1/2}
\end{align*}
provided $\lambda<1/(2\sigma_1^2)$
via change of variables $y=(1-2\lambda\sigma_j^2)^{1/2}\,x$, 
\begin{align*}
\E\exp(\lambda \sigma_j^2g_{1j}^2)
=\frac1{\sqrt{2\pi}}\int_{-\infty}^\infty \exp\left(\lambda \sigma_j^2 x^2-\frac{x^2}{2}\right)\,dx=(1-2\lambda\sigma_j^2)^{-1/2}. 
\end{align*}
On $[0,1/2]$, the function $x\mapsto e^{x+x^2}(1-x)$ is increasing from $1$, while on $[1/2, 2/3]$ it is decreasing and still greater than 1, as $(10/9)>\log(3)$, so 
\[
\E\exp(\lambda\sigma_j^2 g_{1j}^2)\leq \exp\big(\lambda \sigma_j^2+2\lambda^2\sigma_j^4\big)
\qtext{certainly when} 2\lambda\sigma_1^2\leq  2/3, 
\]
leaving us to minimize
\[
h(\lambda):=-\lambda(1+\epsilon)\norm{A}_F^2+\sum_j (\lambda\sigma_j^2+2\lambda^2\sigma_j^4)
=-\lambda\epsilon\sum_j \sigma_j^2 + 2\lambda^2\sum_j \sigma_j^4.
\]
There will turn out to be two cases. 
If we minimize $h(\lambda)$ directly, the minimizer is  
\[
\lambda^\ast 
    = \frac{\epsilon}{4}\frac{\sum_j \sigma_j^2}{\sum_j \sigma_j^4}
\qtext{at which}
h(\lambda^\ast)
=-\frac{\epsilon^2}{8}\frac{\left(\sum_j \sigma_j^2\right)^2}
    {\sum_j \sigma_j^4}
=-\frac{\epsilon^2}{8} r_4(A). 
\]
This estimate still requires $2\lambda^\ast\sigma_1^2<1$, so if we require $2\lambda^\ast\sigma_1^2\leq 1/2$, we force 
\begin{align*}
\frac1{\epsilon}r_\infty(A)
    =\frac1{\epsilon}\frac{\sum_j \sigma_j^2}{\sigma_1^2}
    >\frac1{\epsilon}4\lambda^\ast\sum_j \sigma_j^2
    =r_4(A). 
\end{align*}
Because we always have $r_4(A)\geq r_\infty(A)$, we can use the $h(\lambda^\ast)$ value when $r_4(A)$ and $r_\infty(A)$ are \qt{comparable} and $\epsilon\in (0,1)$. 

For the other case, when $r_4(A)\geq \epsilon^{-1}r_\infty(A)$, we have
\[
\sum_j \sigma_j^4\leq \epsilon\,\sigma_1^2\sum_j\sigma_j^2
\]
and can upper bound $h(\lambda)$ by $\tilde{h}(\lambda)$ defined as
\[
\tilde{h}(\lambda)
=-\lambda\epsilon\sum_j\sigma_j^2
    +2\lambda^2\alpha\epsilon\,\sigma_1^2\sum_j\sigma_j^2 
\qtext{for any} \alpha\geq 1. 
\]
The minimizer for $\tilde{h}(\lambda)$ is
\[
\tilde{\lambda}^\ast=\frac1{4\alpha\sigma_1^2} 
\qtext{at which}
\tilde{h}(\tilde{\lambda}^\ast)
=-\frac{\epsilon}{\alpha 8}r_\infty(A), 
\]
and this $\tilde{\lambda}^\ast$ also satisfies $2\tilde{\lambda}^\ast\sigma_1^2\leq 1/2<1$ whenever $\alpha\geq 1$. 

When $\epsilon\in (0,1)$, we can also avoid the distinction between the two cases by noting $\sigma_1\geq \sigma_j$ for all $j$, so that 
\[
\sum_j\sigma_j^4\leq \sigma_1^2\sum_j\sigma_j^2
\qtext{which corresponds to taking $\alpha=1/\epsilon$ in the above.} 
\]

For the lower tail, with $\lambda<0$, 
\begin{align*}
\Prob\set{\norm{ZA}_F^2<(1-\epsilon)k\norm{A}_F^2}
\leq e^{-\lambda(1-\epsilon)k\norm{A}_F^2}\E\exp(\lambda \norm{ZA}_F^2)\\
=\left(e^{-\lambda(1-\epsilon)\norm{A}_F^2}
    \prod_j\E\exp(\lambda\sigma_j^2 g_{1j}^2)\right)^k. 
\end{align*}
Because $\lambda<0$, we can estimate the moment generating function in two ways. 
From $e^x\leq 1+x+x^2/2$ for $x\leq 0$, we find
\[
\E\exp(\lambda\sigma_j^2 g_{1j}^2)\leq 1+\lambda\sigma_j^2
    +(3/2)\lambda^2\sigma_j^4
\leq \exp\left(\lambda\sigma_j^2+(3/2)\lambda^2\sigma_j^4\right)
\]
while if we use that $e^{x+x^2/2}(1-x)$ is decreasing to 1 for $x\leq 0$, 
\[
\E\exp(\lambda\sigma_j^2 g_{1j}^2)=(1-2\lambda\sigma_j^2)^{-1/2}
\leq \exp\left(\lambda\sigma_j^2+\lambda^2\sigma_j^4\right). 
\]
Minimizing
\[
h_-(\lambda)=-\lambda(1-\epsilon)\norm{A}_F^2
    +\sum_j(\lambda\sigma_j^2+\beta\lambda^2\sigma_j^4)
    =\lambda\epsilon\norm{A}_F^2+\beta\lambda^2\sum_j\sigma_j^4
\]
yields
\[
h_-(\lambda_-^\ast)=-\frac{\epsilon^2}{4\beta}r_4(A) 
\qtext{at}  
\lambda_-^\ast = -\frac{\epsilon}{2\beta}\frac{\norm{A}_F^2}{\sum_j \sigma_j^4}
\]
with $\beta=3/2$ corresponding to the Taylor expansion bound and $\beta=1$ corresponding to the function bound. 

Putting everything together, and remembering the $k$th power outside, we complete the lemma. 
\end{proof}

The next lemma makes the connection between equation~\eqref{eqn:JLSquaredDistancesPairwise} and equation~\eqref{eqn:JLUnsquaredDistancesPairwise} explicit, and is informed by the form of the target dimension derived from the tail bound rates above. 
In the Gaussian case, $C_+=8$ and $C_-=4$. 
\begin{lem}\label{lem:EpsilonAdjustment}
For $0<\epsilon<1$ and $C_\pm>0$, the requirements 
\[
\frac{C_+}{\wEps_+^2}=\frac{C_-}{\wEps_-^2}, 
\quad
\frac{(1-\epsilon)^2}{\gamma(\epsilon)}=1-\wEps_-
\qtext{and}
\frac{(1+\epsilon)^2}{\gamma(\epsilon)}=1+\wEps_+
\]
have solution $\wEps_+=\theta\,\wEps_-$ with $\theta=\sqrt{C_+/C_-}$, 
\[
1>\wEps_-=\frac{4\epsilon}{(1+\epsilon)^2+\theta(1-\epsilon)^2}, 
\qtext{and}
\gamma(\epsilon)=\frac{(1+\epsilon)^2+\theta(1-\epsilon)^2}{1+\theta}. 
\]
\end{lem}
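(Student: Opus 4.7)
The plan is to treat this as a small algebraic system in the two unknowns $\wEps_-$ and $\gamma(\epsilon)$, after first eliminating $\wEps_+$ using the first equation.

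First I would note that the first equation $C_+/\wEps_+^2 = C_-/\wEps_-^2$ immediately yields $\wEps_+ = \theta\,\wEps_-$ with $\theta = \sqrt{C_+/C_-}$ (taking the positive root, since both $\wEps_\pm$ must be positive tolerances). Substituting into the remaining two equations reduces the problem to
\[
(1-\epsilon)^2 = \gamma(\epsilon)(1-\wEps_-), \qquad (1+\epsilon)^2 = \gamma(\epsilon)(1+\theta\,\wEps_-).
\]

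Next I would extract $\gamma(\epsilon)$ by taking the linear combination that eliminates $\wEps_-$: multiply the first equation by $\theta$ and add to the second, so the $\wEps_-$ terms cancel and one obtains
\[
(1+\epsilon)^2 + \theta(1-\epsilon)^2 = \gamma(\epsilon)(1+\theta),
\]
which gives the claimed formula for $\gamma(\epsilon)$. Then I would extract $\wEps_-$ by taking the complementary combination (subtract the second from the first in the original pair), using $(1+\epsilon)^2 - (1-\epsilon)^2 = 4\epsilon$ and $\wEps_+ + \wEps_- = (1+\theta)\wEps_-$, so
\[
4\epsilon = \gamma(\epsilon)(1+\theta)\wEps_-.
\]
Substituting the expression for $\gamma(\epsilon)(1+\theta)$ just derived cancels the $(1+\theta)$ factor and produces the stated closed form for $\wEps_-$.

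The only non-routine check is $\wEps_- < 1$. I would verify this by clearing denominators: the inequality $4\epsilon < (1+\epsilon)^2 + \theta(1-\epsilon)^2$ rearranges to $(1+\theta)(1+\epsilon^2) > 2\epsilon(1+\theta)$, i.e.\ $(1-\epsilon)^2 > 0$, which holds for $\epsilon \in (0,1)$. This is the only place the hypothesis $\epsilon < 1$ enters, and there is no real obstacle, the whole argument being a direct computation.
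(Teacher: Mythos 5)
Your proof is correct and essentially identical to the paper's: derive $\wEps_+ = \theta\wEps_-$ from the first equation, then form the linear combination $\theta\cdot(\text{second}) + (\text{third})$ to get $\gamma(\epsilon)$ and the difference $(\text{third})-(\text{second})$ to get $\wEps_-$. The only cosmetic difference is in verifying $\wEps_-<1$: the paper rewrites the denominator as $(1+\theta)(1-\epsilon)^2+4\epsilon$ so the bound is immediate, whereas you rearrange the inequality directly to $(1+\theta)(1-\epsilon)^2>0$ — the same computation either way.
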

\begin{proof}
The first equation gives $\wEps_+=\theta\,\wEps_-$. 
Taking $\theta$ times the second equation and adding it to the third gives the equation for $\gamma(\epsilon$. 
Subtracting the second equation from the third yields
\[
(1+\theta)\wEps_-=\frac{(1+\epsilon)^2-(1-\epsilon)^2}{\gamma(\epsilon)}
=\frac{4\epsilon}{\gamma(\epsilon)}. 
\]
Conclude 
\[
\wEps_-=\frac{4\epsilon}{(1+\epsilon)^2+\theta(1-\epsilon)^2}
=\frac{4\epsilon}{(1+\theta)(1-\epsilon)^2+4\epsilon}<1. \qedhere
\]
\end{proof}

\begin{lem}\label{lem:binomEstimate}
For $1\leq j\leq \floor{M/2}$, 
\[
\binom{M}{j} \leq \left(\frac{eM}{j}\right)^j. 
\]
\end{lem}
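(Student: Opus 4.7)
The plan is to use the two-step textbook argument: first bound $\binom{M}{j}$ by $M^j/j!$, then estimate $j!$ from below by $(j/e)^j$.

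First I would write $\binom{M}{j} = \frac{M(M-1)\cdots(M-j+1)}{j!}$ and discard the factors $(M-i)/M \leq 1$ in the numerator to obtain $\binom{M}{j} \leq M^j/j!$. This step uses nothing beyond $M-i \leq M$ for $0 \leq i \leq j-1$, and in particular does not require $j \leq \lfloor M/2 \rfloor$.

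Next I would prove the lower bound $j! \geq (j/e)^j$. The slickest way is to expand the exponential series: $e^j = \sum_{k \geq 0} j^k/k! \geq j^j/j!$, so $j! \geq j^j e^{-j} = (j/e)^j$. Combining with the previous step, $\binom{M}{j} \leq M^j/(j/e)^j = (eM/j)^j$, which is the desired inequality.

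There is no real obstacle here; the only subtlety is that the hypothesis $1 \leq j \leq \lfloor M/2 \rfloor$ is not actually needed for this bound (the argument works for all $1 \leq j \leq M$), so the proof will simply note this and still invoke the hypothesis as stated. A brief remark at the end can observe that this version is sufficient for the applications in the main text, where the regime of interest is $j = \eta M$ with $\eta \in (0, 1/2)$.
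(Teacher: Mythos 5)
Your proof is correct and follows essentially the same route as the paper: bound $\binom{M}{j}\leq M^j/j!$ by the product formula, then use the exponential-series estimate $e^j \geq j^j/j!$ to get $j!\geq (j/e)^j$. Your observation that the hypothesis $j\leq\lfloor M/2\rfloor$ is unnecessary for this bound is accurate; the paper simply states it in the regime where it is applied.
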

\begin{proof}
First note $j!\geq (j/e)^j$ by
\[
\frac{j^j}{j!}\leq \sum_{i=0}^\infty \frac{j^i}{i!}=e^j. 
\qtext{We then have}
\binom{M}{j}=\frac{M!}{j!(M-j)!} = \frac1{j!}\prod_{i=0}^{j-1} (M-i)
\leq \frac{M^j}{j!}\leq \left(\frac{eM}{j}\right)^j
\]
from our lower bound for $j!$. 
\end{proof}


\singlespacing
\bibliographystyle{amsalphazentralblatteprint}
\renewcommand\refname{Literature Cited}
\bibliography{Bulk_JL} 

\providecommand{\bysame}{\leavevmode\hbox to3em{\hrulefill}\thinspace}
\providecommand{\ZM}{\relax\ifhmode\unskip\space\fi Zbl }
\providecommand{\MR}{\relax\ifhmode\unskip\space\fi MR }
\providecommand{\arXiv}[1]{\relax\ifhmode\unskip\space\fi\href{http://arxiv.org/abs/#1}{arXiv:#1}}
\providecommand{\MRhref}[2]{%
  \href{http://www.ams.org/mathscinet-getitem?mr=#1}{#2}
}
\providecommand{\href}[2]{#2}
\begin{thebibliography}{HMT10}

\bibitem[Als08]{AlspachWonderfulWalecki2008}
Brian Alspach, \emph{The wonderful {Walecki} construction}, Bulletin of the
  Institute of Combinatorics and its Applications \textbf{52} (2008), 7--20
  (English).

\bibitem[GVL13]{GolubVanLoan2013}
Gene~H. Golub and Charles~F. Van~Loan, \emph{Matrix computations}, 4th ed. ed.,
  The Johns Hopkins University Press, Baltimore, MD, 2013 (English).

\bibitem[HMT10]{HalkoMartinssonTropp2010}
Nathan Halko, Per-Gunnar Martinsson, and Joel~A. Tropp, \emph{Finding structure
  with randomness: {Probabilistic} algorithms for constructing approximate
  matrix decompositions}, December 2010, arXiv:0909.4061 [math] version: 2.

\bibitem[JL84]{JohnsonLindenstrauss1984}
William~B. Johnson and Joram Lindenstrauss, \emph{Extensions of {Lipschitz}
  mappings into a {Hilbert} space}, Conference in modern analysis and
  probability ({New} {Haven}, {Conn}., 1982), Contemp. {Math}., vol.~26, Amer.
  Math. Soc., Providence, RI, 1984, pp.~189--206. \MR{737400}

\bibitem[KM05]{KlartagMendelson2005}
B.~Klartag and S.~Mendelson, \emph{Empirical processes and random projections},
  Journal of Functional Analysis \textbf{225} (2005), no.~1, 229--245 (en).

\bibitem[LN14]{LarsenNelsonOptimal2014}
Kasper~Green Larsen and Jelani Nelson, \emph{The {Johnson}-{Lindenstrauss}
  lemma is optimal for linear dimensionality reduction}, arXiv:1411.2404 [cs,
  math] (2014), arXiv: 1411.2404.

\bibitem[Luc82]{LucasRecreations1882}
E.~Lucas, \emph{Récréations mathématiques}, 1882, Published: Paris.
  Gauthier-Villars. T. II. 1883.

\bibitem[Nel20]{NelsonReview2020}
Jelani Nelson, \emph{Dimensionality {Reduction} in {Euclidean} {Space}},
  Notices of the American Mathematical Society \textbf{67} (2020), no.~10, 1
  (en).

\bibitem[NY17]{NaorYoussefRestricted2017}
Assaf Naor and Pierre Youssef, \emph{Restricted invertibility revisited}, A
  journey through discrete mathematics. {A} tribute to {Jiří} {Matoušek},
  Springer, Cham, 2017, pp.~657--691 (English).

\bibitem[RV13]{RudelsonVershyninHW2013}
Mark Rudelson and Roman Vershynin, \emph{Hanson-{Wright} inequality and
  sub-gaussian concentration}, Electronic Communications in Probability
  \textbf{18} (2013), no.~none, 1--9, Publisher: Institute of Mathematical
  Statistics and Bernoulli Society.

\bibitem[Sar06]{SarlosImproved2006}
Tamas Sarlos, \emph{Improved {Approximation} {Algorithms} for {Large}
  {Matrices} via {Random} {Projections}}, 2006 47th {Annual} {IEEE} {Symposium}
  on {Foundations} of {Computer} {Science} ({FOCS}'06), October 2006, ISSN:
  0272-5428, pp.~143--152.

\bibitem[Vem04]{VempalaRandomProjection2004}
Santosh~S. Vempala, \emph{The random projection method}, {DIMACS} {Series} in
  {Discrete} {Mathematics} and {Theoretical} {Computer} {Science}, vol.~65,
  American Mathematical Society, Providence, RI, 2004. \MR{2073630}

\bibitem[Ver18]{VershyninHDP2018}
Roman Vershynin, \emph{High-dimensional probability. {An} introduction with
  applications in data science}, Camb. {Ser}. {Stat}. {Probab}. {Math}.,
  vol.~47, Cambridge University Press, Cambridge, 2018 (English).

\end{thebibliography}

\end{document}